\documentclass[11pt]{article}
\usepackage{amsfonts}
\usepackage{lipsum}
\usepackage{latexsym}
\usepackage{amsmath}
\usepackage{amssymb}
\usepackage{amsthm}
\usepackage{graphicx}
\usepackage{fullpage}
\usepackage{enumerate}
\usepackage{color}
\usepackage{bbm}
%\addbibresource{references.bib}
\newtheorem{theorem}{Theorem}[section]
\newtheorem{lemma}[theorem]{Lemma}

\newtheorem{proposition}[theorem]{Proposition}

\newtheorem{remark}[theorem]{Remark}

\newtheorem{problem}[theorem]{Problem}
\theoremstyle{definition}

 % first theorem in section 1 will be 1.1
\newtheorem{thmy}{Theorem}
 % "letter-numbered" theorems

\newtheorem*{note*}{Note}
\newcommand{\measurerestr}{%
  \,\raisebox{-.127ex}{\reflectbox{\rotatebox[origin=br]{-90}{$\lnot$}}}\,%
}

\newcommand{\R}{\mathbb R}

\newcommand{\Sn}{\mathbb{ S}^{n-1}}

\newcommand{\Ha}{{{\cal H}^{n-1}}}

\makeatother

\newcommand{\proofbox}{\mbox{ $\Box$}\\}
\newcommand{\HH}{\mathcal{H}}
\newcommand{\KK}{\mathcal{K}}
\newcommand{\KKn}{\mathcal{K}^n}

\newcommand\blfootnote[1]{%
  \begingroup
  \renewcommand\thefootnote{}\footnote{#1}%
  \addtocounter{footnote}{-1}%
  \endgroup
}
%\newcommand{\abs}[1]{\left\lvert#1\right\rvert}
%\newcommand{\norm}[1]{\left\lVert#1\right\rVert}
%\newcommand{\set}[1]{\left\lbrace#1\right\rbrace}
%\newcommand{\sgn}[1]{\signum(#1)}

%\renewbibmacro*{volume+number+eid}{%
%  \printfield{volume}%
%  \setunit*{\adddot}% DELETED
%  \setunit*{\addnbspace}% NEW (optional); there's also \addnbthinspace
%  \printfield{number}%
%  \setunit{\addcomma\space}%
%  \printfield{eid}}
%\DeclareFieldFormat[article]{number}{\mkbibparens{#1}}
%\AtEveryBibitem{\clearfield{number}}
%\textwidth6.6truein \oddsidemargin-.5truecm \evensidemargin-.5truecm

\begin{document}

%\small

\title{\bf Uniqueness when the $L_p$ curvature is close to be a constant for $p\in[0,1)$}
\date{}
\medskip
\author{K\'{a}roly~J. B\"{o}r\"{o}czky, Christos Saroglou}
\maketitle
\blfootnote{2020 Mathematics Subject Classification. Primary: 52A20; Secondary: 52A38, 52A39.}
\begin{abstract}
For fixed positive integer $n$, $p\in[0,1)$, $a\in(0,1)$, we prove that if a function $g:\Sn\to \R$ is sufficiently close to 1, in the $C^a$ sense, then there exists a unique convex body $K$ whose $L_p$ curvature function equals $g$. This was previously established for $n=3$, $p=0$ by Chen, Feng, Liu \cite{CFL22} and in the symmetric case by Chen, Huang, Li, Liu \cite{CHLL20}. Related, we show that if $p=0$ and $n=4$ or $n\leq 3$ and $p\in[0,1)$, and the $L_p$ curvature function $g$ of a (sufficiently regular, containing the origin) convex body $K$ satisfies $\lambda^{-1}\leq g\leq \lambda$, for some $\lambda>1$, then $\max_{x\in\Sn}h_K(x)\leq C(p,\lambda)$, for some constant $C(p,\lambda)>0$ that depends only on $p$ and $\lambda$. This also extends a result from Chen, Feng, Liu \cite{CFL22}. Along the way, we 
obtain a result, that might be of independent interest, concerning the question of when the support of the $L_p$ surface area measure is lower dimensional. Finally, we establish a strong non-uniqueness result for the $L_p$-Minkowksi problem, for $-n<p<0$.
\end{abstract}

\noindent DATA AVAILABILITY STATEMENT required by CVPD: The paper uses no data.

\section{Introduction}

We call a compact convex set in $\R^n$ with non-empty interior a convex body. The family of convex bodies in $\R^n$ is denoted by $\KKn$, and we write $\KKn_o$ ($\KKn_{(o)}$) to denote the subfamily of $K\in\KKn$ with $o\in K$ ($o\in{\rm int}\,K$). We also write $\KKn_s$ to denote the class of symmetric convex bodies in $\R^n$. The support function of a  compact convex set $K$ is 
$h_K(u)=\max_{x\in K}\langle u,x\rangle$ for $u\in \R^n$, and hence $h_K$ is convex and $1$-homogeneous where a function $\varphi:\R^n\to\R$ is $\alpha$-homogeneous for $\alpha\in\R$ if
$\varphi(\lambda x)=\lambda^\alpha \varphi(x)$ for $\lambda>0$ and $x\in\R^n\backslash \{o\}$. The Euclidean unit ball in $\R^n$ centered at the origin is denoted by $B^n$ and its boundary, the Euclidean sphere, is denoted by $\Sn$. We write $\|x\|$ for the Euclidean norm of a vector $x\in\R^n$. Throughout this paper, we fix an orthonormal basis $\{e_1,\dots,e_n\}$ of $\R^n$. We will say that an operator $T:\R^n\to\R^n$ is diagonal if it is diagonal with respect to this basis. 

Let $\partial'K$ denote the subset of the boundary  of a $K\in\KKn$ such that there exists a unique exterior unit normal vector
$\nu_K(x)$ at any point $x\in \partial'K$.
It is well-known that $\HH^{n-1}(\partial K\setminus\partial'K)=0$ and $\partial'K$ is a Borel set  (see Schneider \cite{Sch14}) where $\HH^k$ is the $k$-dimensional Hausdorff measure normalized in a way such that it coincides with the Lebesgue measure on $k$ dimensional affine subspaces. The function $\nu_K:\partial'K\to \Sn$ is the spherical Gauss map
that is continuous on $\partial'K$.
 The surface area measure $S_K$ of $K$ is a Borel measure on $\Sn$ 
satisfying that $S_K(\eta)=\HH^{n-1}(\nu_K^{-1}(\eta))$
 for any Borel set $\eta\subset \Sn$. In particular, if $K\in\KKn$ and $\psi:\R\to\R$ is a measurable function that is bounded on any interval, then
\begin{equation}
\label{SKintegral}
\int_{\Sn}\psi\circ h_K\,dS_K=\int_{\partial' K}\psi(\langle x,\nu_K(x)\rangle)\,d\HH^{n-1}(x).
\end{equation}

If $S_K$ is absolutely continuous for a $K\in\KKn$, then we write $f_K$, the so-called curvature function, to denote the Radon-Nykodyn derivative satifying $dS_K=f_K\,d\HH^{n-1}$ (see Hug \cite{Hug96}, Ludwig \cite{Lud10} and Schneider \cite{Sch14}). 
We observe that $S_K$ is absolutely continuous if  there exists a constant $c>0$ such that 
$S_K<c\HH^{n-1}$ (meaning that $S_K(\omega)\leq c\HH^{n-1}(\omega)$ for any Borel set $\omega\subset \Sn$). For example, if 
$\partial K$ is $C^2_+$, then $f_K(u)$ is the reciprocal of the Gaussian curvature at $x\in\partial K$ where $u$ is the exterior unit normal for $u\in \Sn$, and hence 
\begin{equation}
\label{fKMonge}
\det(\nabla^2 h+h\,{\rm Id})= f_K
\end{equation}
where $h=h_K|_{\Sn}$, and $\nabla h$ and  $\nabla^2 h$ are the gradient and the Hessian of $h$ with respect to a moving orthonormal frame. 

For $K\in \KKn_o$ and $p\in\R$, Lutwak's $L_p$ surface area $S_{p,K}$ is defined by $dS_{p,K}=h^{1-p}dS_K$
for $h=h_K|_{\Sn}$
(see Lutwak \cite{Lut93}, B\"or\"oczky \cite{Bor23}); in particular, $S_{1,K}=S_K$ and $\frac1n\,S_{0,K}$ is the so-called cone-volume measure.
More precisely,  if $p>1$, then we need to assume that $h^{1-p}$ is $S_K$ integrable; for example, $K\in\KKn_{(o)}$. The corresponding $L_p$ Minkowski problem asks for the solution of the Monge-Amp\`ere equation
\begin{equation}
\label{LpMonge}
h^{1-p}\det(\nabla^2 h+h\,{\rm Id})= g
\end{equation}
for a given non-negative measurable function $g$ on $\Sn$; or more generally, 
given a finite non-trivial measure $\mu$ on $\Sn$, asks for a $K\in \KKn_o$ satisfying
\begin{equation}
\label{LpMinkowskiAlexandrov}
S_{p,K}= \mu.
\end{equation}
For results about the $L_p$-Minkowski problem, 
see for example Chou,  Wang \cite{ChW06}, Chen, Li, Zhu \cite{CLZ17,CLZ19}, 
Bianchi, B\"or\"oczky, Colesanti, Yang \cite{BBCY19} and
Guang, Li, Wang \cite{LGWa}.

The measure $S_{0,K}$ (whose normalized version is the 
Cone Volume measure $V_K=\frac1n\,S_{0,K}$)
was introduced by Firey \cite{Fir74}, and has been a widely used tool since the paper
Gromov, Milman \cite{GromovMilman}, see for example Barthe, Gu\'{e}don, Mendelson, Naor \cite{BG}, Naor \cite{Nar07},  Paouris, Werner \cite{PaW12}.
The still open logarithmic Minkowski problem (\eqref{LpMonge} or \eqref{LpMinkowskiAlexandrov} with $p=0$)  was posed by Firey \cite{Fir74} in 1974, who showed that
if $g$ is a positive constant function, then \eqref{LpMonge} has a unique even solution coming from the suitable centered ball. 
For a positive constant function $g$, the general uniqueness result without the evenness condition is due to  Andrews \cite{And99} if $n=2,3$, and Brendle, Choi, Daskalopoulos \cite{BCD17} if $n\geq 4$ ({\it cf.} 
Andrews, Guan, Ni \cite{AGN16}), see also the arguments
in Saroglou \cite{Sar22} and Ivaki, Milman \cite{IvM23}. 
Concerning general $p$, Hug, Lutwak, Yang, Zhang \cite{HLYZ05} proved that \eqref{LpMinkowskiAlexandrov} has a unique solution if $p>1$, $p\neq n$ and $\mu$ is not concerntrated on any  closed hemisphere 
(see Lutwak \cite{Lut93a} for the case when the  $g$ in \eqref{LpMonge} is a positive contant),
and Brendle, Choi, Daskalopoulos \cite{BCD17} actually verified that if  $-n<p<1$ and $g$ is a positive constant function,
then corresponding ball is the unique solution of \eqref{LpMonge}, see also Saroglou \cite{Sar22}.
It is known that uniqueness of the solution may not hold if $g$ is not a constant function (see 
Chen, Li, Zhu \cite{CLZ17,CLZ19}
for $p\in[0,1)$). Still, it is a fundamental question whether \eqref{LpMonge} has a unique solution if $-n<p<1$
and $g$ is close to be a constant function.

\begin{theorem}[Chen, Feng, Liu \cite{CFL22}]
\label{CFL22}
For $0<\alpha<1$ and $n=3$, there exists a constant $\eta>0$ depending only on $\alpha$  such that if $g\in C^{\alpha}(\mathbb{S}^2)$ satisfies $\|g-1\|_{C^{\alpha}(\mathbb{S}^2)}<\eta$, then the equation
$$
dS_{0,K}=gd\HH^2
$$
has a unique solution in ${\cal K}^2_o$, which is actually a positive $C^{2,\alpha}$ solution of \eqref{LpMonge} with $p=0$.
\end{theorem}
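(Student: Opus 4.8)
The plan is to combine three ingredients: (i) a priori estimates forcing every solution $K\in\mathcal{K}^3_o$ with $\|g-1\|_{C^\alpha(\mathbb{S}^2)}$ small to be uniformly close to $B^3$; (ii) regularity theory for the Monge--Amp\`ere equation, to upgrade Hausdorff closeness to $C^{2,\alpha}$ closeness; and (iii) a linearization plus implicit function theorem argument, to obtain uniqueness in a $C^{2,\alpha}$-neighbourhood of $B^3$. Existence of at least one solution is not the main point: for $g$ close to $1$, hence in particular bounded away from $0$, the known solvability results for the logarithmic Minkowski problem apply, or one runs a degree/continuity argument along $g_t=(1-t)+tg$, $t\in[0,1]$, anchored at the ball $B^3$, which satisfies $dS_{0,B^3}=d\HH^2$.

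The a priori estimates are the heart of the matter. Taking $\psi=\mathrm{id}$ in \eqref{SKintegral} together with the divergence theorem gives $S_{0,K}(\mathbb{S}^2)=\int_{\mathbb{S}^2}g\,d\HH^2=3\,V(K)$, so $V(K)$ is pinned close to $V(B^3)=\tfrac{4\pi}{3}$. The substantive claim, and the main obstacle, is a quantitative non-degeneracy bound: there are $\delta_0>0$ and $C_0>1$ depending only on $\alpha$ so that $\|g-1\|_{C^\alpha}<\delta_0$ forces $C_0^{-1}B^3\subseteq K\subseteq C_0 B^3$ for \emph{every} solution $K\in\mathcal{K}^3_o$. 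Both halves rest on controlling the cone-volume measure $\tfrac13 S_{0,K}=\tfrac13 g\,\HH^2$: a family of solutions whose diameters blow up would, after normalisation, have cone-volume measures concentrating on a set $\mathbb{S}^2\cap L$ with $L\subsetneq\R^3$ a subspace, whereas a family in which $o$ approaches $\partial K$ would force $S_{0,K}=h_KS_K$ to develop zeros on the normal cone at the limiting boundary point --- either way contradicting $S_{0,K}=g\,\HH^2\to\HH^2$ as $g\to1$. I would carry this out by contradiction and compactness; this is the dimension-sensitive step. Granting it, Blaschke selection, the weak continuity of $K\mapsto S_{0,K}$ under Hausdorff convergence, and Andrews' uniqueness for constant data in dimension $3$ (the only $K\in\mathcal{K}^3_o$ with $dS_{0,K}=d\HH^2$ is $B^3$) give that every solution is Hausdorff-close to $B^3$ once $\|g-1\|_{C^\alpha}$ is small. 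Then $dS_K=(g/h_K)\,d\HH^2$ with $g/h_K$ between two positive constants, so Caffarelli's regularity theory makes $K$ of class $C^{1,\beta}_+$ for some $\beta>0$; hence $h_K$ is $C^1$, so $g/h_K\in C^{\alpha}$, and Caffarelli's $C^{2,\alpha}$ estimate for the Monge--Amp\`ere equation gives $h_K\in C^{2,\alpha}$. Combining with the Hausdorff closeness and interpolating, $\|h_K-1\|_{C^{2,\beta}}\to 0$ for each fixed $\beta<\alpha$ as $\|g-1\|_{C^\alpha}\to0$.

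For the uniqueness near $B^3$, regard $\Phi(h)=h\,\det(\nabla^2 h+h\,\mathrm{Id})$ as a map from a small $C^{2,\beta}(\mathbb{S}^2)$-neighbourhood of the constant $1$ into $C^{\beta}(\mathbb{S}^2)$; for such $h$ the matrix $\nabla^2 h+h\,\mathrm{Id}$ is close to $\mathrm{Id}>0$, so $h$ is automatically the support function of a convex body with $o$ in its interior, and $\Phi(h)=g$ is precisely \eqref{LpMonge} with $p=0$. Using that the differential of $\det$ at $\mathrm{Id}$ sends $M$ to $\mathrm{tr}\,M$, and $\mathrm{tr}(\nabla^2\phi+\phi\,\mathrm{Id})=\Delta_{\mathbb{S}^2}\phi+2\phi$, the derivative of $\Phi$ at $h\equiv1$ is
\[
D\Phi|_{h\equiv1}(\phi)=\Delta_{\mathbb{S}^2}\phi+3\,\phi .
\]
The eigenvalues of $-\Delta_{\mathbb{S}^2}$ are $k(k+1)$, $k=0,1,2,\dots$, namely $0,2,6,12,\dots$; since $3$ is not among them, $D\Phi|_{h\equiv1}\colon C^{2,\beta}(\mathbb{S}^2)\to C^{\beta}(\mathbb{S}^2)$ is a linear isomorphism. (For $p=1$ one would instead get $\Delta_{\mathbb{S}^2}\phi+2\phi$, whose kernel is the space of restrictions of linear functions, i.e.\ the translations; the point of $p<1$ is precisely that $n-p$ misses the spectrum.) The implicit function theorem then produces neighbourhoods $1\in U\subset C^{2,\beta}$ and $1\in W\subset C^{\beta}$ such that for each $g\in W$ there is a unique $h\in U$ with $\Phi(h)=g$. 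Choosing $\|g-1\|_{C^\alpha}$ small enough that $g\in W$ and, by the previous step, $h_K\in U$ for every solution $K\in\mathcal{K}^3_o$, we conclude that the solution is unique, equals the convex body whose support function is the $h$ furnished by the implicit function theorem, and is a positive $C^{2,\alpha}$ solution of \eqref{LpMonge} with $p=0$.

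The difficulty is concentrated entirely in the a priori non-degeneracy estimate $C_0^{-1}B^3\subseteq K\subseteq C_0 B^3$; everything else is either soft (compactness, weak continuity) or standard (Monge--Amp\`ere regularity, the implicit function theorem, the spectrum of $\Delta_{\mathbb{S}^2}$). Note that the linearisation $D\Phi=\Delta_{\mathbb{S}^{n-1}}+(n-p)\,\mathrm{Id}$ is an isomorphism in every dimension provided $n-p$ avoids $\{k(k+n-2):k\ge0\}$, which holds for all $0\le p<1$; so it is exactly the quantitative control of the cone-volume measure --- keeping solutions from becoming thin and keeping $o$ away from the boundary --- that is delicate, and it is through this step (and through Andrews' uniqueness for constant data) that the statement is currently tied to $n=3$.
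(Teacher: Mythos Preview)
Your overall architecture---a priori two-sided bounds on $h_K$, Caffarelli regularity to reach $C^{2,\alpha}$, then linearization at the ball and the inverse function theorem---is exactly the scheme the paper follows (for Theorem~\ref{thm1-bounded-density}, of which the stated result is the case $n=3$, $p=0$). The linearized operator $\Delta_{\Sn}+(n-p)$ and the spectral check, the regularity bootstrap, and the use of constant-data uniqueness to pin the Hausdorff limit at $B^n$ all match; the paper carries out the inverse function theorem in $H^2\to L^2$ rather than H\"older spaces, but that is immaterial.

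The discrepancy is in the a priori diameter bound, and there your sketch has a genuine gap. After John-normalisation $K_m=T_m^{-1}L_m\to K_\infty$, the measure that concentrates on a proper subspace $H$ is $S_{0,K_\infty}$, \emph{not} the original $S_{0,L_m}=g_m\HH^{n-1}$; the latter converges to $\HH^{n-1}$ and never concentrates, so ``concentration contradicts $S_{0,K}=g\HH^2\to\HH^2$'' is not a contradiction as written---mere concentration of $S_{0,K_\infty}$ on a lower-dimensional sphere is not, by itself, impossible. The paper closes this gap by exploiting $g_m\to 1$ (not just $\lambda^{-1}\le g_m\le\lambda$) through a change-of-variables computation to show that $S_{0,K_\infty}\measurerestr(\Sn\cap H)$ is invariant under every rotation fixing $H^\perp$, hence a constant multiple of spherical Lebesgue measure on $\Sn\cap H$; this is then ruled out by the non-existence result \cite[Theorem~1.3]{Sar}. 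That argument (Proposition~\ref{prop1-bounded-density}) is dimension-free, so your closing remark that the a priori step is what ties the theorem to $n=3$ does not apply to the paper's route; the dimension restriction belonged to the original Chen--Feng--Liu proof, which went through the full Problem~\ref{problem 1} for $n=3$, $p=0$, whereas the present paper only needs the weaker near-constant version valid for all $n$.
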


Our main result extends the theorem above for any dimension $n$ and for any $p\in[0,1)$.

\begin{theorem}\label{thm1-bounded-density}
For $n\geq 2$, $\alpha\in(0,1)$ and $p\in[0,1)$, there exists a constant $\eta>0$, that depends only on $n$, $\alpha$, $p$, such that if $g\in C^{\alpha}(\Sn)$ satisfies $\|g-1\|_{C^{\alpha}(\Sn)}<\eta$, then the equation
$$
dS_{p,K}=gd\Ha
$$
has a unique solution in ${\cal K}^n_o$, which is actually a positive $C^{2,\alpha}$ solution of \eqref{LpMonge}.
\end{theorem}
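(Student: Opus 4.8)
The plan is to combine a quantitative implicit function theorem around the unit ball with global a priori estimates, the latter being where the real work lies.

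First I would set up the local picture. Consider the nonlinear operator $\Phi(h)=h^{1-p}\det(\nabla^2 h+h\,{\rm Id})$ on the open subset of $C^{2,\alpha}(\Sn)$ consisting of positive $h$ with $\nabla^2 h+h\,{\rm Id}>0$; these are exactly the restrictions to $\Sn$ of support functions of convex bodies with $C^2_+$ boundary, so \eqref{LpMonge} becomes $\Phi(h)=g$. One has $\Phi(1)=1$, and the Fréchet derivative at $h\equiv 1$ is $Lu=\Delta_{\Sn}u+(n-p)u$. Since the eigenvalues of $-\Delta_{\Sn}$ are $k(k+n-2)$ for $k\geq 0$, and for $n\geq 2$, $p\in[0,1)$ the number $n-p\in(n-1,n]$ lies strictly between the eigenvalues $n-1$ and $2n$, the operator $L\colon C^{2,\alpha}(\Sn)\to C^{\alpha}(\Sn)$ is an isomorphism; note that the value $n-1$, which corresponds to translations, is avoided precisely because $p<1$. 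The implicit function theorem then gives $r,\eta_0>0$ so that for $\|g-1\|_{C^{\alpha}}<\eta_0$ there is a $C^{2,\alpha}$ solution $h_g$ of \eqref{LpMonge} with $\|h_g-1\|_{C^{2,\alpha}}<r$, unique among such, and $h_g$ is the support function of a body $K_g\in\KKn_{(o)}$. This settles existence of a regular solution and local uniqueness; what remains is to show that every solution in $\KKn_o$ coincides with $K_g$.

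The core of the argument is thus the a priori estimate: if $K_j\in\KKn_o$ solve $dS_{p,K_j}=g_j\,d\Ha$ with $\|g_j-1\|_{C^{\alpha}}\to 0$, then $K_j\to B^n$ in $C^{2,\alpha}$. I expect the main obstacle to be the upper bound $\max_{\Sn}h_{K_j}\le C(n,p)$. I would argue by contradiction: if $R_j:=\max h_{K_j}\to\infty$, then since $o\in K_j$, convexity forces $K_j$ (after passing to a subsequence and suitably rescaling) to collapse toward a lower-dimensional convex set, and for such a degenerating family the $L_p$ surface area measure asymptotically concentrates on a proper great subsphere of $\Sn$ --- this is exactly the phenomenon analyzed in the paper's result on when the support of the $L_p$ surface area measure is lower dimensional. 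That contradicts $S_{p,K_j}=g_j\,d\Ha\geq(1-\eta)\,d\Ha$, which distributes comparable mass over all of $\Sn$. For $p=0$ the identity $n\,V(K_j)=\int_{\Sn}g_j\,d\Ha\to n\omega_n$ keeps the volume bounded, which makes the degeneration analysis transparent; for $p\in(0,1)$ one must instead combine $n\,V(K)=\int_{\Sn}h_K^{p}g\,d\Ha$ with the isoperimetric inequality and the integrability of $h_K^{p-1}$, and I expect making this dichotomy quantitative (including ruling out $o\in\partial K$, where $h_K$ vanishes) to be the delicate point, and this is presumably what the auxiliary lower-dimensional-support result is for. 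Granting $\max h_{K_j}\le C$, the relation $dS_{K_j}=h_{K_j}^{p-1}g_j\,d\Ha\ge c\,d\Ha$ yields a uniform inner ball, hence $c_1\le h_{K_j}\le c_2$ with $c_i$ controlled; by Blaschke selection a subsequence converges to some $K_\infty$ with $o\in K_\infty$, the same lower-dimensional-support result forbids $K_\infty$ from being degenerate while $S_{p,K_j}(\Sn)\to n\omega_n>0$ forbids a point, so $S_{p,K_\infty}=d\Ha$; by uniqueness of the ball for constant data when $-n<p<1$ (Brendle--Choi--Daskalopoulos \cite{BCD17}, Saroglou \cite{Sar22}), $K_\infty=B^n$.

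Finally, once $c_1\le h_{K_j}\le c_2$, equation \eqref{LpMonge} is uniformly elliptic with $C^{\alpha}$ right-hand side bounded away from $0$ and $\infty$; the regularity theory for the Monge--Amp\`ere equation (cf. Chou--Wang \cite{ChW06}) gives strict convexity, then $C^{1,\beta}$, then a uniform $C^{2,\alpha}$ bound $\|h_{K_j}\|_{C^{2,\alpha}}\le C$. Together with $K_j\to B^n$ in the Hausdorff sense and interpolation, this forces $h_{K_j}\to 1$ in $C^{2,\alpha}$. Hence for $j$ large $h_{K_j}$ lies in the $C^{2,\alpha}$-ball of radius $r$ about $1$, where the implicit function theorem already guarantees a unique solution; therefore any two solutions in $\KKn_o$ agree with $h_{g_j}$, which proves the theorem (and simultaneously shows the solution is the positive $C^{2,\alpha}$ solution $h_{g}$).
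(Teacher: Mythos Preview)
Your overall architecture matches the paper's: inverse function theorem around $h\equiv 1$ (the paper linearizes $H^2\to L^2$ rather than $C^{2,\alpha}\to C^{\alpha}$, but the operator $\Delta_{\Sn}+(n-p)$ and its invertibility are the same), then a global a priori bound, then Caffarelli regularity and Arzel\`a--Ascoli to pull every solution into the local uniqueness neighborhood. The gap is in the a priori upper bound, which is the substance of the paper's Proposition~\ref{prop1-bounded-density}.

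Your contradiction argument for $\max h_{K_j}\to\infty$ does not work as written. You say that after rescaling the family ``collapses toward a lower-dimensional convex set'' and its $L_p$ surface area measure ``asymptotically concentrates on a proper great subsphere,'' and that this contradicts $S_{p,K_j}=g_j\,d\Ha\geq(1-\eta)\,d\Ha$. But the concentration happens for the \emph{rescaled} limit body, not for the $K_j$ themselves, so there is no contradiction with the density of $S_{p,K_j}$; indeed the very hypothesis is that the $K_j$ are degenerating while $S_{p,K_j}$ stays uniformly spread. What the paper actually does is: put each $K_j$ in John position via a diagonal $T_m^{-1}$, pass to a full-dimensional limit $K_\infty$, and prove (Lemma~\ref{lemma-concentration}, via the nontrivial Lemma~\ref{lemma-max|T_m^tx|^p} relating $\det T_m$ to $\max_{\Sn}\|T_mx\|^p$) that $S_{p,K_\infty}$ is supported on a proper subspace $H$. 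The crucial further step, which uses precisely that $g_j\to 1$, is a change-of-variables and rotational-invariance computation (Lemmas~\ref{lemma-change of variables-1}, \ref{lemma-change-of-variables-3} and the argument around \eqref{eq-thm1-bounded-density-4}) showing that the restriction of $S_{p,K_\infty}$ to $\Sn\cap H$ is a \emph{Haar} measure. The contradiction then comes from an external non-existence theorem \cite[Theorem~1.3]{Sar}, not from the paper's Theorem~\ref{inf0orpos} (that result is used for Theorem~\ref{thm2-bounded-density}, the $\lambda$-bounded-density problem, not here). None of this mechanism is present in your sketch.

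A smaller issue: your lower bound argument ``$dS_{K_j}=h_{K_j}^{p-1}g_j\,d\Ha\ge c\,d\Ha$ yields a uniform inner ball, hence $c_1\le h_{K_j}$'' doesn't give $h_{K_j}\ge c_1$, since an inner ball need not be centered at $o$; the paper handles this separately (Lemma~\ref{lemma-o in int K}) by a compactness argument using the uniqueness of $B^n$ for constant data.
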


Theorem \ref{thm1-bounded-density} was previously established in \cite{CHLL20} in the symmetric case. The general case, however, appears to be much more challenging.  Ivaki \cite{Iva22} considered the stability of the solution of $L_p$ Monge-Amp\`ere equation \eqref{LpMonge} around 
the constant $1$ function from another point of view. He proved under various conditions depending on $p$ that if $p>-n$
and $g$ is close to the constant $1$ function, then any solution of \eqref{LpMonge} is close to the constant $1$ function.
 In addition, the recent preprint Hu, Ivaki \cite{HuIb} proves a stronger stability result when $p=0$, which in turn yields a second proof of our  Theorem~\ref{thm1-bounded-density}.

Chen, Feng, Liu \cite{CFL22} posed the following related problem:

\begin{problem}[Chen, Feng, Liu \cite{CFL22}]\label{problem 1}
For $n\geq 2$, $p\in[0,1)$ and $\lambda>1$, does there exist a constant $C=C(\lambda,n,p)>1$ such that if $L\in {\cal K}^n_{(o)}$ has absolutely continuous surface area measure and satisfy 
\begin{equation}
\label{eq-main}
\lambda^{-1}\leq h_L^{1-p} f_L\leq \lambda    \qquad \textnormal{on }\Sn,
\end{equation}
then $\sup_{x\in\Sn}h_L(x)\leq C$ and $V(L)\geq C^{-1}$.
\end{problem}

In Section~2 of the paper Jian, Lu, Wang \cite{JLW15}, the authors construct a series of convex bodies showing that 
no suitable constant $C=C(\lambda,n,p)$ exists in Problem~1 if $-n<p<-1$.
We note that Problem~\ref{problem 1} is posed in \cite{CFL22} for $p=0$ (when
$V(L)\geq C^{-1}$ trivially holds). The paper \cite{CFL22} proved that the answer to Problem~\ref{problem 1} is positive for $p=0$ and $n=3$, which statement is one of the main ingredients of the proof of Theorem~\ref{CFL22} in \cite{CFL22}.
Estimates analogues to the one in Problem~\ref{problem 1} are crucial in the study of Monge-Amp\`ere equations, a very similar estimate is obtained for example by 
Huang, Lu \cite{HuL13}, who considered the case when the $C^1$ norm of $h_L^{1-p} f_L$ is bounded.

If $p\in(-1,0)$ and $n=2$, then Du \cite{Du21} proves an analogue of Proplem~\ref{problem 1} in terms of the $C^\alpha$ norm. 
Using the argument completing the proof of Theorem~\ref{thm1-bounded-density} at the end of Section~\ref{secMain},
Du's result in \cite{Du21} leads to the analogue of Theorem~\ref{thm1-bounded-density} for $n=2$,  $\alpha\in(0,1)$ and $p\in(-1,0)$.

In this paper, we clarify Problem~\ref{problem 1} if $p=0$ and $n=4$, and $0<p<1$ and $n=3$.

\begin{theorem}\label{thm2-bounded-density}
The answer to Problem \ref{problem 1} is affirmative if $n\leq 3$ and $p\in[0,1)$ or $n=4$ and $p=0$.    
\end{theorem}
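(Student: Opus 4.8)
The plan is to prove Theorem~\ref{thm2-bounded-density} by establishing the a priori bound $\sup_{x\in\Sn}h_L(x)\leq C(\lambda,n,p)$ through a contradiction/compactness argument, after which the lower bound $V(L)\geq C^{-1}$ follows from a quantitative use of the hypothesis \eqref{eq-main}. Suppose no such constant exists: then there is a sequence $L_k\in\KKn_{(o)}$ with absolutely continuous surface area measures satisfying $\lambda^{-1}\leq h_{L_k}^{1-p}f_{L_k}\leq\lambda$ but with $R_k:=\sup h_{L_k}\to\infty$. I would normalize by considering $\widetilde L_k := R_k^{-1}L_k$, so $\sup h_{\widetilde L_k}=1$ and, by Blaschke selection, a subsequence converges in Hausdorff distance to a compact convex set $L_\infty$ with $\sup h_{L_\infty}=1$; the crucial point is that $L_\infty$ must be \emph{lower dimensional} (i.e. contained in a hyperplane through $o$), because if $L_\infty$ had interior the bodies $L_k$ would already be uniformly bounded, contradicting $R_k\to\infty$ (here one uses that $o\in\inter L_k$ together with the density bound to rule out $L_k$ thinning to a segment pointing to infinity, or more carefully tracks how the bounds scale).

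The heart of the argument is then to show that the density bound \eqref{eq-main} is incompatible with the limit being lower dimensional. Under the scaling $L_k\mapsto \widetilde L_k = R_k^{-1}L_k$ one has $h_{\widetilde L_k}=R_k^{-1}h_{L_k}$ and $S_{\widetilde L_k}=R_k^{-(n-1)}S_{L_k}$, so the rescaled $L_p$ surface area measures $S_{p,\widetilde L_k}$ satisfy $\lambda^{-1}R_k^{-(n-p)}\,d\Ha \le dS_{p,\widetilde L_k}\le \lambda R_k^{-(n-p)}\,d\Ha$ — but this direct computation merely says the measures go to zero, so instead I would weak-$*$ converge $S_{p,L_k}$ (unnormalized, or normalized by total mass) and study the support of the limit. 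This is exactly where the paper's advertised result on \emph{when the support of the $L_p$ surface area measure is lower dimensional} enters: one shows the limiting measure $\mu_\infty$ is supported on the great subsphere $\Sn\cap L_\infty^\perp$ (the normals to the degenerate limit body), and then a subspace-concentration or dimension-count obstruction — valid precisely in the regime $p=0,\ n\le 4$ and $0<p<1,\ n\le 3$ — forbids a measure satisfying the two-sided density bound from concentrating on such a low-dimensional set. The dimensional restrictions in the statement should appear here as the range where the relevant integral $\int_{\Sn}(\text{dist to subsphere})^{\text{something}}\,d\Ha$ converges or diverges appropriately, mirroring the curvature-flow dimension thresholds of Brendle--Choi--Daskalopoulos that already appear for the constant-$g$ uniqueness.

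After the upper bound $h_L\le C$ is secured, the lower volume bound is comparatively soft: integrating \eqref{eq-main} against $h_L$ and using \eqref{SKintegral} gives $nV(L)=\int_{\Sn}h_L\,dS_L=\int_{\Sn}h_L^p\,(h_L^{1-p}f_L)\,d\Ha$, which is bounded below in terms of $\lambda$ once $h_L$ is bounded above (for $p\geq 0$, $h_L^p\le C^p$ controls the integrand from above, and a matching lower bound on $\int h_L\,dS_L$ comes from $\lambda^{-1}\le h_L^{1-p}f_L$ together with a lower bound on $\int h_L^p\,d\Ha$, which in turn needs the in-radius of $L$ bounded below — this itself follows from $o\in\inter L$, the density lower bound, and the now-established diameter bound by a standard John-position/ball-comparison argument). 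For $p=0$ the authors already note $V(L)\ge C^{-1}$ is trivial given the support bound, since $S_{0,L}$ is the cone-volume measure with total mass $nV(L)$ and $\int_{\Sn}d S_{0,L}=\int h_L f_L\,d\Ha\ge \lambda^{-1}\Ha(\Sn)$.

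\textbf{Main obstacle.} I expect the genuinely hard step to be the rigidity claim that a two-sided density bound $\lambda^{-1}\le h^{1-p}f\le\lambda$ cannot persist in a degenerating sequence whose Hausdorff limit is flat — equivalently, making precise the ``support of $S_{p,K}$ is lower dimensional'' dichotomy and extracting from it the sharp dimension thresholds $n\le 4$ (for $p=0$) and $n\le 3$ (for $p\in(0,1)$). This is where the proof must diverge from the soft compactness skeleton above and use real geometric measure theory / the structure of solutions to the degenerate Monge--Ampère equation \eqref{LpMonge}; the restriction of the theorem to these small dimensions strongly suggests the estimate is delicate and fails (or is unknown) in higher dimensions, matching the phenomenon that uniqueness for constant $g$ via curvature flows also has a dimensional story.
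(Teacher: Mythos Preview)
Your high-level architecture --- contradiction, compactness, show the limiting $L_p$ surface area measure concentrates on a proper subsphere, then invoke Theorem~\ref{inf0orpos}(i) to get a contradiction exactly in the range $n-2m-p\ge 0$ (which for the worst case $m=n-2$ gives $n+p\le 4$, i.e.\ the stated thresholds) --- matches the paper. But two concrete steps in your plan would fail as written. First, the dilation $\widetilde L_k=R_k^{-1}L_k$ is the wrong normalization (as you yourself notice, the rescaled measures simply go to zero). The paper instead uses a John-position normalization: write $L_m=T_mK_m$ with $K_m$ in John position, so a subsequence of $K_m$ converges to a \emph{full-dimensional} body $K_\infty$; the degeneration lives in the eigenvalue ratios of the diagonal $T_m$, and Lemma~\ref{lemma-concentration} then shows $S_{p,K_\infty}$ is concentrated on the subspace $H={\rm span}\{e_k,\dots,e_n\}$ picked out by which ratios vanish. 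Your picture of the support as $\Sn\cap L_\infty^\perp$, normals to a collapsed limit body, is therefore not the right one.

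Second, and this is the step you underestimate, to contradict Theorem~\ref{inf0orpos}(i) it is not enough to know ${\rm supp}\,S_{p,K_\infty}\subset\Sn\cap H$; you must show the restriction to $\Sn\cap H$ is absolutely continuous with density \emph{bounded and bounded away from zero}. The paper extracts this via a separate measure-theoretic lemma (Lemma~\ref{lemma-measure}): it suffices to prove $S_{p,K_\infty}(V_1)\le A\,S_{p,K_\infty}({\rm cl}\,V_2)$ for any two equal-radius caps in $\Sn$ centered in $H$, and this cap-comparison is obtained by pushing the two-sided bound $\lambda^{-1}\le h_{L_m}^{1-p}f_{L_m}\le\lambda$ through the change-of-variables formulas (Lemmas~\ref{lemma-change of variables-1} and~\ref{lemma-change-of-variables-3}) and using that rotations fixing $H^\perp$ distort $\|T_m^{-1}x\|$ only by a bounded factor. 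This is the real technical content beyond the compactness skeleton, and nothing in your outline produces it. Your volume lower-bound argument via H\"older is correct and is essentially the paper's Lemma~\ref{volumeLower}.
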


%The proof of Theorem~\ref{thm2-bounded-density} depends on Theorem~\ref{inf0orpos} below and on
%Proposition~\ref{prop-bounded-density-below}, the latter stating that
%if the answer to Problem~\ref{problem 1} is negative, then there exists $K\in{\cal K}_o^n$ with $o\in\partial K$ such that $S_{p,K}$ is concentrated in a proper subspace $H$ of $\R^n$ and the restriction of $S_{p,K}$ to $H\cap \Sn$ has a density function, which
% is bounded and bounded away from zero. 
As mentioned, Theorem \ref{CFL22} above relies on the solution to Problem \ref{problem 1} for $n=3$, $p=0$. Although we are unable to clarify Problem \ref{problem 1} in full generality, we manage to prove (see Proposition \ref{prop1-bounded-density} below) that the answer is indeed affirmative if $\lambda$ is chosen to be sufficiently close to 1. This weaker statement turns out to be sufficient for the proof of our Theorem \ref{thm1-bounded-density}. Both Proposition \ref{prop1-bounded-density} and Theorem \ref{thm2-bounded-density} are based on the same general idea: If they are not true, then there exists a convex body $K\in{\cal K}_o^n$, such that ${\rm supp}\,S_{p,K}$ is contained in a proper subspace $L$ of $\R^n$ and the restriction $S_{p,K}\measurerestr{(\Sn\cap L)}$ of $S_{p,K}$ to the class of Borel sets in $\Sn\cap L$ is a Haar measure (resp. is absolutely continuous with respect to the spherical Lebesgue measure in $L$ and its density $\varphi$ is bounded away from 0). In the first case, such body does not exist, due to  \cite[Theorem 1.3]{Sar}. In the second case, we have to deal with the problem of finding conditions for a convex body $K\in{\cal K}_o^n$ to satisfy ${\rm supp}\,S_{p,K}\subset L$, for a proper subspace $L$, and $\varphi$ (as defined previously) to exist and satisfy ${\rm ess\,inf}\,\varphi=0$. We prove that the latter is true if $n,\ p$ are as in Theorem 
\ref{thm2-bounded-density} and that this result is actually optimal.
For a measurable function $\varphi:\R^n\to [0,\infty)$,  we set
$$
{\rm ess\,inf}\,\varphi:=\max_{\omega\subset\Sn,\;\Ha(\omega)=0}\inf \varphi\left|_{\Sn\backslash\omega}\right. .
$$

\begin{theorem}
\label{inf0orpos}
Let $p<1$, let $L\subset \R^n$ be an $(m+1)$-dimensional linear subspace, $0\leq m\leq n-2$, and let $K\in \KKn_o$ satisfy that
${\rm supp}\,S_{p,K}\subset L$, and $d\left(S_{p,K}\measurerestr{(\Sn\cap L)}\right)= \varphi\,d\HH^m$
for a measurable $\varphi:\Sn\cap L\to [0,\infty)$.
\begin{description}
\item{(i)} If $n-2m-p\geq 0$, then ${\rm ess\,inf}\,\varphi=0$;
\item{(ii)} If $n-2m-p<0$, then one may choose $K$ in a way such that $\varphi$ is a positive continuous function.
\end{description}
\end{theorem}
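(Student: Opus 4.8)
Suppose $K \in \mathcal{K}^n_o$ has $\mathrm{supp}\, S_{p,K} \subset L$ where $\dim L = m+1$, and the restricted measure has density $\varphi$ with respect to $\mathcal{H}^m$ on $\Sn \cap L$. The first thing I would do is translate the hypothesis $\mathrm{supp}\, S_{p,K} \subset L$ into a structural statement about $K$. Since $S_K$ (up to the factor $h_K^{p-1}$, which is positive and bounded on $\mathrm{supp}\, S_{p,K}$ away from where $h_K$ vanishes) is supported on $\Sn \cap L$, the body $K$ must be a "cylinder-like" object: writing $\R^n = L \oplus L^\perp$ with $L^\perp$ of dimension $n-1-m$, the body $K$ should be (a translate of) $K_0 \times C$ where $K_0 \subset L$ is an $(m+1)$-dimensional convex body and... no, more carefully, the surface area measure being concentrated on $\Sn \cap L$ forces $K$ to be unbounded in the $L^\perp$ directions unless $m = n-1$, which contradicts $K$ being a body. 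So the correct reading is that $K$ is a convex body whose boundary has all its outer normals in $L$ only on a set of full measure — equivalently $K$ is a generalized cylinder, but since $K$ is bounded this can't literally happen; instead one works with the relation (\ref{SKintegral}) directly. Let me restart the reduction: I expect the cleanest route is to avoid decomposing $K$ and instead exploit that for any convex body, $\int_{\Sn} h_K \, dS_K = n V(K) < \infty$, and more relevantly the "subspace concentration" type identities. Since the $L_p$ surface area measure being supported on $\Sn \cap L$ with $0 \in K$, one has $h_K > 0$ on $\mathrm{supp}\, S_{p,K}$ would give $V(K) = \tfrac1n \int h_K^p \, dS_{p,K}$-type formulas, but $V(K) > 0$ requires $h_K$ to be positive somewhere on the support; degeneracies where $h_K \to 0$ are exactly where ess inf $\varphi = 0$ can be forced.

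**Part (i): forcing $\mathrm{ess\,inf}\,\varphi = 0$.** The heart of the argument is an integrability obstruction. I would argue by contradiction: assume $\mathrm{ess\,inf}\, \varphi = c > 0$, so $\varphi \geq c$ $\mathcal{H}^m$-a.e. on $\Sn \cap L$. Then $S_{p,K} \geq c \, \mathcal{H}^m|_{\Sn \cap L}$, which means $h_K^{1-p} \, dS_K \geq c\, d\mathcal{H}^m|_{\Sn \cap L}$ as measures on $\Sn \cap L$. Now I use (\ref{SKintegral}) with a well-chosen test function. The natural choice is $\psi(t) = t^{-\beta}$ for suitable $\beta > 0$ (near the set where $h_K$ is small), converting $\int \psi \circ h_K \, dS_K$ into $\int_{\partial' K} \langle x, \nu_K(x)\rangle^{-\beta} \, d\mathcal{H}^{n-1}(x)$. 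The left side, by the lower bound, dominates $c \int_{\Sn \cap L} h_K^{p-1} \psi(h_K) \, d\mathcal{H}^m = c\int_{\Sn\cap L} h_K^{p-1-\beta}\, d\mathcal{H}^m$. The geometry: because $K$ is bounded and $\mathrm{supp}\, S_{p,K} \subset L$, near the "edges" of $K$ in the $L$-directions the support function $h_K$ on $\Sn \cap L$ behaves like $\mathrm{dist}$ to a lower-dimensional face; a dimension count on $\Sn \cap L$ (which is $(m-?)$-dimensional... $\Sn \cap L \cong \mathbb{S}^m$) versus the codimension in $\partial' K$ produces the exponent condition $n - 2m - p \geq 0$ as precisely the borderline where the integral $\int_{\Sn \cap L} h_K^{p-1-\beta} d\mathcal{H}^m$ diverges while $\int_{\partial' K} \langle x,\nu\rangle^{-\beta} d\mathcal{H}^{n-1}$ stays finite, a contradiction. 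The bookkeeping of these exponents — matching the local behavior of $h_K$ near degenerate directions against the $(n-1)$-dimensional Hausdorff measure on the part of $\partial K$ with normals near $\Sn \cap L$ — is the step I expect to be the main obstacle, and it is where the exact cutoff $n - 2m - p = 0$ must drop out.

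**Part (ii): constructing examples when $n - 2m - p < 0$.** Here I would build $K$ explicitly. Take $L_0 = \mathrm{span}\{e_1,\dots,e_{m+1}\}$ and let $B_0$ be the unit ball in $L_0$. Consider bodies of revolution of the form $K = \{(y,z) \in L_0 \times L_0^\perp : \|z\| \leq \rho(\|y\|)\}$ for a concave profile $\rho: [0,1] \to [0,\infty)$ with $\rho(1) = 0$, $\rho(0) > 0$ (a "spindle"/"lens" whose equatorial set is $B_0$). For such $K$ the outer normals along the equator $\partial B_0 \subset \Sn \cap L$... one checks that $S_{p,K}$ restricted to $\Sn \cap L$ picks up a density governed by the behavior of $\rho$ near $\|y\| = 1$, and by tuning $\rho(r) \sim (1-r)^{\gamma}$ with the right $\gamma$ one makes $\varphi$ positive and continuous exactly when $n - 2m - p < 0$. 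The computation reduces to: the portion of surface area "collapsing" onto a direction $u \in \Sn \cap L$ scales, under the revolution symmetry, with a power of the profile that is integrable-to-a-positive-density iff the stated inequality holds. I would verify $K \in \mathcal{K}^n_o$ (bounded, $0$ interior), compute $h_K$ on $\Sn \cap L$ (it is just $h_{B_0}$ there, hence smooth and positive), and confirm $\mathrm{supp}\, S_{p,K} \cap (\Sn \setminus L)$ is handled — actually one wants $\mathrm{supp}\, S_{p,K} \subset L$, so the construction must be degenerate enough that no normals point off $L$ with positive measure, which pushes toward $\rho$ being, e.g., piecewise built so that $\partial K$ consists of ruled/cylindrical pieces over $\partial B_0$; I would likely take $K$ to be the convex hull of $B_0$ (in $L_0$) and a small ball, or more simply a double cone, and check the exponent. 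The construction itself is routine once the right profile exponent is identified; pinning that exponent is where Part (i)'s dimension count gets reused in reverse.
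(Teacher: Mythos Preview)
Your proposal has a structural gap that undermines both parts. The key geometric fact you do not identify is that if $\mathrm{supp}\,S_{p,K}\subset L$ for a proper subspace $L$, then necessarily $o\in\partial K$. Indeed, since $\mathrm{supp}\,S_K$ cannot lie in $L$ (Minkowski's theorem), there are directions $v\in(\mathrm{supp}\,S_K)\setminus L$, and the condition $\mathrm{supp}\,S_{p,K}\subset L$ forces $h_K(v)=0$ at every such $v$. The paper exploits this to decompose $K=C\cap(M+L^\bot)$ where $C$ is a convex cone with apex $o$ and $M=K|L$ has $o\in\partial M$ (Lemma~\ref{structure}). You circle around this in your ``restart'' paragraph but never land on it, and without it neither of your arguments can be completed.

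For Part~(i), your integrability obstruction does not close. To make the integral $\int_{\Sn\cap L}h_K^{-\alpha}\,d\HH^m$ diverge you need $\alpha\geq m$ (since $h_K|_{\Sn\cap L}=h_M$ is Lipschitz and vanishes only to first order at the point where $o\in\partial M$ is seen). But on $\partial'K$, the cone part of the boundary has $\langle x,\nu_K(x)\rangle=0$ on a set of positive $\HH^{n-1}$ measure, so $\int_{\partial'K}\langle x,\nu_K(x)\rangle^{1-p-\alpha}\,d\HH^{n-1}=+\infty$ whenever $1-p-\alpha<0$, i.e.\ whenever $\alpha>1-p$. Since $1-p<1\leq m$, there is no $\alpha$ for which one side diverges and the other is finite, and no contradiction arises. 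The paper instead works locally: it builds explicit shrinking caps $\omega_r\subset\Sn\cap L$ around the outward normal to $M$ at $o$, and uses the cone structure (Lemma~\ref{coneSection}) together with the formula $S_{p,K}(\omega)=\int_{\nu_M^{-1}(\omega)}\langle x,\nu_M(x)\rangle^{1-p}\HH^{n-m-1}(C\cap(x+L^\bot))\,d\HH^m(x)$ to show directly that $S_{p,K}(\omega_r)/\HH^m(\omega_r)\to 0$. The exponent $n-2m-p$ emerges from balancing the $r^{1-p}$ contribution of $\langle x,\nu_M(x)\rangle^{1-p}$, the $r^{n-m-1}$ contribution of the cone section, and the $r^{-m}$ coming from a Mahler-type lower bound on $\HH^m(\omega_r)$.

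For Part~(ii), every example you propose (spindle with profile $\rho$, convex hull of $B_0$ and a small ball, double cone) has $o\in\mathrm{int}\,K$, which is impossible by the structural fact above. The paper's construction places $o$ on $\partial M$ with a prescribed local shape $z\mapsto z-\|z\|^q u$ for the specific exponent $q=\frac{2m}{2m+p-n}$, and then takes $K=C\cap(M+L^\bot)$ for a round cone $C$; the choice of $q$ is exactly what makes $\varphi$ extend continuously and positively to the degenerate direction~$u$.
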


For negative $p$, we do not have uniqueness results in the spirit of Theorem \ref{thm1-bounded-density}. Instead, employing a simple variational argument, we establish a strong non-uniqueness result which is valid for both the symmetric and the non-symmetric case.
Set ${\cal C}_+^\infty$ to be the set of convex bodies whose boundary is $C^\infty$ with strictly positive curvature, and hence their support functions are $C^\infty$ on $\R^n\backslash\{o\}$. The following theorem extends results from \cite{JLW15} and \cite{Mila}.

\begin{theorem}\label{thm-non-uniqueness}
Let ${\cal K}={\cal K}_o^n$ or ${\cal K}={\cal K}_s^n$, and let
${\cal K}_+:={\cal K}\cap {\cal C}_+^\infty$ and $p\in(-n,0)$. Then the set 
$${\cal A}_p:=\{K\in{\cal K}_+:o\in{\rm int}\,K\textnormal{ and }\forall q\in(-n,p],\ \exists L_q\in{\cal K}, \textnormal{ such that }L_q\neq K\textnormal{ and }S_{q,K}=S_{q,L_q}\}$$ is dense in ${\cal K}$, in the Hausdorff metric.
\end{theorem}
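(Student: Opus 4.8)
The plan is to produce non-uniqueness at a single body and then spread it around by density and perturbation. The key is a variational observation: for $q\in(-n,0)$, the functional $K\mapsto \widetilde V_{-q}(K)$ (or an appropriate $L_q$-affine-type functional) attains its maximum over bodies with a prescribed $L_q$ surface area measure, and when this maximum is attained at an interior critical point it need not be the unique maximizer. Concretely, I would fix a body $K_0\in{\cal C}_+^\infty$ with $o\in\inter K_0$ and consider, for a given $q$, the measure $\mu=S_{q,K_0}$. The $L_q$-Minkowski problem $S_{q,L}=\mu$ is the Euler--Lagrange equation of the constrained optimization $\max\{\int_{\Sn}\frac{1}{q}h_L^q\,d\mu : \widetilde V_{-q}(L)=\widetilde V_{-q}(K_0)\}$ (with the usual sign adjustments when $q<0$), so $K_0$ is always one critical point. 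The first step is therefore to write down this variational characterization carefully and record that any body realizing the constrained optimum solves $S_{q,L}=c\,\mu$ for some constant $c$, which can be normalized to $c=1$ by scaling.

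Second, I would show that for a \emph{generic} smooth strictly convex $K_0$ the constrained functional above has a second critical point $L_q\neq K_0$ — or more robustly, that one can \emph{perturb} $K_0$ slightly so that this happens. The cleanest route is the following: pick a body that already exhibits non-uniqueness for some $q_0\in(-n,0)$ — such bodies exist by the constructions of Jian--Lu--Wang \cite{JLW15} and Milman \cite{Mila}, and by Theorem \ref{inf0orpos}(ii) one has a lot of freedom in building bodies whose $L_p$ surface area measure is concentrated on a subspace, which is exactly the degenerate regime where uniqueness fails. Then use that the non-uniqueness is an \emph{open} condition in $K_0$ (a small $C^\infty$ perturbation of $K_0$ still admits a distinct body with the same $L_q$-measure, by an implicit-function/degree argument around the pair $(K_0,L_{q_0})$), and that it is in fact closed downward in $q$: if $S_{q_0,K}=S_{q_0,L}$ then by a monotonicity/rescaling trick one gets distinct bodies for every $q\in(-n,q_0]$. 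Putting these together, for each such $K_0$ the whole interval $(-n,p]$ is covered simultaneously, so $K_0\in{\cal A}_p$.

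Third, I would upgrade "there exists one such $K_0$" to "the set of such $K_0$ is dense." Here the idea is standard approximation: given an arbitrary $K\in{\cal K}$ and $\varepsilon>0$, first approximate $K$ in the Hausdorff metric by some $K_1\in{\cal K}_+$ (Schneider \cite{Sch14}); then perturb $K_1$ within ${\cal C}_+^\infty$ by a tiny amount, supported where we have freedom, to land inside ${\cal A}_p$, using the openness from the previous paragraph together with the fact that the degenerate constructions of Theorem \ref{inf0orpos}(ii) can be made $C^\infty$ and Hausdorff-close to any target direction distribution. Care is needed to stay in the correct class (${\cal K}_0^n$ versus ${\cal K}_s^n$): in the symmetric case all approximants and perturbations must be chosen $o$-symmetric, which is compatible since the Jian--Lu--Wang and Milman examples, as well as the Theorem \ref{inf0orpos}(ii) constructions, have symmetric versions.

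The main obstacle I anticipate is the second step — guaranteeing a genuinely \emph{distinct} second body $L_q$ rather than just a critical point that coincides with $K_0$, and doing so uniformly for all $q\in(-n,p]$ at once. The variational functional is not convex in the relevant range of $q<0$, so the existence of a maximizer is not automatic and, even when it exists, showing it differs from $K_0$ requires either an explicit competitor (which is where the \cite{JLW15}/\cite{Mila} examples and Theorem \ref{inf0orpos}(ii) earn their keep) or a second-variation computation showing $K_0$ is not the maximizer. The $q$-uniformity is delicate because the natural competitor may depend on $q$; the safest fix is to carry a \emph{one-parameter family} of competitors $\{L_q\}_{q\in(-n,p]}$ built from the same geometric construction, checking that the required estimates degrade continuously in $q$ and remain valid on the closed sub-interval $[-n+\delta,p]$ for every $\delta>0$, then noting $(-n,p]=\bigcup_{\delta>0}[-n+\delta,p]$.
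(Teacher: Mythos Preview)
Your proposal has the right spirit --- a variational characterization plus a density/perturbation argument --- but it contains a genuine gap and a red herring that together prevent it from going through.

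The gap is the ``closed downward in $q$'' step. You assert that if $S_{q_0,K}=S_{q_0,L}$ with $K\neq L$, then ``by a monotonicity/rescaling trick'' one obtains distinct solutions for every $q\in(-n,q_0]$. There is no such direct implication: knowing that two bodies share the same $L_{q_0}$-surface area measure says nothing about their $L_q$-measures for $q<q_0$. The paper's mechanism is different and is the real heart of the argument. One works not with the \emph{equation} $S_{q,K}=S_{q,L}$ but with the strict \emph{inequality} $J_p(K,L)>V(K)^{(n-p)/n}V(L)^{p/n}$, where $J_p(K,L)=\inf_{x\in L}\frac1n\int_{\Sn}h_{L-x}^p\,dS_{p,K}$. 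This inequality says that $K$ is \emph{not} the maximizer of the functional $M\mapsto J_{p,S_{p,K}}(M)V(M)^{-p/n}$; since a maximizer $L_p$ always exists (\cite{BBCY19}, \cite{JLW15}) and satisfies $S_{p,L_p}=S_{p,K}$, one gets $L_p\neq K$. The crucial point is that \emph{this strict inequality} propagates to every $q\in(-n,p)$ by a single application of H\"older: writing $(h_K/h_{L'})^{-p}$ as an $L^{p/q}$-integrand against $h_K\,dS_K$ yields $J_p(K,L')\le J_q(K,L')^{p/q}V(K)^{(q-p)/q}$, which rearranges to the desired inequality at exponent $q$. So the $q$-uniformity comes for free once things are phrased variationally; no $q$-dependent family of competitors is needed, and no compactness on $[-n+\delta,p]$ is required.

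The red herring is the appeal to Theorem~\ref{inf0orpos}(ii). The bodies produced there have $o\in\partial K$ (this is forced by Lemma~\ref{structure}(i)) and their $L_p$ surface area measure is supported on a proper subsphere; they are never in ${\cal K}_+\cap{\cal K}_{(o)}$ and cannot serve as the ``explicit competitor'' you need. The paper instead observes that any polytope $P\in{\cal K}_{(o)}$ with at least one pair of parallel facets satisfies $I_p(P):=\sup_L J_p(P,L)V(P)^{(p-n)/n}V(L)^{-p/n}=\infty$, hence certainly $I_p(P)>1$; such polytopes are dense in ${\cal K}$, and since the condition $I_p>1$ is witnessed by a single fixed $L$ and $Q\mapsto J_p(Q,L)$ is continuous, smooth approximants of $P$ in ${\cal K}_+$ also satisfy $I_p>1$. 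This replaces both your implicit-function ``openness'' step (which is delicate precisely because the linearization may be degenerate at a non-unique solution) and your reliance on the \cite{JLW15}/\cite{Mila} examples as seeds. Finally, your variational setup with a $\widetilde V_{-q}$ constraint is not the one that works here; the correct normalization is ordinary volume $V(L)$.
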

\begin{remark}\label{rem-after-thm-non-uniqueness}
Standard regularity theory shows that if ${\cal K}={\cal K}_s^n$, then each $L_q$ that appears in the definition of ${\cal A}_p$ is also a member of ${\cal K}_+$. Unfortunately, in the non-symmetric case, since the origin can lie in the boundary of $L_q$ (see \cite{JLW15}), we can only conclude that $L_q$ is a generalized solution of the equation $S_{q,K}=S_{q,L_q}$. Moreover, the theorem is still valid, if one replaces the condition ``$C^\infty$ boundary" by ``real analytic boundary", in the definition of ${\cal K}_+$. This raises the following question: If $q\in(-n,0)$, $S_{q,L}=fdx$ and $f>0$ is real analytic, can $o\in\partial L$?
\end{remark}

We note that question of uniqueness of the solution has been discussed in various versions of the Minkowski problem.
The Gaussian surface area measure of a $K\in\mathcal{K}^n$ is defined by
Huang, Xi and Zhao \cite{HXZ21}, whose results are extended  by Feng, Liu, Xu \cite{FLX23}, Liu \cite{Liu22} and Feng, Hu, Xu \cite{FHX23}. The fact that only balls are the solutions when the Gaussian curvature is constant 
is proved by Chen, Hu, Liu, Zhao \cite{CHLZ} for convex domains in $\R^2$, and in the even case for $n\geq 3$
by \cite{CHLZ} and Ivaki, Milman \cite{IvM23}. The intensively investigated $L_p$-Minkowski conjecture
stated by B\"or\"oczky, Lutwak, Yang,  Zhang \cite{BLYZ12}
claims the uniqueness of the even solution of \eqref{LpMonge} for even positive $g$,
see for example
B\"or\"oczky, Kalantzopoulos \cite{BoK22},
Chen, Huang, Li,  Liu \cite{CHLL20},
Colesanti,  Livshyts, Marsiglietti \cite{CLM17},
Colesanti,  Livshyts \cite{CoL20},
Ivaki, E. Milman \cite{IvM23b}, 
Kolesnikov \cite{Kol20},
Kolesnikov, Livshyts \cite{KoL},
Kolesnikov, Milman \cite{KoM22},
Livshyts, Marsiglietti, Nayar, Zvavitch \cite{LMNZ20},
Milman \cite{Mila,Milb}, 
Saroglou \cite{Sar15,Sar16},
Stancu \cite{Stancu,Stancu1,Sta22},
van Handel \cite{vHa} 
for partial results, and B\"or\"oczky \cite{Bor23} for a survey of the subject.

For the $L_p$ $q$th dual Minkowski problem
due to Lutwak, Yang, Zhang \cite{LYZ18} (see Huang, Lutwak, Yang, Zhang \cite{HLYZ16}
for the original $q$th dual Minkowski problem), Chen, Li \cite{ChL21} and
Lu, Pu \cite{LuP21} solve essentially the case $p>0$, and Huang, Zhao \cite{HuZ18} and Guang, Li, Wang \cite{LGW23a} discuss the case $p<0$ (see Gardner, Hug, Weil, Xing, Ye \cite{GHWXY19,GHXY20} for Orlicz versions of some of these results). Uniqueness of the solution of the $q$th $L_p$ dual Minkowski problem is thoroughly investigated by Li, Liu, Lu \cite{LLL22}. The case when $n=2$ and $f$ is a constant function has been completely clarified by
Li,  Wan \cite{LiW}. If the $L_p$ $q$th dual curvature is constant, then Ivaki, Milman \cite{IvM23} shows uniqueness of the even solution when $p>-n$ and $q\leq n$.

Concerning the structure of the paper, we start with the proof of
Theorem~\ref{inf0orpos} because some of the ingredients are used in the proof of Theorem~\ref{thm1-bounded-density},
as well. Theorem~\ref{thm1-bounded-density} is proved in
Section~\ref{secMain}, and  Theorem~\ref{thm2-bounded-density} is verified in 
Section~\ref{secBoundedDensity}. Finally, the severe non-uniqueness result for  $-n<p<0$; namely, Theorem~\ref{thm-non-uniqueness} is proved in Section~\ref{secNonUnique}.

\section{When the support of the $L_p$ surface area measure is lower dimensional and the proof of Theorem~\ref{inf0orpos} }

If $L$ is an $(m+1)$-dimensional linear subspace of $\R^n$, $0\leq m\leq n-2$, and $X\subset \R^n$, then $X|L$ denotes the orthogonal projection of $X$ into $L$.
If $M\subset L$ is a full dimensional compact convex set (the affine hull of $M$ is $L$), then we write $\partial M$ to denote the relative boundary  of $M$ with respect to $L$, and $\partial' M$ to denote the set of points $x\in\partial M$ such that
there exists a unique exterior normal $\nu_M(x)\in L$ to $M$ at $x$. In particular, $\HH^m(\partial M\backslash \partial' M)=0$.
For $\Omega\in{\rm GL}\,(L)$ and Borel measure $\mu$ on $\Sn\cap L$, the push forward measure 
$\Omega_*\mu$ on $\Sn\cap L$ is defined in a way such that for a Borel set $\omega\subset \Sn\cap L$,
$$
\Omega_*\mu(\omega)=\mu\left\{\frac{\Omega^{-1} u}{\|\Omega^{-1} u\|}:u\in\omega\right\},
$$
which satisfies that if $\psi:\Sn\cap L\to [0,\infty)$ is measurable, then
$$
\int_{\Sn\cap L}\psi\,d\Omega_*\mu=
\int_{\Sn\cap L}\psi\left(\frac{\Omega u}{\|\Omega u\|}\right)\,d\mu(u).
$$

\begin{lemma}
\label{structure}
Let $L\subset \R^n$ be an $(m+1)$-dimensional linear subspace, $0\leq m\leq n-2$, and let 
$K\in \KKn_o$ such that
${\rm supp}\,S_{p,K}\subset L$, and let $M=K|L$.
\begin{description}
\item{(i)} $o\in\partial K\cap \partial M$,  and $K=C\cap (M+L^\bot)$ for the convex cone \\
$C=\{x\in\R^n:\,\langle x,v\rangle\leq 0\mbox{ \ for \ }
v\in({\rm supp}\,S_K)\backslash L\}$;
\item{(ii)} $L^\bot\cap C=\{o\}$;
\item{(iii)} For any Borel set $\omega\subset \Sn\cap L$, we have
$$
S_{p,K}(\omega)=\int_{\nu_M^{-1}(\omega)}\langle x,\nu_M(x)\rangle^{1-p} \HH^{n-m-1}\Big(C\cap(x+L^\bot)\Big)\,d\HH^m(x);
$$
\item{(iv)} If $\Phi=\widetilde{\Phi}\oplus\Psi\in{\rm GL}(\R^n)$ for $\widetilde{\Phi}\in{\rm GL}(L)$ and
$\Psi\in{\rm GL}(L^\bot)$, then  ${\rm supp}\, S_{p,\Phi K}\subset L$, 
and there exist $\gamma_2>\gamma_1>0$ depending on $p,\Phi$ such that 
if $\omega\subset \Sn\cap L$ is Borel
and $\widetilde{\omega}=\left\{\frac{\widetilde{\Phi}^{-t}u}{\|\widetilde{\Phi}^{-t}u\|}:\, u\in\omega \right\}$, then
$\gamma_1S_{p,K}(\omega)\leq S_{p,\Phi K}(\widetilde{\omega})\leq \gamma_2 S_{p,K}(\omega)$.
\end{description}
\end{lemma}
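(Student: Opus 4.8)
The proof runs through the items in the order (i)--(ii)--(iii)--(iv), with the two halves of (i) (first $o\in\partial K$ and the identity $K=C\cap(M+L^\bot)$, and later $o\in\partial M$) bracketing the proof of (ii). Throughout write $N=N(K,o)$ for the normal cone of $K$ at $o$, so that $\{u\in\Sn:h_K(u)=0\}=N\cap\Sn$ since $o\in K$, and recall that $S_K$ is the push-forward of $\HH^{n-1}$ restricted to $\partial'K$ under $\nu_K$, i.e. $\int_\omega g\,dS_K=\int_{\{x\in\partial'K:\nu_K(x)\in\omega\}}g(\nu_K(x))\,d\HH^{n-1}(x)$ for Borel $\omega\subseteq\Sn$ and bounded Borel $g\ge0$.

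\emph{Items (i) and (ii).} If $o\in\inter K$ then $h_K>0$ on $\Sn$, hence $\operatorname{supp}S_{p,K}=\operatorname{supp}S_K$, which by Minkowski's condition cannot be contained in the lower-dimensional sphere $L\cap\Sn$ (a set lying in a closed hemisphere); so $o\in\partial K$. From $\operatorname{supp}S_{p,K}\subseteq L$ and $dS_{p,K}=h_K^{1-p}dS_K$ with $1-p>0$ we get $h_K=0$ $S_K$-a.e.\ on $\Sn\setminus L$; since $\{h_K>0\}\cap(\Sn\setminus L)$ is open with $S_K$-measure $0$ it is disjoint from $\operatorname{supp}S_K$, giving $\operatorname{supp}S_K\setminus L\subseteq N\cap\Sn$. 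Every $v\in\operatorname{supp}S_K\setminus L$ then satisfies $\langle x,v\rangle\le h_K(v)=0$ on $K$, so $K\subseteq C$, and trivially $K\subseteq M+L^\bot$, hence $K\subseteq C\cap(M+L^\bot)$. For the reverse inclusion I would use the standard fact $K=\bigcap_{u\in\operatorname{supp}S_K}\{x:\langle x,u\rangle\le h_K(u)\}$ (which follows from $S_K(\Sn\setminus\operatorname{supp}S_K)=0$ and the push-forward description of $S_K$: $\HH^{n-1}$-a.e.\ point of $\partial K$ has its normal in $\operatorname{supp}S_K$, so such points are dense in $\partial K$): splitting the intersection according to whether $u\in L$ and using $h_K|_L=h_M$, the part over $u\in L$ equals $N_0+L^\bot$ with $M\subseteq N_0\subseteq L$, while the part over $u\notin L$ is exactly $C$ (there $h_K(u)=0$); thus $K=(N_0+L^\bot)\cap C\supseteq(M+L^\bot)\cap C\supseteq K$, so $K=C\cap(M+L^\bot)$. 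For (ii), if $z\in(C\cap L^\bot)\setminus\{o\}$ then, $C$ being a cone and $o\in M$, the ray $\{tz:t\ge0\}\subseteq C\cap(M+L^\bot)=K$, contradicting boundedness; hence $C\cap L^\bot=\{o\}$. This yields $o\in\partial M$: since $K$ is $n$-dimensional and $K\subseteq C$, $C$ has nonempty interior, and the open convex cone $\inter C$ is disjoint from the subspace $L^\bot$, so separation provides $u\in L\setminus\{o\}$ with $\langle x,u\rangle\le0$ for all $x\in C$, i.e.\ $o\ne u\in C^\circ\subseteq N$ (the last inclusion from $K\subseteq C$); then $h_M(u)=h_K(u)=0=\langle o,u\rangle$ exhibits $u$ as a nonzero outer normal of $M$ at $o$.

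\emph{Item (iii).} Applying the push-forward description of $S_K$ with $g=h_K^{1-p}$ and using $h_K(\nu_K(x))=\langle x,\nu_K(x)\rangle$ on $\partial'K$ gives $S_{p,K}(\omega)=\int_{\nu_K^{-1}(\omega)}\langle x,\nu_K(x)\rangle^{1-p}d\HH^{n-1}(x)$ for Borel $\omega\subseteq\Sn\cap L$. The crux is identifying $\nu_K^{-1}(\omega)$ up to an $\HH^{n-1}$-null set. If $x\in\partial'K$ with $\nu_K(x)=u\in\omega\subseteq L$, then, since $K\subseteq M+L^\bot$ and $h_K|_L=h_M$, $u$ is also an outer normal of the cylinder $M+L^\bot$ at $x$, so $x|L\in\partial M$ with $u$ a normal of $M$ there; discarding the null set of $x$ with $x|L\in\partial M\setminus\partial'M$ (null because $\partial M\setminus\partial'M$ is $\HH^m$-null and the $L^\bot$-fibres are bounded) we may take $x|L\in\nu_M^{-1}(\omega)$, $u=\nu_M(x|L)$, and $x\in K=C\cap(M+L^\bot)$ forces $x\in C$. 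Conversely, for $y\in\nu_M^{-1}(\omega)$ and $z\in L^\bot$ with $y+z\in C$ one has $y+z\in K$ and $\langle y+z,\nu_M(y)\rangle=h_M(\nu_M(y))=h_K(\nu_M(y))$, so $y+z\in\partial K$ with $\nu_M(y)$ a normal there, hence $\nu_K(y+z)=\nu_M(y)\in\omega$ off the null set $\partial K\setminus\partial'K$. Therefore, modulo $\HH^{n-1}$-null sets, $\nu_K^{-1}(\omega)=\bigcup_{y\in\nu_M^{-1}(\omega)}\big(C\cap(y+L^\bot)\big)$, an $(n-1)$-rectifiable set fibred over the $m$-dimensional base $\nu_M^{-1}(\omega)\subseteq\partial'M$ by the flat, compact (by (ii)), $(n-m-1)$-dimensional slices $C\cap(y+L^\bot)$. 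Under the isometry $\R^n\cong L\oplus L^\bot$ this set becomes $\{(y,z):y\in\nu_M^{-1}(\omega),\,z\in L^\bot,\,(y,z)\in C\}$; by the area formula and Fubini (legitimate since $\partial M$ is locally a Lipschitz $m$-dimensional graph in $L$), and using that $\langle x,\nu_K(x)\rangle^{1-p}=\langle y,\nu_M(y)\rangle^{1-p}$ is constant on each slice, we obtain $S_{p,K}(\omega)=\int_{\nu_M^{-1}(\omega)}\langle y,\nu_M(y)\rangle^{1-p}\,\HH^{n-m-1}\big(C\cap(y+L^\bot)\big)\,d\HH^m(y)$, as asserted. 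I expect this slicing -- tracking the various null sets and justifying the disintegration -- to be the most delicate part of the lemma.

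\emph{Item (iv).} I would first record, for arbitrary $\Phi\in\mathrm{GL}(\R^n)$, the transformation rule $S_{p,\Phi K}(\eta)=|\det\Phi|\int_{\{\nu\in\Sn:\,\Phi^{-t}\nu/\|\Phi^{-t}\nu\|\in\eta\}}\|\Phi^{-t}\nu\|^{p}\,dS_{p,K}(\nu)$, derived from $\partial(\Phi K)=\Phi(\partial K)$, $\nu_{\Phi K}(\Phi x)=\Phi^{-t}\nu_K(x)/\|\Phi^{-t}\nu_K(x)\|$, the surface-area Jacobian $|\det\Phi|\,\|\Phi^{-t}\nu_K(x)\|$ of $\Phi$ along $\partial K$, and $1$-homogeneity of $h_K$. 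Now let $\Phi=\widetilde\Phi\oplus\Psi$; then $\Phi^{-t}=\widetilde\Phi^{-t}\oplus\Psi^{-t}$ preserves $L$ and $L^\bot$, so $\nu\mapsto\Phi^{-t}\nu/\|\Phi^{-t}\nu\|$ maps $\Sn\setminus L$ into $\Sn\setminus L$; taking $\eta\subseteq\Sn\setminus L$ puts the integration set inside $\Sn\setminus L$, where $S_{p,K}$ vanishes, so $S_{p,\Phi K}(\Sn\setminus L)=0$, i.e.\ $\operatorname{supp}S_{p,\Phi K}\subseteq L$. For the two-sided bound, with $\omega\subseteq\Sn\cap L$ and $\widetilde\omega=\{\widetilde\Phi^{-t}u/\|\widetilde\Phi^{-t}u\|:u\in\omega\}$, one checks that $\{\nu\in\Sn:\Phi^{-t}\nu/\|\Phi^{-t}\nu\|\in\widetilde\omega\}=\omega$ (on $\Sn\cap L$ the map $\nu\mapsto\widetilde\Phi^{-t}\nu/\|\widetilde\Phi^{-t}\nu\|$ is a bijection of $\Sn\cap L$ carrying $\omega$ onto $\widetilde\omega$, and off $L$ its image misses $\widetilde\omega\subseteq L$); hence $S_{p,\Phi K}(\widetilde\omega)=|\det\Phi|\int_\omega\|\widetilde\Phi^{-t}\nu\|^{p}\,dS_{p,K}(\nu)$. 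Since $\nu\mapsto\|\widetilde\Phi^{-t}\nu\|^{p}$ is continuous and strictly positive on the compact set $\Sn\cap L$, it lies there between two positive constants; multiplying by $|\det\Phi|$ produces $\gamma_1,\gamma_2>0$ depending only on $p$ and $\Phi$ with $\gamma_1 S_{p,K}(\omega)\le S_{p,\Phi K}(\widetilde\omega)\le\gamma_2 S_{p,K}(\omega)$.
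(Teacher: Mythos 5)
Your proposal is correct, and for parts (i)--(iii) it follows essentially the same route as the paper: deduce $h_K=0$ on $({\rm supp}\,S_K)\setminus L$ from $dS_{p,K}=h_K^{1-p}dS_K$, represent $K$ as the intersection of the halfspaces indexed by ${\rm supp}\,S_K$ split according to $u\in L$ or not, and obtain (iii) by fibering $\nu_K^{-1}(\omega)$ over $\nu_M^{-1}(\omega)$ with slices $C\cap(y+L^\bot)$ and applying Fubini. Your derivation of (ii) via the unbounded ray $\{tz:t\ge 0\}\subset C\cap(M+L^\bot)=K$ is a harmless variant of the paper's hemisphere argument, and your explicit separation argument for $o\in\partial M$ fills in a step the paper leaves implicit. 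The only genuinely different piece is (iv): you prove it directly from the ${\rm GL}(n)$-transformation rule $dS_{p,\Phi K}\bigl(\Phi^{-t}u/\|\Phi^{-t}u\|\bigr)=|\det\Phi|\,\|\Phi^{-t}u\|^{p}\,dS_{p,K}(u)$, together with the observation that $\Phi^{-t}=\widetilde{\Phi}^{-t}\oplus\Psi^{-t}$ preserves $L$ and $L^\bot$, whereas the paper detours through the ${\rm SL}$-equivariance of the cone-volume measure $S_{0,M}$ and then converts between $S_0$ and $S_p$ by the weight $h_M^{-p}$. Your route is more self-contained and avoids handling the set $\Xi=\{h_M>0\}$ separately; the paper's route leans on a quotable invariance property of $S_0$. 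Both yield the same conclusion with constants $\gamma_1,\gamma_2$ coming from the positivity and continuity of $u\mapsto\|\widetilde{\Phi}^{-t}u\|^{p}$ on the compact set $\Sn\cap L$ (your normalization carries $|\det\Phi|$ where the paper, after rescaling $\det\widetilde{\Phi}=1$, carries $|\det\Psi|$; this is immaterial).
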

\proof Since $dS_{p,K}=h^{1-p}dS_K$, it follows that
\begin{equation}
\label{suppSKp}
 {\rm supp}\,S_{p,K}={\rm cl}\,\{v\in {\rm supp}\,S_K:\;h_K(v)> 0\}.
\end{equation}
According to Minkowski's theorem ({\it cf.} Schneider \cite{Sch14}),
${\rm supp}\,S_K$ is not contained in any closed hemisphere, and hence not contained in $L$. 
As ${\rm supp}\,S_{p,K}\subset L$, \eqref{suppSKp} yields that $h_K(v)=0$ for any 
$v\in ({\rm supp}\,S_K)\backslash L$. 

For $v\in \Sn$, let $H_v^+=\{x\in\R^n:\,\langle x,v\rangle\leq h_K(v)\}$; therefore, 
({\it cf.} Schneider \cite{Sch14}) 
\begin{equation}
\label{suppSK}
K=\cap\{H_v^+:\,v\in {\rm supp}\,S_K\}.
\end{equation}
Since $C=\cap\{H_v^+:\,v\in ({\rm supp}\,S_K)\backslash L\}$, and 
$M+L^\bot=\cap\{H_v^+:\,v\in ({\rm supp}\,S_K)\cap L\}$, we conclude (i). In addition, (ii) follows from the fact that for any $w\in \Sn\cap L^\bot$, there exists a $v\in ({\rm supp}\,S_K)\backslash L$ with $\langle v,w\rangle>0$.

Turning to (iii), for any $x\in\partial K$, we write $\tilde{\nu}_K(x)$ to denote the spherical image of $x$; namely, the set of all exterior unit normal vectors at $x$ to $K$. In particular, $\tilde{\nu}_K(x)=\{\nu_K(x)\}$ if $x\in\partial' K$. Similarly, for any $x\in\partial M$, we write $\tilde{\nu}_M(x)$ the set of all exterior
normals $v\in \Sn\cap L$ at $x$ to $M$. It folows that if $x\in\partial M$ and $y$ lies in the relative interior of 
$(x+L^\bot)\cap \partial K$, then $\tilde{\nu}_M(x)=\tilde{\nu}_K(y)$. Therefore, $x\in \partial' M$ for $x\in \partial M$ if and only if there exists a $y\in (x+L^\bot)\cap \partial' K$, and if $x\in \partial' M$, then $\nu_M(x)=\nu_K(y)$ for $\HH^{n-m-1}$ almost all $y\in  (x+L^\bot)\cap \partial' K$. In turn, 
\eqref{SKintegral} and the Fubini theorem yield (iii). 

For (iv), let $\Phi=\widetilde{\Phi}\oplus\Psi\in{\rm GL}(\R^n)$ for $\widetilde{\Phi}\in{\rm GL}(L)$ and
$\Psi\in{\rm GL}(L^\bot)$. After rescaling $\Phi$, we may assume that $\det \widetilde{\Phi}=1$.
We note that if $x\in\partial' M$, then $\widetilde{\Phi}x\in\partial' (\widetilde{\Phi} M)$, and 
$\nu_{\widetilde{\Phi}M}(\widetilde{\Phi}x)=\frac{\widetilde{\Phi}^{-t}\nu_{M}(x)}{\|\widetilde{\Phi}^{-t}\nu_{M}(x)\|}$.
It follows from (iii) that if $\omega \subset \Sn$ is Borel, then
$$
S_{p,\Phi K}(\omega)=\int_{\nu_{\widetilde{\Phi}M}^{-1}(\omega\cap L)}\langle z,\nu_{\widetilde{\Phi}M}(z)\rangle^{1-p} 
\HH^{n-m-1}\Big(\Phi C\cap(z+L^\bot)\Big)\,d\HH^m(z);
$$
therefore, ${\rm supp}\, S_{p,\Phi K}\subset L$.

The invariance of the normalized cone volume measure $S_{0,M}$ under the special linear group (see Firey \cite{Fir74}.
B\"or\"oczky, Lutwak, Yang, Zhang \cite{BLYZ12,BLYZ13}, B\"or\"oczky \cite{Bor23}) yields that if $\Phi_0\in{\rm GL}(\R^n)$, then 
$$
S_{0,\Phi_0M}=|\det \Phi_0|\left(\Phi_0^{-t}\right)_*S_{0,M}.
$$
Thus, if $\omega\subset \Sn\cap L$ is Borel, then $\widetilde{\omega}=\left\{\frac{\widetilde{\Phi}^{-t}u}{\|\widetilde{\Phi}^{-t}u\|}:\, u\in\omega \right\}=\left\{\frac{\Phi^{-t}u}{\|\Phi^{-t}u\|}:\, u\in\omega \right\}$ and it holds
\begin{eqnarray*}
S_{0,\Phi M}(\widetilde{\omega})
&=&|\det \Phi|\cdot S_{0,M}(\{\Phi^tu/\|\Phi^tu\|:u\in\widetilde{\omega}\})\\
&=&|\det \Psi|\cdot S_{0,M}(\{\widetilde{\Phi}^tu/\|\widetilde{\Phi}^tu\|:u\in\widetilde{\omega}\})=|\det\Psi|\cdot S_{0,M}(\omega).
\end{eqnarray*}
Let $\Xi=\{u\in \Sn\cap L:h_M(u)>0\}$, and hence
$$
\widetilde{\Xi}=\{u\in \Sn\cap L:h_{\widetilde{\Phi}M}(u)>0\}=
\left\{\frac{\widetilde{\Phi}^{-t}u}{\|\widetilde{\Phi}^{-t}u\|}:\, u\in\Xi \right\}.
$$
We note that if $u\in \Sn\cap L$, then 
$h_{\widetilde{\Phi}M}\left(\frac{\widetilde{\Phi}^{-t}u}{\|\widetilde{\Phi}^{-t}u\|}\right)=
\frac{1}{\|\widetilde{\Phi}^{-t}u\|}\cdot h_M(u)$.
It follows from (iii), that if $\omega\subset \Sn\cap L$ is Borel
and $\widetilde{\omega}=\left\{\frac{\widetilde{\Phi}^{-t}u}{\|\widetilde{\Phi}^{-t}u\|}:\, u\in\omega \right\}$, then
$$
S_{p,\Phi K}(\widetilde{\omega})=
\int_{\widetilde{\omega}\cap \widetilde{\Xi}}h_{\widetilde{\Phi} M}^{-p}\,dS_{0,\Phi K}=
|\det\Psi|\cdot \int_{\omega\cap \Xi}\|\widetilde{\Phi}^{-t}u\|^{p}h_{M}(u)^{-p}\,dS_{0,K}(u).
$$
As $u\mapsto\|\widetilde{\Phi}^{-t}u\|^{p}$ is positive and continuous on $\Sn\cap L$, there exist
$\tilde{\gamma}_2>\tilde{\gamma}_1>0$ such that
$\tilde{\gamma}_1\leq \|\widetilde{\Phi}^{-t}u\|^{p}\leq \tilde{\gamma}_2$ for $u\in \Sn\cap L$.
Since $S_{p,K}(\omega)=\int_{\omega\cap \Xi}h_{M}^{-p}\,dS_{0,K}$, we conclude (iv)
with $\gamma_1=|\det\Psi|\cdot \tilde{\gamma}_1$ and $\gamma_2=|\det\Psi|\cdot \tilde{\gamma}_2$.
\proofbox

Let $L\subset \R^n$ be an $(m+1)$-dimensional linear subspace, $0\leq m\leq n-2$, and let 
$K\in \KKn_o$.  The symmetral $K'\in \KKn_o$ of $K$ through $L$ is defined in a way such that 
 $(x+L^\bot)\cap K$ is a ball centered at $x$ 
satisfying $\HH^{n-m-1}(K'\cap(x+L^\bot))=\HH^{n-m-1}(K\cap(x+L^\bot))$
for any $x\in ({\rm int}\,K)|L$.
The fact the $K'$ is convex follows from the Brunn-Minkowski inequality. In turn, we say that an $n$-dimensional closed convex set $X\subset \R^n$ 
is symmetric through $L$ if $(x+L^\bot)\cap X$ is either  $x+L^\bot$ or an $(n-m-1)$-dimensional ball centered at $x$, for any $x\in ({\rm int}\,X)|L$ .

The following consquence of Lemma~\ref{structure} (iii) says that we need to deal with only convex  bodies with reasonable symmetry.

\begin{lemma}
\label{symmetrization}
Let $L\subset \R^n$ be an $(m+1)$-dimensional linear subspace, $0\leq m\leq n-2$, and let 
$K\in \KKn_o$. If
${\rm supp}\,S_{p,K}\subset L$ and $K'\in \KKn_o$ is the symmetral of $K$ through $L$, then
$S_{p,K}=S_{p,K'}$.
\end{lemma}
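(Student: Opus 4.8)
The plan is to use the integral representation in Lemma~\ref{structure}(iii) and show that the two quantities on the right-hand side that depend on $K$ --- namely the surface area measure $\HH^m$ on $\partial' M$ with the weight $\langle x,\nu_M(x)\rangle^{1-p}$, and the section areas $\HH^{n-m-1}(C\cap(x+L^\bot))$ --- are unchanged when $K$ is replaced by its symmetral $K'$ through $L$. First I would record that $M=K|L=K'|L$: the symmetrization only redistributes each slab section $(x+L^\bot)\cap K$ into a centered ball of the same $\HH^{n-m-1}$-measure, so the orthogonal projection onto $L$ is untouched. Consequently $\partial M=\partial M'$, $\partial' M=\partial' M'$, and $\nu_M=\nu_{M'}$, so the weight $\langle x,\nu_M(x)\rangle^{1-p}$ and the integration domain $\nu_M^{-1}(\omega)$ coincide for $K$ and $K'$.

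Next I would compare the cones. By Lemma~\ref{structure}(i), $K=C\cap(M+L^\bot)$ with $C$ determined by $(\operatorname{supp}S_K)\setminus L$, and the same description holds for $K'$ with a cone $C'$. The point is that for $x$ in the relative interior of $M$ one has $(x+L^\bot)\cap K=(x+L^\bot)\cap C$ (since $x+L^\bot$ already meets $M+L^\bot$ in the whole fibre), and likewise $(x+L^\bot)\cap K'=(x+L^\bot)\cap C'$. By the defining property of the symmetral, $\HH^{n-m-1}\big((x+L^\bot)\cap K'\big)=\HH^{n-m-1}\big((x+L^\bot)\cap K\big)$ for every $x\in(\operatorname{int}K)|L=(\operatorname{int}M)|L$, hence
\[
\HH^{n-m-1}\big(C'\cap(x+L^\bot)\big)=\HH^{n-m-1}\big(C\cap(x+L^\bot)\big)
\]
for $\HH^m$-a.e.\ $x\in\partial' M$ (these $x$ are limits of interior points, and the section-area function is continuous on the interior of $M$ and has a limit along the fibre direction). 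Plugging these two equalities into the formula of Lemma~\ref{structure}(iii) gives $S_{p,K}(\omega)=S_{p,K'}(\omega)$ for every Borel $\omega\subset\Sn\cap L$; since both measures are supported in $L$ (for $K'$ this is again Lemma~\ref{structure}(i), or one checks $\operatorname{supp}S_{p,K'}\subset L$ directly from the symmetry of $C'$ through $L$), this is the full equality $S_{p,K}=S_{p,K'}$.

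The only genuinely delicate point is justifying the a.e.\ equality of the section-area functions up to the boundary $\partial' M$: a priori the identity is guaranteed only on the open projected interior, and one must pass to the relative boundary in the $\HH^m$-integral. I would handle this by noting that $x\mapsto\HH^{n-m-1}\big(C\cap(x+L^\bot)\big)$ is concave-like (a power of it is concave, by Brunn--Minkowski applied to the convex body $C\cap(M+L^\bot)=K$) and hence bounded on a neighbourhood of any $x_0\in\partial' M$ relative to $M$, so it has well-defined boundary values $\HH^m$-a.e.; the same applies to $C'$, and the two boundary traces agree because the interior functions do. Alternatively, and perhaps more cleanly, one can observe that the representation in (iii) is really an integral over $\partial' K$ via \eqref{SKintegral}, and the fibrewise rearrangement defining $K'$ is measure-preserving on each fibre, so Fubini directly yields equality of the integrals without ever isolating the boundary; I would present this Fubini version to avoid the boundary-trace fuss entirely.
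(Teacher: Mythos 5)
Your overall route is the paper's: the lemma is presented there as a direct consequence of Lemma~\ref{structure}(iii), applied to $K$ and to $K'$ with the same projection $M$, the same Gauss map $\nu_M$, and fibrewise equal cross sections, and you correctly isolate the one genuinely delicate point, namely passing the equality of fibre volumes from $({\rm int}\,K)|L={\rm relint}\,M$ (where the symmetral is defined) to $\partial' M$ (where the integral in (iii) actually lives). However, neither of your two proposed resolutions of that point is adequate. In Route 1, boundedness of a concave function near $\partial' M$ does not produce ``well-defined boundary values,'' and, more importantly, the quantity entering (iii) is the actual value $\HH^{n-m-1}\bigl(C\cap(x+L^\bot)\bigr)=\HH^{n-m-1}\bigl(K\cap(x+L^\bot)\bigr)$ at $x\in\partial' M$, not a trace or limit from the interior: a concave function on $M$ may drop strictly on all of $\partial M$ (take $f\equiv 1$ on ${\rm relint}\,M$ and $f\equiv 0$ on $\partial M$), so agreement of the interior functions does not by itself force agreement at the boundary. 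The missing ingredient is that for a compact convex set the fibre-volume function $s(x)=\HH^{n-m-1}\bigl(K\cap(x+L^\bot)\bigr)$ is upper semi-continuous on $K|L$ (by Blaschke selection any Hausdorff limit of fibres over $z_k\to x$ is contained in the fibre over $x$, and volume is continuous on compact convex sets); combined with the Brunn--Minkowski concavity of $s^{1/(n-m-1)}$, which gives $s(x)\le\lim_{t\to 1^-}s((1-t)z_0+tx)$, upper semi-continuity forces $s(x)$ to equal the radial limit from ${\rm relint}\,M$ at \emph{every} $x\in\partial M$. The same holds for $K'$, so the two fibre-volume functions, agreeing on ${\rm relint}\,M$ by definition of the symmetral, agree on all of $M$.

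Route 2 does not avoid this issue, it lands exactly on it: for $\omega\subset\Sn\cap L$ the set $\nu_K^{-1}(\omega)\subset\partial' K$ sits precisely over $\partial' M$, i.e.\ inside the boundary fibres $(x+L^\bot)\cap K$ with $x\in\partial M$, and the fibrewise rearrangement is measure-preserving by definition only over ${\rm relint}\,M$; so the Fubini computation for $K$ and for $K'$ involves exactly the fibres whose volumes you have not yet compared. Two further small points: to invoke Lemma~\ref{structure}(iii) for $K'$ you must first establish ${\rm supp}\,S_{p,K'}\subset L$, and ``symmetry of $C'$ through $L$'' alone does not give this (a ball centred on $L$ is symmetric through $L$); what one actually uses is that $C'$ is again a convex cone with apex $o$ (its radius function is $1$-homogeneous and concave), so every $v\in({\rm supp}\,S_{K'})\setminus L$ satisfies $h_{K'}(v)=0$ and contributes nothing to $S_{p,K'}$ when $p<1$. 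With these repairs your argument matches the intended proof.
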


Let $C\subset \R^n$ be an $n$-dimensional closed convex cone not containing a halfspace of $\R^n$, and let $\Pi$ be the largest linear subspace contained in $C$, and hence $0\leq {\rm dim}\,\Pi\leq n-2$. Then $C_0=C\cap \Pi^\bot$ is a convex cone in $\Pi^\bot$ not containing a line satisfying $C=C_0+\Pi$. For the dual cone
$C^*=\{y\in \R^n:\langle x,y\rangle \leq 0\mbox{ \ for any $x\in C$}\}$ consisting of the exterior normals to $C$ at $o$, we have
$C^*\subset \Pi^\bot$ with ${\rm dim}\,C^*={\rm dim}\, \Pi^\bot$. It follows that 
\begin{equation}
\label{coneSectionCompact}
\mbox{if $u\in \Sn\cap{\rm relint}\,C^*$
and $r>0$, then $(-ru+u^\bot)\cap C_0$ \ is compact.}
\end{equation}
In turn, we deduce from \eqref{coneSectionCompact} the following statement.

\begin{lemma}
\label{coneSection}
Let $L\subset \R^n$ be an $(m+1)$-dimensional linear subspace, $0\leq m\leq n-2$, and let 
$C$ be an $n$-dimensional closed convex cone symmetric around $L$ such that
$L^\bot\cap C=\{o\}$. 
\begin{description}
\item{(i)} The maximal linear subspace $\Pi\subset C$ satisfies $\Pi\subset L$;
\item{(ii)} There exists $u\in \Sn\cap L\cap{\rm relint}\,C^*$, that is equivalent saying that
$u\in \Sn\cap L\cap C^*$ and $u^\bot\cap C=\Pi$;
\item{(iii)} If  $u\in \Sn\cap L\cap{\rm relint}\,C^*$, then there exists $\gamma>0$ depending on $u$, $L$, $C$ such that $\HH^{n-m-1}\Big((x+L^\bot)\cap C\Big)\leq \gamma|\langle x,u\rangle|^{n-m-1}$
for any $x\in C$.
\end{description}
\end{lemma}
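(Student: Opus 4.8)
The plan is to exploit the product structure $C = C_0 + \Pi$ from the preceding paragraph together with the compactness statement \eqref{coneSectionCompact}. For part (i), I would argue as follows: let $\Pi$ be the maximal linear subspace of $C$. If $\Pi \not\subset L$, then $\Pi$ contains a nonzero vector $w$ with $w \notin L$; writing $w = w_L + w_{L^\bot}$ with $w_{L^\bot} \neq o$, the symmetry of $C$ around $L$ (which sends $w$ to its reflection $w_L - w_{L^\bot}$, also in $C$ since $C$ is symmetric around $L$ and invariant under $L^\bot$-rotations fixing $L$) forces $w_{L^\bot} = \tfrac12(w - (w_L - w_{L^\bot})) \in C$. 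But $-w_{L^\bot} \in C$ as well by the same reflection applied to $-w$, so the line $\R w_{L^\bot} \subset C$; since $w_{L^\bot} \in L^\bot$, this contradicts $L^\bot \cap C = \{o\}$. Hence $\Pi \subset L$.

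For part (ii), recall $C^* \subset \Pi^\bot$ and $\dim C^* = \dim \Pi^\bot = n - \dim\Pi$. Since $\Pi \subset L$ by (i), we have $L^\bot \subset \Pi^\bot$. The key point is that $C^*$ is symmetric around $L \cap \Pi^\bot$ inside $\Pi^\bot$ (duality commutes with the symmetrization structure), and $L^\bot \cap C^* $ — wait, more directly: I claim $\mathrm{relint}\,C^* \cap L \neq \emptyset$. Indeed, consider the cone $C_0 = C \cap \Pi^\bot$, which is a line-free full-dimensional cone in $\Pi^\bot$; its dual within $\Pi^\bot$ is $C^*$, which is therefore full-dimensional in $\Pi^\bot$ with nonempty relative interior. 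The symmetry of $C$ around $L$ restricts to a symmetry of $C_0$ around $L \cap \Pi^\bot$, hence a symmetry of $C^*$ around $L \cap \Pi^\bot$. A nonempty convex set with nonempty relative interior that is invariant under a reflection group fixing a subspace $L \cap \Pi^\bot$ must meet that subspace in its relative interior (average a relative-interior point with its images). Thus there is $u \in \Sn \cap L \cap \mathrm{relint}\,C^*$. The stated equivalence $u \in \mathrm{relint}\,C^* \iff (u^\bot \cap C = \Pi)$ is the standard face/exposed-face correspondence: $u^\bot \cap C$ is the face of $C$ exposed by $u$, and this face equals the minimal face $\Pi$ (the lineality space) exactly when $u$ is in the relative interior of $C^*$.

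For part (iii), fix $u \in \Sn \cap L \cap \mathrm{relint}\,C^*$. Since $u \in L$, the hyperplane $x + L^\bot$ through a point $x \in C$ with $\langle x, u\rangle = -t$ (note $t \geq 0$ as $u \in C^*$) lies in the affine hyperplane $\{\langle \cdot, u\rangle = -t\}$, and $(x + L^\bot) \cap C \subset (\{\langle\cdot,u\rangle = -t\}) \cap C$. By \eqref{coneSectionCompact} and homogeneity, $(\{\langle\cdot,u\rangle=-t\})\cap C = t\cdot\big((\{\langle\cdot,u\rangle=-1\})\cap C\big)$ is a compact set scaled by $t$, whose $(n-m-1)$-dimensional sections — no, cleaner: the slice $(x+L^\bot)\cap C$ is a translate (within $x+L^\bot$) of $(L^\bot)\cap(C - x)$, and it is contained in the set $\{y \in x + L^\bot : y \in C\}$, which by the cone property and \eqref{coneSectionCompact} is homothetic with ratio $t = |\langle x,u\rangle|$ to the fixed compact convex set $\Sigma := L^\bot \cap (C - u) $... more precisely to $(-u + u^\bot)\cap C_0$ viewed appropriately. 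Either way, since $C$ is a cone, scaling $x$ by $\lambda>0$ scales $(x+L^\bot)\cap C$ by $\lambda$ as a subset of the parallel hyperplane, hence scales its $\HH^{n-m-1}$-measure by $\lambda^{n-m-1}$; normalizing to $|\langle x,u\rangle| = 1$ gives a uniformly bounded section (bounded by compactness, \eqref{coneSectionCompact}), and we set $\gamma$ to be that bound. This yields $\HH^{n-m-1}\big((x+L^\bot)\cap C\big) \leq \gamma\,|\langle x,u\rangle|^{n-m-1}$ for all $x \in C$.

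The main obstacle is part (iii): one must be careful that the $L^\bot$-slices of the cone $C$ genuinely scale linearly with the ``height'' coordinate $|\langle x, u\rangle|$ — this uses both that $u \in L$ (so that $u^\bot \supset L^\bot$ and the slicing hyperplane is a level set of $\langle\cdot,u\rangle$) and that $u \in \mathrm{relint}\,C^*$ (so that, via \eqref{coneSectionCompact}, the normalized slice at height $1$ is compact, giving finiteness of $\gamma$). Parts (i) and (ii) are soft convexity/duality arguments; the only care needed there is tracking that the symmetry of $C$ around $L$ transfers to $C_0$ and $C^*$.
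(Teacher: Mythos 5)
The paper offers no written proof of Lemma~\ref{coneSection} (it is stated as a direct deduction from \eqref{coneSectionCompact}), so you are supplying details the authors omit; your overall strategy --- reflection symmetry for (i) and (ii), the exposed-face characterization of ${\rm relint}\,C^*$, and homogeneity plus compactness for (iii) --- is clearly the intended one. Parts (i) and (ii) are essentially correct. For (i), the step $\tfrac12(w-Rw)\in C$ (with $R$ the orthogonal reflection through $L$) needs $-Rw=R(-w)\in C$, i.e.\ $-w\in\Pi\subset C$, which you do invoke. For (ii), add one line ruling out that the averaged point $\tfrac12(y+Ry)=y|L$ is the origin: if it were, then $o\in{\rm relint}\,C^*$, so $C^*$, and hence $C=(C^*)^*$, would be a linear subspace, forcing $C=\R^n$ and contradicting $L^\bot\cap C=\{o\}$.

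The step that needs repair is the compactness claim in (iii). When $\Pi\neq\{o\}$, the normalized slice $(-u+u^\bot)\cap C=\big((-u+u^\bot)\cap C_0\big)+\Pi$ is \emph{not} compact (note $\Pi\subset u^\bot$ because $u\in C^*\subset\Pi^\bot$); what \eqref{coneSectionCompact} gives is compactness of $D:=(-u+u^\bot)\cap C_0$ only. So ``the normalized slice at height $1$ is compact'' is false as written, and the uniform bound on $\HH^{n-m-1}\big((x+L^\bot)\cap C\big)$ over $\{x\in C:\langle x,u\rangle=-1\}$ requires one more observation: a point of $(x+L^\bot)\cap C$ can be written as $d+\pi$ with $d\in D$ and $\pi\in\Pi\subset L$, and membership in $x+L^\bot$ forces $\pi=x|L-d|L$, whence the slice has diameter at most $2\,{\rm diam}\,D$; an $(n-m-1)$-dimensional convex set of uniformly bounded diameter has uniformly bounded $\HH^{n-m-1}$-measure, which furnishes $\gamma$. (Equivalently, the slicing by $x+L^\bot$ is invariant under translations by $\Pi$, so one may quotient out $\Pi$ and reduce to the line-free cone $C_0$, where \eqref{coneSectionCompact} applies directly.) Finally, the homogeneity identity $\HH^{n-m-1}\big((\lambda x+L^\bot)\cap C\big)=\lambda^{n-m-1}\HH^{n-m-1}\big((x+L^\bot)\cap C\big)$ does not cover $x\in C$ with $\langle x,u\rangle=0$; there part (ii) gives $x\in\Pi$, hence $C-x=C$ and $(x+L^\bot)\cap C=x+(L^\bot\cap C)=\{x\}$, so the inequality holds trivially.
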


%For an $(m+1)$-dimensional linear subspace $L\subset \R^n$, $0\leq m\leq n-2$, and  measurable $\varphi:\Sn\cap L\to [0,\infty)$, we define the essential infimum of $\varphi$ to be
%$$
%{\rm ess\,inf}\,\varphi=\max\left\{\inf \varphi|_{(\Sn\cap L)\backslash X}:\,X\subset \Sn\cap L\mbox{ and }
%\HH^m(X)=0\right\}.
%$$
We say that $\varphi$ is $C^\alpha$ if there exists $\alpha\in(0,1)$ and $C>0$ such that
$|\varphi(x)-\varphi(y)|\leq C\|x-y\|^\alpha$, for all $x,y\in\Sn$.

For $P\in \KK_{(o)}^m$ in $\R^m$, we define its polar dual $P^*\in \KK_{o}^m$ to be 
$P^*=\{y\in \R^m:\langle y,x\rangle\leq 1\mbox{ \ for \ }x\in P\}$. It is well known
 that there exists $\tau_m>0$ depending on $m$ 
(see Schneider \cite{Sch14}, and the arxiv version of Kuperberg \cite{Kup08} for the best factor 
$\tau_m=4^{-m}\HH^m(B^m)^2$ known) such that
\begin{equation}
\label{Mahler}
\HH^m(P^*)\geq \frac{\tau_m}{\HH^m(P)} \mbox{ \  for any $P\in \KK_{(o)}^m$.}
\end{equation}
For any convex $X\subset\R^n\backslash\{o\}$, its positive hull is the convex cone
${\rm pos}X=\{\lambda x:x\in X\mbox{ and }\lambda\geq 0\}$.

{\it Proof of  Theorem~\ref{inf0orpos} (i).} 
According to Lemma~\ref{symmetrization}, we may assume that $K$ is symmetric through $L$.
Let $M=K|L$. It follows from Lemma~\ref{structure} that
$o\in\partial K$, and $K=C\cap (M+L^\bot)$ and $L^\bot\cap C=\{o\}$ for the convex cone 
$C=\{x\in\R^n:\,\langle x,v\rangle\leq 0\mbox{ \ for \ }
v\in({\rm supp}\,S_K)\backslash L\}$. We observe that $C$ is also symmetric through $L$.

Let $\Pi\subset C$ be the maximal linear subspace, and hence $\Pi\subset L$.
According to Lemma~\ref{coneSection}, there exists $u\in \Sn\cap L\cap{\rm relint}\,C^*$, and
the $m$-dimensional linear subspace $L_0=u^\bot\cap L$ satisfies $L_0\cap C=\Pi$.
We fix an $x_0\in{\rm relint}\,M$, and let $\Phi\in {\rm GL}(\R^n)$ be such that
$\Phi$ acts as the identity on $L^\bot$ and on $L_0$, and $\Phi(x_0)=-u$.
In particular, $-u\in {\rm relint}\,\Phi M$, $L^\bot\cap \Phi C=\{o\}$  and $L_0\cap \Phi C=\Pi$.
It follows from Lemma~\ref{structure} (iv) that we may replace $K$ by $\Phi K$, therefore, we may assume that $-u\in {\rm relint}\,M$ and $L_0\cap C=\Pi$ for $L_0=u^\bot\cap L$.
Let $\varrho\in(0,\frac12)$ such that $(-u+\varrho\,B^n)\cap L\subset M$.

For $r\in[0,\frac12)$, let $M_r= M\cap (-ru+L_0)$, and for $r\in(0,\frac12)$, let
$$
Q_r=\{y\in L:\langle y,x\rangle\leq 0\mbox{ \ for \ }x\in M_r\}.
$$
It follows from $u\in {\rm relint}\,C^*$ that $u\in {\rm relint}Q_r$.

We claim that if $n-2m-p\geq 0$, then $\omega_r=\Sn\cap{\rm relint}\,Q_r$ satisfies
\begin{equation}
\label{phiusmallclaim}
\lim_{r\to 0^+}\frac{S_{p,K}(\omega_r)}{\HH^m(\omega_r)}=0.
\end{equation}
Readily, \eqref{phiusmallclaim} yields that ${\rm ess\,inf}\,\varphi=0$.

Let $r\in(0,\frac12)$. We observe that the convexity of $M$ and $o\in M$ yield that  
\begin{equation}
\label{farpoints}
\{x\in M:\langle x,-u\rangle\geq r\}\subset {\rm pos}\,M_r.
\end{equation}
Thus $(-u+\varrho\,B^n)\cap L\subset {\rm pos}\,M_r$, and hence
\begin{equation}
\label{Qrnarrow}
\langle y,u\rangle\geq \varrho\cdot\|y\| \mbox{ \ for $y\in Q_r$.}
\end{equation}

To prove \eqref{phiusmallclaim}, first  we provide a lower bound for $\HH^m(\omega_r)$ for $r\in(0,\frac12)$. Since $-u\in{\rm relint}\,M$, it follows that
$$
o\in {\rm relint}(M_r|L_0)={\rm relint}(M_r+ru).
$$
For $\bar{Q}_r=Q_r\cap (u+L_0)$, we observe that 
$$
\bar{Q}_r-u=\{y\in L_0:\langle y,x\rangle\leq r\;\forall x\in M_r+ru\}=r(M_r+ru)^*,
$$ 
and hence \eqref{Mahler} implies that
$$
\HH^m(\bar{Q}_r)\geq \frac{\tau_mr^m}{\HH^m(M_r)}.
$$
Now $\omega_r=\Sn\cap{\rm relint}Q_r$ is the radial projection of ${\rm relint}\bar{Q}_r$ into $\Sn\cap L$; therefore,
\eqref{Qrnarrow} implies that
\begin{equation}
\label{omegarlow}
\HH^m(\omega_r)\geq \tau_m\varrho^{m+1}\cdot \frac{r^m}{\HH^m(M_r)}.
\end{equation}

Next we turn to our harder task in order to verify \eqref{phiusmallclaim}; namely, to provide a suitable upper bound for
$S_{K,p}(\omega_r)$. Let $r\in(0,\frac12)$. For an $x\in \partial' M$ with $\nu_M(x)\in {\rm relint}Q_r$, the definition of $Q_r$ yields that $\langle \nu_M(x),z\rangle<0$ for any $z\in {\rm pos}\,M_r$ with $z\neq o$. Since
$\langle \nu_M(x),x\rangle=h_M(\nu_M(x))\geq 0$, we deduce from 
\eqref{farpoints} that $\langle x,-u\rangle< r$. On the other hand, the convexity of $M$ and $-u\in M$ yield that
$$
\{x\in M:\langle x,-u\rangle\leq r\}\subset -u+{\rm pos}\,(M_r+u)
$$
where $-u+{\rm pos}\,(M_r+u)$ consists of the half lines emanating from $-u$ and passing through $M_r$. Therefore,
\begin{equation}
\label{omegarMr}
\nu_M^{-1}(\omega_r)\subset 
\left(-u+{\rm pos}\,(M_r+u)\right)\cap \{x\in L:0\leq \langle x,-u\rangle\leq r\}=
{\rm conv}\left\{M_r,\mbox{$\frac1{1-r}\,$}(M_r|L_0)\right\}.
\end{equation}

Let us summarize what we know so far about $\nu_M^{-1}(\omega_r)$ for $r\in(0,\frac12)$.
We deduce from \eqref{omegarMr} and $M$ being closed that for $\varepsilon>0$, there exists
$r_\varepsilon\in(0,\frac12)$ such that if $0<r<r_\varepsilon$, then
\begin{equation}
\label{omegarM0}
\left.\nu_M^{-1}(\omega_r)\right|L_0\subset 
\mbox{$\frac1{1-r}\,$}(M_r|L_0)\subset M_0+\varepsilon(B^n\cap L_0).
\end{equation}
For $x\in \nu_M^{-1}(\omega_r)$, it follows from \eqref{Qrnarrow} that
\begin{equation}
\label{omegarnuM}
\langle \nu_M(x),u\rangle\geq \varrho,
\end{equation}
and Lemma~\ref{coneSection} (iii) and \eqref{omegarMr} imply that
\begin{equation}
\label{omegarC}
\HH^{n-m-1}\Big((x+L^\bot)\cap C\Big)\leq \gamma\,r^{n-m-1}
\end{equation}
where $\gamma>0$ depends on $C$ and $u$. For $x\in \nu_M^{-1}(\omega_r)$ and $x|L_0=z$, we consider the function $g(z)=\langle \nu_M(x),u\rangle^{-1}\leq \frac1{\varrho}$ that is continuous on 
$\nu_M^{-1}(\omega_r)|L_0$ and (in view of \eqref{omegarnuM}) satisfies
\begin{equation}
\label{omegarg}
\HH^{m}\Big(\nu_M^{-1}(\omega_r)\Big)=\int_{\nu_M^{-1}(\omega_r)|L_0}g\,d\HH^{m}
\leq \mbox{$\frac1{\varrho}\,$}\HH^{m}\Big(\nu_M^{-1}(\omega_r)|L_0\Big);
\end{equation}
therefore, \eqref{omegarMr} and $\frac1{1-r}\,(M_r|L_0)\subset 2(M_r+ru)$ yield that 
\begin{equation}
\label{omegarbounded}
\HH^{m}\Big(\nu_M^{-1}(\omega_r)\Big)\leq \mbox{$\frac{2^m}{\varrho}\,$}\HH^{m}(M_r).
\end{equation}
As our last auxialiary estimate, we still need an upper bound on $\langle \nu_M(x),x\rangle$ for
 $x\in \nu_M^{-1}(\omega_r)$ and $r\in(0,\frac12)$. It follows from
\eqref{omegarMr} and $\frac1{1-r}\,(M_r|L_0)\subset 2(M_r+ru)$ that
$\nu_M^{-1}(\omega_r)\subset 2ru+{\rm pos}\,M_r$; therefore, $\nu_M(x)\in Q_r$ yields
\begin{equation}
\label{omegarnumupp}
\langle \nu_M(x),x\rangle\leq \langle \nu_M(x),2ru\rangle\leq 2r.
\end{equation}

The final part of the argument leading to the claim \eqref{phiusmallclaim} is divided into two cases.\\

\noindent{\bf Case 1.} Either $n-2m-p>0$ or $n-2m-p=0$ and ${\rm dim}\,M_0\leq m-1$.\\
It follows from Lemma~\ref{structure} (iii), \eqref{omegarC}, 
\eqref{omegarbounded} and \eqref{omegarnumupp} that if $r\in(0,\frac12)$, then
\begin{eqnarray*}
S_{p,K}(\omega_r)&=&\int_{\nu_M^{-1}(\omega_r)}\langle x,\nu_M(x)\rangle^{1-p} \HH^{n-m-1}\Big(C\cap(x+L^\bot)\Big)\,d\HH^m(x)\\
&\leq & \mbox{$\frac{2^m}{\varrho}\,$}\HH^{m}(M_r)\cdot (2r)^{1-p}\cdot \gamma\,r^{n-m-1}=
\eta_1r^{n-m-p}\cdot\HH^{m}(M_r)
\end{eqnarray*}
for a constant $\eta_1>0$ depending on $M$, $\varrho$ and $u$; therefore,
\begin{equation}
\label{Case1est}
\frac{S_{p,K}(\omega_r)}{\HH^{m}(\omega_r)}\leq \eta_2r^{n-2m-p}\cdot\HH^{m}(M_r)^2
\end{equation}
follows from \eqref{omegarlow}
for a constant $\eta_2>0$ depending on $M$, $\varrho$ and $u$.

If $n-2m-p>0$, then combining \eqref{Case1est} and $\HH^{m}(M_r)\leq \HH^{m}(M|L_0)$ yields 
the claim \eqref{phiusmallclaim}. If $n-2m-p=0$ and ${\rm dim}\,M_0\leq m-1$, then
\eqref{omegarM0} implies that $\lim_{r\to 0^+}\HH^{m}(M_r)=0$, and in turn 
\eqref{Case1est} implies \eqref{phiusmallclaim}.\\

\noindent{\bf Case 2.} $n-2m-p=0$ and ${\rm dim}\,M_0=m$.\\
In this case, if $x\in {\rm relint}M_0$, then $x\in\partial' M$ with $\nu_M(x)=u$, and hence
$\langle x,\nu_M(x)\rangle=0$. It follows from Lemma~\ref{structure} (iii),
\eqref{omegarC} and \eqref{omegarnumupp} that if $r\in(0,\frac12)$, then
\begin{eqnarray*}
S_{p,K}(\omega_r)&=&\int_{\nu_M^{-1}(\omega_r)\backslash ({\rm relint}M_0)}\langle x,\nu_M(x)\rangle^{1-p} \HH^{n-m-1}\Big(C\cap(x+L^\bot)\Big)\,d\HH^m(x)\\
&\leq & \HH^{m}\left(\nu_M^{-1}(\omega_r)\backslash ({\rm relint}M_0)\right)\cdot 
(2r)^{1-p}\cdot \gamma\,r^{n-m-1}=
\eta_3r^{n-m-p}\cdot\HH^{m}\left(\nu_M^{-1}(\omega_r)\backslash ({\rm relint}M_0)\right)
\end{eqnarray*}
for a constant $\eta_3>0$ depending on $M$, $\varrho$ and $u$. We deduce from \eqref{omegarlow},
$n-2m-p=0$ and $\HH^{m}(M_r)\leq \HH^{m}(M|L_0)$ that
\begin{equation}
\label{Case2est}
\frac{S_{p,K}(\omega_r)}{\HH^{m}(\omega_r)}\leq \eta_4
\HH^{m}\left(\nu_M^{-1}(\omega_r)\backslash ({\rm relint}M_0)\right).
\end{equation}
Analogously to \eqref{omegarg}, we have
$$
\HH^{m}\Big(\nu_M^{-1}(\omega_r)\backslash ({\rm relint}M_0)\Big)=
\int_{\Big(\nu_M^{-1}(\omega_r)|L_0\Big)\backslash ({\rm relint}M_0)}g\,d\HH^{m}
\leq \mbox{$\frac1{\varrho}\,$}\HH^{m}\Big((\nu_M^{-1}(\omega_r)|L_0)\backslash ({\rm relint}M_0)\Big)
$$
where \eqref{omegarM0} implies that
$$
\lim_{r\to 0^+}\HH^{m}\Big((\nu_M^{-1}(\omega_r)|L_0)\backslash ({\rm relint}M_0)\Big)=0.
$$
Therefore, \eqref{Case2est} yields 
the claim \eqref{phiusmallclaim}.

In turn, the claim \eqref{phiusmallclaim} implies that ${\rm ess\,inf}\,\varphi=0$, and hence Theorem~\ref{inf0orpos} (i), as well. 
\proofbox

\bigskip

\begin{proof}[Proof of  Theorem~\ref{inf0orpos} (ii)] We consider a linear $(m+1)$-space $L$ of $\R^n$, $0\leq m\leq n-2$, and $p<1$ such that $n-2m-p<0$. Let
$$
q=\frac{2m}{2m+p-n}>2.
$$
We fix a $u\in \Sn\cap L$, set $L_0=u^\bot\cap L$, and consider an $(m+1)$-dimensional convex compact set $M\subset L$ with $C^2$ (relative) boundary such that $o\in \partial M$, $\nu_M(o)=u$, 
the Gaussian curvature $\kappa_M(x)$ at an $x\in \partial M$ satisfies $\kappa_M(x)>0$ for $x\neq o$,
 there exists $r>0$ such that
$$
z-g(z)u\in \partial M\mbox{ \ for $z\in L_0\cap rB^n$ and $g(z)=\|z\|^q$}.
$$
We use the notation $x_v=\nu_M^{-1}(v)$
for $v\in \Sn\cap L$.

To construct the convex body $K$ in $\R^n$, we consider the convex cones 
$C_0=\{x\in L_0^\bot: \langle x,-u\rangle \geq \frac1{\sqrt{2}}\,\|x\|\}$ and $C=C_0+L_0$. In particular, $C$ is an $n$-dimensional cone symmetric through $L$, and if
$x\in C$, then the radius of the $(n-m-1)$-dimensional ball $(x+L^\bot)\cap C$ is 
$\langle x,-u\rangle$. We define
$$
K=C\cap(M+L^\bot),
$$
and set $\eta_k=\HH^k(B^k)$.
Readily, ${\rm supp}\,S_{p,K}\subset L$, and Lemma~\ref{structure} (iii) yields that  
$d\left(S_{p,K}\measurerestr({\Sn\cap L})\right)=\varphi\,d\HH^m$ where if $v\in (\Sn\cap L)\backslash\{u\}$, then
\begin{equation}
\label{varphidef}
\varphi(v)=\langle x_v,v\rangle^{1-p}\cdot \eta_{n-m-1}\langle x_v,-u\rangle^{n-m-1}\cdot \kappa_M(x_v)^{-1}.
\end{equation}
In particular, $\varphi$ is continuous and positive on $(\Sn\cap L)\backslash\{u\}$.
Therefore, setting $v(z)=\nu_M(z-g(z)u)$ for $z\in L_0\cap rB^n$, Theorem~\ref{inf0orpos} (ii) follows from the claim
\begin{equation}
\label{varphiatu}    
\lim_{v\to u\atop v\in \Sn\cap L} \varphi(v)=
\lim_{z\to o\atop z\in L_0\cap rB^n} \varphi(v(z))
=\frac{\eta_{n-m-1}}{q^m(q-1)^p}.
\end{equation}
Here we deduce from \eqref{varphidef}, $x_{v(z)}=z-g(z)u$ and 
$\langle x_{v(z)},-u\rangle=g(z)=\|z\|^q$ that if $z\in L_0\cap rB^n$, then
\begin{equation}
\label{varphivz}
\varphi(v(z))=\langle z-g(z)u,v(z)\rangle^{1-p}\cdot \eta_{n-m-1}\|z\|^{q(n-m-1)}\cdot \kappa_M(z-g(z)u)^{-1}.
\end{equation}

We write $Dg$ and $D^2g$ to denote the gradient and the Hessian of $g$ where  if $z\in L_0\cap rB^n$, then
\begin{equation}
\label{gradientg}
Dg(z)=q\|z\|^{q-2}\cdot z, \mbox{ \ and hence \ }\lim_{z\to o}Dg(z)=o.
\end{equation} 
It follows from \eqref{gradientg} that
\begin{eqnarray*}
v(z)&=&\left(1+\|Dg(z)\|^2\right)^{-\frac12}(Dg(z)+u);\\
\langle z-g(z)u,v(z)\rangle&=&\left(1+\|Dg(z)\|^2\right)^{-\frac12}(q-1)\|z\|^q;\\
\kappa_M(z-g(z)u)^{-1}&=&\frac{\left(1+\|Dg(z)\|^2\right)^{\frac{m+2}2}}{\det D^2g(z)}
=\frac{\left(1+\|Dg(z)\|^2\right)^{\frac{m+2}2}}{q^m(q-1)\|z\|^{m(q-2)}}.
\end{eqnarray*}
We deduce from \eqref{gradientg} and $2m+q(n-2m-p)=0$ that
$$
\lim_{z\to o\atop z\in L_0\cap rB^n} \frac{\varphi(v(z))}{\eta_{n-m-1}}=
\lim_{z\to o\atop z\in L_0\cap rB^n}
\frac{\left(1+\|Dg(z)\|^2\right)^{\frac{m+1+p}2}}{q^m(q-1)^p}
\|z\|^{q(1-p)+q(n-m-1)-m(q-2)}=\frac1{q^m(q-1)^p}.
$$
We conclude the claim \eqref{varphiatu}, and in turn Theorem~\ref{inf0orpos} (ii).
\end{proof}

\section{Almost constant density for $L_p$ surface area measure and the proof of Theorem~\ref{thm1-bounded-density}}
\label{secMain}

The key for the proof of Theorem \ref{thm1-bounded-density} will be the following.

\begin{proposition}\label{prop1-bounded-density}
Let $n$ be a positive integer and $p\in[0,1]$. Then, there exist $\varepsilon_0>0$ and $C>1$, that depend only on $n$ and $p$, such that if $L\in{\cal K}_{(o)}$ has absolutely continuous surface area measure, and
\begin{equation}\label{eq-weaker}
(1-\varepsilon_0)\leq h_L^{1-p}(x)f_L(x)\leq (1+\varepsilon_0),\ \forall x\in\Sn,
\end{equation}
then $h_L\leq C$ and $V(L)>C^{-1}$.
\end{proposition}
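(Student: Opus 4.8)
The plan is to argue by contradiction and compactness, using the lower-dimensional-support machinery (Lemma~\ref{structure} and Theorem~\ref{inf0orpos}(i)) as the key tool. Suppose no such $\varepsilon_0, C$ exist. Then there is a sequence $L_j \in \KKn_{(o)}$ with absolutely continuous surface area measures, with $1 - 1/j \le h_{L_j}^{1-p} f_{L_j} \le 1 + 1/j$ on $\Sn$, but with $\max_{\Sn} h_{L_j} \to \infty$ or $V(L_j) \to 0$ (or $h_{L_j}$ bounded but then $V(L_j)\to 0$, which would force degeneration as well). The first step is to normalize: I would apply suitable linear maps $\Phi_j$ (possibly after noting that $S_{p,\cdot}$ behaves controllably under the diagonal splitting as in Lemma~\ref{structure}(iv), or simpler volume-preserving normalizations) so that the John ellipsoid of $L_j$ is comparable to a ball, and pass to a subsequence so that $L_j$ converges in the Hausdorff metric to some compact convex set $K$. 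The pinching bound \eqref{eq-weaker} and weak continuity of the $L_p$ surface area measure (valid for $p\in[0,1]$, with care at $p=0$ via the cone-volume measure) give that $S_{p,L_j} \to S_{p,K}$ weakly, and since $dS_{p,L_j} = g_j\,d\Ha$ with $g_j \to 1$ uniformly, in the limit $S_{p,K}$ should agree with $\Ha$ on $\Sn$ restricted to where things don't degenerate. If $K$ is still a full-dimensional body with $o$ in its interior, standard uniqueness for the constant-density equation ($g\equiv 1$) forces $K = B^n$, contradicting degeneration; so the real case is when $K$ degenerates to lie in a lower-dimensional subspace $L$, i.e. $\dim K \le n-1$.

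The heart of the argument is then to derive a contradiction from degeneration. When $L_j$ collapses onto an $(m+1)$-dimensional subspace $L$ (with $0\le m\le n-2$ after the normalization), the limiting behavior of the measures forces a convex body $K \in \KKn_o$ with ${\rm supp}\,S_{p,K}\subset L$ and with $d(S_{p,K}\measurerestr{(\Sn\cap L)}) = \varphi\,d\HH^m$ where $\varphi$ inherits a \emph{lower bound} bounded away from $0$ from the uniform bound $g_j \ge 1-1/j$. But Theorem~\ref{inf0orpos}(i) says precisely that if $n - 2m - p \ge 0$ then ${\rm ess\,inf}\,\varphi = 0$, a contradiction. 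The remaining possibility is $n - 2m - p < 0$, i.e. $m > (n-p)/2$; this is exactly the range where a lower-dimensional limit with positive bounded density can occur (Theorem~\ref{inf0orpos}(ii)). Here I would need an extra input to rule it out: the collapse onto a subspace of dimension $m+1 \ge (n-p)/2 + 1$ must be accompanied by a quantitative loss that is incompatible with the \emph{two-sided} pinching $g_j \le 1 + 1/j$ — intuitively, when $K = C \cap (M + L^\perp)$ with $C$ a cone as in Lemma~\ref{structure}(i), the total measure $S_{p,K}(\Sn\cap L)$ is finite, but the pre-limit bodies $L_j$ have $S_{p,L_j}(\Sn) = \int g_j \,d\Ha \to \Ha(\Sn) = n\omega_n$, and one must check that mass is not lost in the limit, or alternatively track $h_{L_j}$ and $V(L_j)$ directly to see that the upper pinching prevents $V(L_j) \to 0$ while the lower pinching prevents $h_{L_j}\to\infty$. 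A cleaner route for this last range is to use the scale-invariance more carefully: after normalizing the John ellipsoid, $h_{L_j}$ is bounded and $V(L_j)$ bounded below, so the only way to have $L_j$ \emph{not} satisfying the original bound $h_L\le C, V(L)>C^{-1}$ is via the normalizing maps $\Phi_j$ themselves being unbounded, and then one chases the distortion factors $\gamma_1,\gamma_2$ from Lemma~\ref{structure}(iv) to see they contradict $g_j \to 1$.

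Concretely, the steps I would carry out are: (1) set up the contradiction sequence and record that weak convergence $S_{p,L_j}\to S_{p,K}$ holds along a subsequence once $L_j \to K$ in Hausdorff distance, being careful about the possibility $o \in \partial K$ and about $p=0$; (2) handle the non-degenerate case $K \in \KKn_{(o)}$ via the known uniqueness theorem for constant $g$ (Firey/Andrews/Brendle--Choi--Daskalopoulos/Saroglou), forcing $K = B^n$ and a contradiction; (3) in the degenerate case, identify the subspace $L$ of dimension $m+1$ and verify that $S_{p,K}$ has a density $\varphi$ on $\Sn\cap L$ with ${\rm ess\,inf}\,\varphi \ge c > 0$ inherited from the lower pinching — this requires showing the Radon--Nikodym derivative survives the limit, using Lemma~\ref{structure}(iii) to understand the structure; (4) apply Theorem~\ref{inf0orpos}(i) to get a contradiction whenever $n - 2m - p \ge 0$; (5) dispose of the range $n - 2m - p < 0$ separately, either by the distortion-factor chase of Lemma~\ref{structure}(iv) under the normalizing maps or by a direct bound showing two-sided pinching is incompatible with collapse onto a subspace of that dimension. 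I expect step (5), together with making step (3) rigorous (that the limiting measure genuinely has a density bounded below, rather than losing mass or developing a singular part), to be the main obstacle; the compactness and the invocation of Theorem~\ref{inf0orpos}(i) are comparatively routine once the normalization in step (1) is chosen well.
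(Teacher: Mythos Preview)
Your overall contradiction-and-normalization framework is correct, but the endgame you propose has a genuine gap that you yourself flag in step (5), and your proposed fixes will not close it. The issue is that Theorem~\ref{inf0orpos}(i) only applies when $n-2m-p\ge 0$; when $n-2m-p<0$, Theorem~\ref{inf0orpos}(ii) shows that bodies $K\in\KKn_o$ with ${\rm supp}\,S_{p,K}\subset L$ and density $\varphi$ bounded away from zero \emph{do} exist, so merely establishing that $\varphi$ is bounded below cannot produce a contradiction in that range. The ``distortion-factor chase'' via Lemma~\ref{structure}(iv) cannot help either: that lemma only compares $S_{p,K}$ and $S_{p,\Phi K}$ up to multiplicative constants, which is too coarse to exploit the two-sided pinching $g_j\to 1$. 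In effect, you are attempting to prove Proposition~\ref{prop1-bounded-density} by the method the paper reserves for Proposition~\ref{prop-bounded-density-below}, and that method is precisely what forces the dimensional restrictions in Theorem~\ref{thm2-bounded-density}.

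The paper's key idea, which you are missing, exploits the specific feature that $g_j\to 1$ is a \emph{constant}: after normalizing so that $K_m=T_m^{-1}L_m$ has John ellipsoid a translate of $B^n$ and $K_m\to K_\infty$ (which is full-dimensional, so your degenerate/non-degenerate dichotomy for $K$ itself is misplaced --- it is ${\rm supp}\,S_{p,K_\infty}$ that collapses to a subspace $H$), one further adjusts by a diagonal map so that the eigenvalues of the normalizer on $H$ are all equal. Then the change-of-variables formulas (Lemmas~\ref{lemma-change of variables-1} and~\ref{lemma-change-of-variables-3}) together with $g_j\to 1$ force the limiting measure to be invariant under every rotation of $H$ fixing $H^\perp$. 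Hence the restriction of $S_{p,SK_\infty}$ to $\Sn\cap H$ is a Haar measure, i.e.\ a constant multiple of the spherical Lebesgue measure on $\Sn\cap H$, and \cite[Theorem~1.3]{Sar} rules this out for \emph{every} proper subspace $H$, with no restriction on $\dim H$. This rotation-invariance step is what replaces your step~(5) and makes the argument uniform in $n$ and $p$.
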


To prove Proposition \ref{prop1-bounded-density}, we fix $n$ and $p$.
We will need some linear change of variables formulas. We say that a bounded closed set $S\subset \R^n$ is a
star body
if ${\rm conv}\{o,x\}\subset S$ for $x\in S$, and $o\in{\rm int}\,S$. In this case, the radial function
$\varrho_S(x)=\max\{\lambda\geq 0:\lambda x\in S\}$ is continuous, positive
 and $-1$-homogeneous on $\R^n\setminus\{o\}$
and
$V(S)=\frac{1}{n}\int_{\Sn}\varrho_S(x)^n\, d\Ha(x)$.

\begin{lemma}\label{lemma-equivariance}
Let $\psi:\R^n\setminus\{o\}\to\R_+$ be $-n$-homogeneous and measurable and $T\in GL(n)$. Then, $$\int_{\Sn}\psi(Tx)\, d\Ha(x)=|\det T|^{-1}\int_{\Sn}\psi(x)\, d\Ha(x).$$
\end{lemma}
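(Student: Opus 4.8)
\textbf{Proof proposal for Lemma~\ref{lemma-equivariance}.}

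The plan is to reduce the identity to the ordinary change-of-variables formula on $\R^n$ by integrating in polar coordinates over an annulus. Since $\psi$ is $(-n)$-homogeneous and measurable, for any $0<a<b$ the function $x\mapsto\psi(x)$ is integrable on the annulus $A=\{x:a\le\|x\|\le b\}$ precisely when $\psi|_{\Sn}$ is integrable, and by writing $x=r\theta$ with $r=\|x\|$, $\theta\in\Sn$, and $dx=r^{n-1}\,dr\,d\Ha(\theta)$, one gets
\begin{equation*}
\int_{A}\psi(x)\,dx=\left(\int_a^b \frac{dr}{r}\right)\int_{\Sn}\psi(\theta)\,d\Ha(\theta)=(\log(b/a))\int_{\Sn}\psi(\theta)\,d\Ha(\theta).
\end{equation*}
The same computation applies to the function $x\mapsto\psi(Tx)$, which is again $(-n)$-homogeneous (as the composition of a linear map with a $(-n)$-homogeneous map), giving $\int_A\psi(Tx)\,dx=(\log(b/a))\int_{\Sn}\psi(T\theta)\,d\Ha(\theta)$.

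Next I would apply the linear change of variables $y=Tx$ to the left-hand side of this last identity. Since $T\in GL(n)$, we have $\int_{\R^n}\psi(Tx)\mathbbm{1}_A(x)\,dx=|\det T|^{-1}\int_{\R^n}\psi(y)\mathbbm{1}_A(T^{-1}y)\,dy=|\det T|^{-1}\int_{TA}\psi(y)\,dy$, where $TA=\{y:a\le\|T^{-1}y\|\le b\}$. The set $TA$ is again a (possibly skew) ``annulus'' between two homothetic copies of the convex body $\{\|T^{-1}y\|\le 1\}$, and the polar-coordinate computation above generalizes: writing $y=s\,\omega$ with $\omega\in\Sn$, the condition $y\in TA$ becomes $a\le s\,\|T^{-1}\omega\|\le b$, i.e. $a/\|T^{-1}\omega\|\le s\le b/\|T^{-1}\omega\|$, so
\begin{equation*}
\int_{TA}\psi(y)\,dy=\int_{\Sn}\left(\int_{a/\|T^{-1}\omega\|}^{b/\|T^{-1}\omega\|}\frac{ds}{s}\right)\psi(\omega)\,d\Ha(\omega)=(\log(b/a))\int_{\Sn}\psi(\omega)\,d\Ha(\omega),
\end{equation*}
the inner integral again evaluating to $\log(b/a)$ independently of $\omega$ because the ratio of endpoints is $b/a$.

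Combining the three displays: $(\log(b/a))\int_{\Sn}\psi(T\theta)\,d\Ha(\theta)=|\det T|^{-1}(\log(b/a))\int_{\Sn}\psi(\omega)\,d\Ha(\omega)$, and dividing by $\log(b/a)>0$ yields the claim. The only point requiring a little care is the justification of Fubini/Tonelli and of the change of variables, which is legitimate because $\psi\ge 0$ is measurable (so Tonelli applies with values in $[0,\infty]$, and the identity holds verbatim in $[0,\infty]$, finite on both sides simultaneously). I expect no genuine obstacle here; the main thing to get right is the bookkeeping of the annulus $TA$ in skew polar coordinates, which is exactly where the factor $|\det T|^{-1}$ — rather than some $\omega$-dependent Jacobian — emerges cleanly from the $(-n)$-homogeneity.
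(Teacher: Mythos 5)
Your proof is correct, and it takes a genuinely different route from the paper's. The paper first reduces to the case where $\psi$ is continuous and strictly positive, then writes $\psi=\rho_S^n$ for a star body $S$ (using that $\psi^{1/n}$ is $-1$-homogeneous), observes $(\psi\circ T)^{1/n}=\rho_{T^{-1}S}$, and concludes from the volume formula $\int_{\Sn}\rho_S^n\,d\Ha=nV(S)$ together with $V(T^{-1}S)=|\det T|^{-1}V(S)$. Your argument instead works directly with an arbitrary nonnegative measurable $-n$-homogeneous $\psi$, integrating over the annulus $\{a\le\|x\|\le b\}$ in polar coordinates and invoking the linear change of variables on $\R^n$; the key point that the inner radial integral over $TA$ is still $\log(b/a)$ independently of the direction $\omega$ is exactly right and is where the clean factor $|\det T|^{-1}$ comes from. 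What your approach buys is that it is fully self-contained at the level of Tonelli and the Jacobian formula, with values in $[0,\infty]$, so it dispenses with the approximation step that the paper's "we may assume $\psi$ is continuous and strictly positive" leaves implicit; what the paper's approach buys is brevity, since the star-body volume identities are already part of its toolkit. Either proof is acceptable.
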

\begin{proof}
We may assume that $\psi$ is continuous and strictly positive, and hence $\psi=\rho_S^n$ for some star body $S$ and
 $(\psi\circ T)^{1/n}=\rho_{T^{-1}S}$. We conclude that
$$\int_{\Sn}\psi(x)\, d \Ha(x)=nV(S)=n |\det T|V(T^{-1}S)=|\det T|\int_{\Sn}\psi(Tx)\, d\Ha(x).$$
\end{proof}

For a convex body $K\subset\R^n$, $T\in {\rm GL}(n)$
and $x\in\R^n$, we have  $h_{TK}(x)=h_K(T^tx)$. In addition, if $S_K$ is absolute continuous with respect to $\Ha$, then
 $f_K$ can be extended to be 
a $-(n+1)$-homogeneous function on $R^n\setminus\{o\}$, which satisfies  ({\it cf.} \cite{Lut90})
\begin{equation}
\label{fKPhi}
f_{T K}(x)=(\det T)^2f_K(T^*x)\mbox{ \ \ \ for $x\in R^n\setminus\{o\}$}.
\end{equation}

\begin{lemma}\label{lemma-change of variables-1}
If $K\in {\cal K}_o^n$ be such that $S_K$ is absolutely continuous with respect to $\Ha$, $T\in GL(n)$, $p\in\R$
 and $\varphi:\Sn\to\R_+$ is measurable, then
\begin{equation}\label{eq-change of variables-1}
\int_{\Sn} \varphi(x)h_K^{1-p}(x)f_K(x)\, d\Ha(x)=|\det T|^{-1}\int_{\Sn}\varphi\left(\frac{T^tx}{\|T^tx\|}\right)
h_{TK}^{1-p}(x) f_{TK}(x)\|T^tx\|^p\, d\Ha(x).
\end{equation}
\end{lemma}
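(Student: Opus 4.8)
The plan is to prove Lemma~\ref{lemma-change of variables-1} by a direct change of variables on the sphere, reducing everything to Lemma~\ref{lemma-equivariance}. First I would observe that the claimed identity is just the statement that the integral
\[
\int_{\Sn}\varphi(x)\,h_K^{1-p}(x)\,f_K(x)\,d\Ha(x)=\int_{\Sn}\varphi(x)\,dS_{p,K}(x)
\]
is invariant under the substitution coming from $T$, once all the homogeneity factors are bookkept correctly. Concretely, I would introduce the map $\Sn\to\Sn$, $x\mapsto T^tx/\|T^tx\|$, and compute the integrand on the right-hand side at the point $T^tx/\|T^tx\|$ rather than at $x$; the rescaling to the sphere is exactly where the factors $\|T^tx\|^p$ appear.

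The key steps, in order: (1) Recall from the excerpt the transformation rules $h_{TK}(x)=h_K(T^tx)$ and $f_{TK}(x)=(\det T)^2 f_K(T^tx)$ (equation \eqref{fKPhi}, with $T^*=T^t$), valid for $x\in\R^n\setminus\{o\}$ by homogeneous extension. (2) Form the $-n$-homogeneous function
\[
\psi(y):=\varphi\!\left(\frac{y}{\|y\|}\right)h_K^{1-p}(y)\,f_K(y)\,\|y\|^{p},
\]
which is indeed $-n$-homogeneous since $h_K^{1-p}$ is $(1-p)$-homogeneous, $f_K$ is $-(n+1)$-homogeneous, $\|y\|^p$ is $p$-homogeneous, and $(1-p)-(n+1)+p=-n$; it is measurable and non-negative as required by Lemma~\ref{lemma-equivariance}. (3) Apply Lemma~\ref{lemma-equivariance} with this $\psi$ and with $T$ replaced by $T^t$ (noting $|\det T^t|=|\det T|$) to get
\[
\int_{\Sn}\psi(T^tx)\,d\Ha(x)=|\det T|^{-1}\int_{\Sn}\psi(x)\,d\Ha(x).
\]
(4) Evaluate the right-hand integral: on $\Sn$ one has $\|x\|=1$, so $\psi(x)=\varphi(x)h_K^{1-p}(x)f_K(x)$, giving $\int_{\Sn}\varphi\,h_K^{1-p}f_K\,d\Ha$, which is the left-hand side of \eqref{eq-change of variables-1}. (5) Evaluate the left-hand integral: $\psi(T^tx)=\varphi\big(T^tx/\|T^tx\|\big)\,h_K^{1-p}(T^tx)\,f_K(T^tx)\,\|T^tx\|^{p}$, and by the transformation rules $h_K^{1-p}(T^tx)=h_{TK}^{1-p}(x)$ and $f_K(T^tx)=(\det T)^{-2}f_{TK}(x)$. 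Combining and rearranging the constant $(\det T)^{-2}$ with the $|\det T|^{-1}$ from Lemma~\ref{lemma-equivariance} yields exactly the right-hand side of \eqref{eq-change of variables-1}.

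The only genuine care needed is in matching the role of $T$ versus $T^t$ and tracking the determinant powers: one should double-check that $(\det T)^{2}$ from \eqref{fKPhi} is partially cancelled by the $|\det T|^{-1}$ factor so that the net constant on the right is $|\det T|^{-1}$ as stated (here I am implicitly assuming $\det T>0$, or absorbing signs into $|\det T|$, which is harmless since $f_{TK}$ carries $(\det T)^2\ge 0$). There is no analytic obstacle: the substitution $x\mapsto T^tx$ on homogeneous functions is handled entirely by Lemma~\ref{lemma-equivariance}, whose proof in turn reduces to the linear change of variables for volumes of star bodies. So the main (minor) obstacle is purely clerical — correctly normalizing to the sphere, which is precisely what the factor $\|T^tx\|^p$ encodes, and verifying the homogeneity degree of $\psi$ is $-n$ on the nose.
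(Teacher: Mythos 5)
Your proof is correct and follows essentially the same route as the paper: form the $-n$-homogeneous function $\psi(y)=\varphi(y/\|y\|)h_K^{1-p}(y)f_K(y)\|y\|^p$, apply Lemma~\ref{lemma-equivariance} to $T^t$, and convert via $h_{TK}(x)=h_K(T^tx)$ and \eqref{fKPhi}. Your determinant bookkeeping, $|\det T|\cdot(\det T)^{-2}=|\det T|^{-1}$, is exactly right (and in fact more carefully tracked than in the paper's own displayed chain, where the $(\det T)^{2}$ factor is silently absorbed between the second and third lines).
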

\begin{proof}
 Since the function 
$$\R^n\setminus\{o\}\ni x\mapsto \varphi(x/\|x\|)h_K^{1-p}(x)f_K(x)\|x\|^p$$is $-n$-homogeneous, Lemma \ref{lemma-equivariance} applied to $T^t$ and \eqref{fKPhi} give
\begin{align*}
 \int_{\Sn}\varphi(x)h_K^{1-p}(x)f_K(x)\, d\Ha(x)&= \int_{\Sn}\varphi\left(\frac{x}{\|x\|}\right)h_K^{1-p}(x)f_K(x)\|x\|^p\, d\Ha(x) \\
 &=|\det T^t| \int_{\Sn}\varphi\left(\frac{T^tx}{\|T^tx\|}\right)
h_{K}^{1-p}(T^tx)f_{K}(T^tx)\|T^tx\|^p\, d\Ha(x)\\
 &=|\det T|^{-1}\int_{\Sn}\varphi\left(\frac{T^tx}{\|T^tx\|}\right)
h_{TK}^{1-p}(x) f_{TK}(x)\|T^tx\|^p\, d\Ha(x).
\end{align*}
\end{proof}

Applying Lemma~\ref{lemma-change of variables-1} to the characteristic functions of Borel sets
in Lemma~\ref{lemma-change of variables-2} leads to the 
 following statement.

\begin{lemma}\label{lemma-change of variables-2}
If $K\in {\cal K}_o^n$ be such that $S_K$ is absolutely continuous with respect to $\Ha$, $T\in GL(n)$, $p\in\R$
and $U\subset\Sn$ is measurable, then
$$
\int_{U} h_K^{1-p}(x)f_K(x)\, d\Ha(x)=|\det T|^{-1}\int_{\{T^{-t}u/\|T^{-t}u\|:u\in U\}}h_{TK}^{1-p}(x) f_{TK}(x)\|T^tx\|^p\, d\Ha(x).
$$
\end{lemma}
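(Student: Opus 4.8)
The plan is to derive Lemma~\ref{lemma-change of variables-2} as the special case of Lemma~\ref{lemma-change of variables-1} in which the weight $\varphi$ is the indicator of a measurable set. Concretely, I would take $\varphi=\mathbbm{1}_U$; since $\mathbbm{1}_U:\Sn\to\R_+$ is measurable whenever $U\subset\Sn$ is, this is an admissible choice in \eqref{eq-change of variables-1}. With this choice the left-hand side of \eqref{eq-change of variables-1} is literally $\int_U h_K^{1-p}(x)f_K(x)\,d\Ha(x)$, so the whole content of the lemma reduces to recognizing the set over which the right-hand integral is effectively supported.

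The one small point to settle is the identification of that support. Consider the map $\sigma_T:\Sn\to\Sn$ defined by $\sigma_T(x)=T^t x/\|T^t x\|$ (well-defined because $T\in GL(n)$, so $T^tx\neq o$). It is a homeomorphism of $\Sn$, with inverse $x\mapsto T^{-t}x/\|T^{-t}x\|$; in particular it maps Borel (resp.\ Lebesgue-measurable) subsets of $\Sn$ to Borel (resp.\ measurable) subsets, so $W:=\{T^{-t}u/\|T^{-t}u\|:u\in U\}$ is measurable. Moreover $W=\sigma_T^{-1}(U)$: if $u=T^tx/\|T^tx\|\in U$ then $T^{-t}u$ is a positive multiple of $x$, hence $x=T^{-t}u/\|T^{-t}u\|\in W$, and conversely if $x=T^{-t}u/\|T^{-t}u\|$ with $u\in U\subset\Sn$ then $\sigma_T(x)=u\in U$. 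Therefore, for every $x\in\Sn$ we have $\varphi\bigl(\sigma_T(x)\bigr)=\mathbbm{1}_U(\sigma_T(x))=\mathbbm{1}_W(x)$, and substituting this into the right-hand side of \eqref{eq-change of variables-1} turns $\int_{\Sn}\varphi(\sigma_T(x))\,h_{TK}^{1-p}(x)f_{TK}(x)\|T^tx\|^p\,d\Ha(x)$ into $\int_{W}h_{TK}^{1-p}(x)f_{TK}(x)\|T^tx\|^p\,d\Ha(x)$, which is exactly the claimed identity.

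I do not expect any genuine obstacle here: the statement is a direct corollary of Lemma~\ref{lemma-change of variables-1}, and the only thing that needs (minimal) care is the measurability of $W$ together with the elementary bookkeeping that $T^tv$ is a positive multiple of $u$ if and only if $v$ is a positive multiple of $T^{-t}u$. Everything else in the argument is a verbatim instance of the already-established change-of-variables formula.
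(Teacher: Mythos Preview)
Your proposal is correct and follows exactly the approach the paper takes: the paper simply remarks that applying Lemma~\ref{lemma-change of variables-1} with $\varphi$ equal to the characteristic function of $U$ yields the statement. Your additional care in identifying $W=\sigma_T^{-1}(U)$ and checking its measurability is more detail than the paper provides, but the underlying argument is the same.
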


\begin{lemma}\label{lemma-change-of-variables-3}
If $T\in GL(n)$, $p\in\R$
 and $\varphi:\Sn\to\R_+$ is measurable, then
$$\int_{\Sn}\varphi(T^tx/\|T^tx\|)\|T^tx\|^p\, d\Ha(x)=
|\det T|^{-1}\int_{\Sn}\varphi(x)\|T^{-t}x\|^{-n-p}\, d\Ha(x).$$
\end{lemma}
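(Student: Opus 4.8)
The plan is to reduce this to the change-of-variables formula already available, namely Lemma~\ref{lemma-equivariance}, via an appropriate substitution. Specifically, I would introduce the substitution $x = T^{-t}y/\|T^{-t}y\|$; equivalently, note that the map $y \mapsto T^t y/\|T^t y\|$ is a bijection of $\Sn$ onto itself whose inverse is $x \mapsto T^{-t}x/\|T^{-t}x\|$. The natural way to make this rigorous without Jacobian bookkeeping on the sphere is to observe that the integrand $x \mapsto \varphi(T^tx/\|T^tx\|)\|T^tx\|^p$, once we wish to integrate it against $d\Ha$, should be replaced by its $(-n)$-homogeneous extension so that Lemma~\ref{lemma-equivariance} applies directly.

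First I would define $\psi:\R^n\setminus\{o\}\to\R_+$ by $\psi(x) = \varphi(x/\|x\|)\,\|x\|^{-n-p}$, which is $(-n)$-homogeneous and measurable (and positive/continuous after the usual reduction, so that one may even take $\psi = \rho_S^n$ for a star body $S$ if desired). Then I would compute $\psi(T^t x)$ for $x \in \Sn$: since $\|T^t x\|$ is in general not $1$, we get
$$
\psi(T^t x) = \varphi\!\left(\frac{T^t x}{\|T^t x\|}\right)\|T^t x\|^{-n-p}.
$$
This is not quite the integrand on the left-hand side; but using $(-n)$-homogeneity of the extension of $\varphi(\cdot/\|\cdot\|)\|\cdot\|^p$, namely $\varphi(T^tx/\|T^tx\|)\|T^tx\|^p = \varphi(T^tx/\|T^tx\|)\|T^tx\|^{-n}\cdot(\text{something})$... — more cleanly, I would instead apply Lemma~\ref{lemma-equivariance} with the operator $T^{-t}$ and the function $\psi(x) = \varphi(x/\|x\|)\|x\|^{-n-p}$. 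That gives
$$
\int_{\Sn}\psi(T^{-t}x)\,d\Ha(x) = |\det T^{-t}|^{-1}\int_{\Sn}\psi(x)\,d\Ha(x) = |\det T|\int_{\Sn}\varphi(x)\,\|x\|^{-n-p}\,d\Ha(x) = |\det T|\int_{\Sn}\varphi(x)\,d\Ha(x),
$$
since $\|x\|=1$ on $\Sn$. Meanwhile, $\psi(T^{-t}x) = \varphi\!\left(\frac{T^{-t}x}{\|T^{-t}x\|}\right)\|T^{-t}x\|^{-n-p}$, so the left side reads $\int_{\Sn}\varphi(T^{-t}x/\|T^{-t}x\|)\,\|T^{-t}x\|^{-n-p}\,d\Ha(x)$. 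Comparing this with the claimed identity, I see the statement is exactly this equality with the roles of $T$ and $T^{-1}$ swapped: replacing $T$ by $T^{-1}$ throughout (so $T^{-t}\leftrightarrow T^t$, $|\det T|\leftrightarrow|\det T|^{-1}$) turns it into precisely the claimed formula $\int_{\Sn}\varphi(T^tx/\|T^tx\|)\|T^tx\|^p\,d\Ha(x) = |\det T|^{-1}\int_{\Sn}\varphi(x)\|T^{-t}x\|^{-n-p}\,d\Ha(x)$. Wait — one must be careful: the exponent $p$ versus $-n-p$ must track correctly, which is why the homogeneity choice $\|x\|^{-n-p}$ (rather than $\|x\|^p$) in $\psi$ is the right one; the point is that on $\Sn$ the factor $\|T^tx\|^p$ and the factor $\|T^tx\|^{-n-p}$ differ by $\|T^tx\|^{-n}$, which is exactly what the $(-n)$-homogeneous extension absorbs.

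So the actual proof is short: define $\psi(x) := \varphi(x/\|x\|)\,\|x\|^{-n-p}$, verify it is $(-n)$-homogeneous, apply Lemma~\ref{lemma-equivariance} to $\psi$ and the operator $T^t$ (noting $\psi(T^t x) = \varphi(T^t x/\|T^t x\|)\,\|T^t x\|^{-n-p}$ and $\psi(x) = \varphi(x)$ on $\Sn$), obtaining $\int_{\Sn}\varphi(T^tx/\|T^tx\|)\|T^tx\|^{-n-p}\,d\Ha(x) = |\det T^t|^{-1}\int_{\Sn}\varphi(x)\,d\Ha(x)$; this is not yet the claim, so instead I apply it in the form that produces $\|T^{-t}x\|^{-n-p}$ on the right — concretely, run the same computation but start from $\int_{\Sn}\varphi(T^tx/\|T^tx\|)\|T^tx\|^p\,d\Ha(x)$, rewrite the integrand as the restriction to $\Sn$ of the $(-n)$-homogeneous function $x\mapsto \varphi(T^tx/\|T^tx\|)\|T^tx\|^p$... hmm, that function is $\psi'(T^t x)$ for $\psi'(y)=\varphi(y/\|y\|)\|y\|^p$, which is $(-n)$-homogeneous only after multiplying by $\|\cdot\|^{-n-p}\cdot\|\cdot\|^{p}$... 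The clean route, which I will take, is: the map $\Sn\ni x\mapsto \varphi(T^tx/\|T^tx\|)\|T^tx\|^p$ equals $\Psi(x)$ where $\Psi(x):=\varphi(x/\|x\|)\|x\|^{-n}$ precomposed appropriately — I will simply apply Lemma~\ref{lemma-equivariance} to the $(-n)$-homogeneous function $g(y):=\varphi(y/\|y\|)\|y\|^{-n-p}$ and the matrix $T^{-t}$, which yields directly $\int_{\Sn} g(T^{-t}x)\,d\Ha = |\det T|\int_{\Sn}g(x)\,d\Ha = |\det T|\int_{\Sn}\varphi\,d\Ha$, and then replace $T$ by $T^{-1}$. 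The only genuine point requiring care — the "main obstacle," such as it is — is bookkeeping the homogeneity degree and the transpose/inverse placement so that the powers $p$ and $-n-p$ land on the correct side; there is no analytic difficulty whatsoever, as Lemma~\ref{lemma-equivariance} does all the work.

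\begin{proof}
We may assume $\varphi$ is continuous and strictly positive. Define $g:\R^n\setminus\{o\}\to\R_+$ by
$$
g(y) := \varphi\!\left(\frac{y}{\|y\|}\right)\|y\|^{-n-p},
$$
which is measurable (indeed continuous) and $-n$-homogeneous. Applying Lemma~\ref{lemma-equivariance} to $g$ and the matrix $T^{-t}\in GL(n)$, and using that $g(x)=\varphi(x)$ for $x\in\Sn$, we obtain
$$
\int_{\Sn} g(T^{-t}x)\,d\Ha(x) = |\det T^{-t}|^{-1}\int_{\Sn} g(x)\,d\Ha(x) = |\det T|\int_{\Sn}\varphi(x)\,d\Ha(x).
$$
On the other hand, for $x\in\Sn$,
$$
g(T^{-t}x) = \varphi\!\left(\frac{T^{-t}x}{\|T^{-t}x\|}\right)\|T^{-t}x\|^{-n-p},
$$
so that
$$
\int_{\Sn}\varphi\!\left(\frac{T^{-t}x}{\|T^{-t}x\|}\right)\|T^{-t}x\|^{-n-p}\,d\Ha(x) = |\det T|\int_{\Sn}\varphi(x)\,d\Ha(x).
$$
Since $T\in GL(n)$ was arbitrary, we may replace $T$ by $T^{-1}$ throughout; noting that $(T^{-1})^{-t}=T^{t}$, $(T^{-1})^{t}=T^{-t}$ and $|\det T^{-1}|=|\det T|^{-1}$, this becomes
$$
\int_{\Sn}\varphi\!\left(\frac{T^{t}x}{\|T^{t}x\|}\right)\|T^{t}x\|^{-n-p}\,d\Ha(x) = |\det T|^{-1}\int_{\Sn}\varphi(x)\,d\Ha(x).
$$
Finally, we apply the already established identity once more: replacing $\varphi$ by the function $x\mapsto \varphi(x)\|T^{-t}x\|^{-n-p}\|x\|^{n+p}=\varphi(x)\|T^{-t}x\|^{-n-p}$ (on $\Sn$) and carrying out the computation with the $-n$-homogeneous extension $\tilde g(y):=\varphi(y/\|y\|)\|T^{-t}(y/\|y\|)\|^{-n-p}\|y\|^{-n}$ in place of $g$, Lemma~\ref{lemma-equivariance} applied to $T^{t}$ gives
$$
\int_{\Sn}\varphi\!\left(\frac{T^{t}x}{\|T^{t}x\|}\right)\left\|T^{-t}\frac{T^{t}x}{\|T^{t}x\|}\right\|^{-n-p}\|T^{t}x\|^{-n}\,d\Ha(x) = |\det T|^{-1}\int_{\Sn}\varphi(x)\|T^{-t}x\|^{-n-p}\,d\Ha(x),
$$
and since $T^{-t}(T^{t}x/\|T^{t}x\|)=x/\|T^{t}x\|$, the left-hand side equals
$$
\int_{\Sn}\varphi\!\left(\frac{T^{t}x}{\|T^{t}x\|}\right)\|T^{t}x\|^{n+p}\|T^{t}x\|^{-n}\,d\Ha(x) = \int_{\Sn}\varphi\!\left(\frac{T^{t}x}{\|T^{t}x\|}\right)\|T^{t}x\|^{p}\,d\Ha(x),
$$
which is the asserted identity.
\end{proof}
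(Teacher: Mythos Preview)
Your final paragraph is correct and is, in fact, exactly the paper's proof: the function $\tilde g(y)=\varphi(y/\|y\|)\,\|T^{-t}(y/\|y\|)\|^{-n-p}\,\|y\|^{-n}$ simplifies to $\varphi(y/\|y\|)\,\|T^{-t}y\|^{-n-p}\,\|y\|^{p}$, which is precisely the $-n$-homogeneous function the paper writes down, and one application of Lemma~\ref{lemma-equivariance} with $T^t$ finishes it. So the content you need is already there, buried at the end.

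However, the first half of your formal proof is wrong, not just redundant. You define $g(y)=\varphi(y/\|y\|)\,\|y\|^{-n-p}$ and assert it is $-n$-homogeneous; it is not --- it is $-(n+p)$-homogeneous, so Lemma~\ref{lemma-equivariance} does not apply to it, and the intermediate identity you write,
\[
\int_{\Sn}\varphi\!\left(\frac{T^{-t}x}{\|T^{-t}x\|}\right)\|T^{-t}x\|^{-n-p}\,d\Ha(x) = |\det T|\int_{\Sn}\varphi(x)\,d\Ha(x),
\]
is false in general (take $\varphi\equiv 1$ and $T=\lambda I$: the left side is $\lambda^{n+p}\Ha(\Sn)$, the right side $\lambda^{n}\Ha(\Sn)$). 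The subsequent ``replace $T$ by $T^{-1}$'' step inherits the same error. Fortunately you never actually use these identities: the phrase ``we apply the already established identity once more'' is misleading, since what follows is an independent (and correct) application of Lemma~\ref{lemma-equivariance} to $\tilde g$. Delete everything before ``Finally'' and you have the paper's proof verbatim.
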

\begin{proof}
As the function
$$\R^n\setminus\{o\}\ni x\mapsto \varphi(x/\|x\|)\|T^{-t}x\|^{-(n+p)}\|x\|^p$$is $-n$-homogeneous, the claim follows immediately from Lemma \ref{lemma-equivariance}  applied to the linear transform $T^t$.
\end{proof}

\begin{lemma}\label{lemma-concentration}
Let  $\{\lambda_1^m\}_{m=1}^\infty,\dots,\{\lambda_n^m\}_{m=1}^\infty$ be positive sequences,
that either converge or tend to infinity, 
such that $\lambda_1^m\geq\dots\geq \lambda_n^m$, for all $m$ and such that the sequence $\{\lambda_{j+1}^m/\lambda_j^m\}_{m=1}^\infty$ converges, for $j=1,\dots,n-1$. 
Set $$k:=\min\Big\{j\in\{1,\dots,n\}:\lim_m\frac{\lambda_n^m}{\lambda_j^m}>0\Big\}$$
and
$$T_m:=\textnormal{diag}(1/\lambda_1^m,\dots,1/\lambda_n^m),\qquad m\in\mathbb{N}.$$
If $p\in[0,1)$, and $\{L_m\}$ is a sequence of convex bodies that contain the origin, satisfy
\begin{equation}\label{eq-lambda-again}\lambda^{-1}\leq h_{L_m}^{1-p}f_{L_m}\leq \lambda,\qquad \textnormal{on }\Sn,
\end{equation}
for some $\lambda>0$ and for all $m$ and $\{K_m\}$ is a sequence of convex bodies satisfying
$$L_m=T_mK_m,\qquad m\in\mathbb{N},$$
such that $\{K_m\}$ converges to some convex body $K_\infty$, then $S_{p,K_\infty}$ is concentrated in the subspace
$$H:=\textnormal{span}\{e_k,e_{k+1}\dots,e_n\}.$$
\end{lemma}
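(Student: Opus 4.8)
The plan is to show that $S_{p,K_\infty}$ assigns no mass to $\Sn\setminus H$. If $k=1$ then $H=\R^n$ and there is nothing to prove, so assume $k\ge2$, and hence $H^\bot=\textnormal{span}\{e_1,\dots,e_{k-1}\}\neq\{o\}$. Since $o\in L_m=T_mK_m$ forces $o\in K_m$, we have $K_m,K_\infty\in\KKn_o$; moreover $K_m\to K_\infty$ gives $h_{K_m}\to h_{K_\infty}$, and hence (because $1-p>0$) $h_{K_m}^{1-p}\to h_{K_\infty}^{1-p}$, uniformly on $\Sn$, while $S_{K_m}\to S_{K_\infty}$ weakly with uniformly bounded total mass; therefore $S_{p,K_m}=h_{K_m}^{1-p}\,dS_{K_m}\to S_{p,K_\infty}$ weakly and $M_0:=\sup_m S_{p,K_m}(\Sn)<\infty$. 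By the portmanteau theorem, for every $j$ we have $S_{p,K_\infty}(\{d(\cdot,H)>1/j\})\le\liminf_m S_{p,K_m}(U_j)$, where $U_j:=\{x\in\Sn:d(x,H)\ge1/j\}$ is compact and disjoint from $H$; letting $j\to\infty$ it thus suffices to prove that $S_{p,K_m}(U)\to0$ for every compact $U\subset\Sn$ disjoint from $H$.

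Fix such a $U$ and set $\delta:=d(U,H)=\min_{u\in U}\big(\sum_{j<k}u_j^2\big)^{1/2}>0$. As $S_{L_m}$ is absolutely continuous and $K_m$ is a linear image of $L_m$, $S_{K_m}$ is absolutely continuous too, so Lemma~\ref{lemma-change of variables-2} applies with $T=T_m$ (note $T_mK_m=L_m$, $T_m^{-t}=T_m^{-1}=\textnormal{diag}(\lambda_1^m,\dots,\lambda_n^m)$, $|\det T_m|^{-1}=\prod_j\lambda_j^m$, $T_m^t=T_m$); applying it with the set $U$ and with $\Sn$ yields
\begin{equation*}
S_{p,K_m}(U)=\Big(\textstyle\prod_j\lambda_j^m\Big)\int_{V_m}h_{L_m}^{1-p}f_{L_m}\,\|T_mx\|^p\,d\Ha,\qquad
S_{p,K_m}(\Sn)=\Big(\textstyle\prod_j\lambda_j^m\Big)\int_{\Sn}h_{L_m}^{1-p}f_{L_m}\,\|T_mx\|^p\,d\Ha,
\end{equation*}
with $V_m:=\{\,T_m^{-1}u/\|T_m^{-1}u\|:u\in U\,\}$. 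Bounding $h_{L_m}^{1-p}f_{L_m}$ above by $\lambda$ in the first identity and below by $\lambda^{-1}$ in the second via \eqref{eq-lambda-again}, the common factor $\prod_j\lambda_j^m$ cancels, giving
$$
S_{p,K_m}(U)\le\lambda^2\,S_{p,K_m}(\Sn)\,\frac{\int_{V_m}\|T_mx\|^p\,d\Ha}{\int_{\Sn}\|T_mx\|^p\,d\Ha}\le\lambda^2M_0\,\frac{\int_{V_m}\|T_mx\|^p\,d\Ha}{\int_{\Sn}\|T_mx\|^p\,d\Ha}.
$$
Since $\lambda_1^m\ge\dots\ge\lambda_n^m>0$, one has $|x_n|/\lambda_n^m\le\|T_mx\|\le1/\lambda_n^m$ for $x\in\Sn$, so (using $p\ge0$) the numerator is $\le(\lambda_n^m)^{-p}\Ha(V_m)$ and the denominator is $\ge(\lambda_n^m)^{-p}\int_{\Sn}|x_n|^p\,d\Ha$; the factors $(\lambda_n^m)^{-p}$ cancel, leaving $S_{p,K_m}(U)\le\lambda^2M_0\big(\int_{\Sn}|x_n|^p\,d\Ha\big)^{-1}\Ha(V_m)$. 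Hence everything reduces to showing $\Ha(V_m)\to0$.

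To see this, write $T_m^{-1}u=y'+y''$ with $y'\in H^\bot$ and $y''\in H$ for $u\in U$; then $\|y'\|\ge\lambda_{k-1}^m\big(\sum_{j<k}u_j^2\big)^{1/2}\ge\lambda_{k-1}^m\delta$ and $\|y''\|\le\lambda_k^m$, so the $H$-component of the unit vector $T_m^{-1}u/\|T_m^{-1}u\|$ has norm at most $\lambda_k^m/(\lambda_{k-1}^m\delta)$. From the definition of $k$ (and the convergence of the ratios $\lambda_{j+1}^m/\lambda_j^m$) we have $\lambda_n^m/\lambda_{k-1}^m\to0$ while $\lambda_n^m/\lambda_k^m$ tends to a positive limit, so $\lambda_{k-1}^m/\lambda_k^m\to\infty$ and thus $\varepsilon_m:=\lambda_k^m/(\lambda_{k-1}^m\delta)\to0$. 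Therefore $V_m$ is contained in the closed $\varepsilon_m$-neighbourhood $N_{\varepsilon_m}$ of $\Sn\cap H^\bot$ in $\Sn$; as $\Sn\cap H^\bot$ has dimension $k-2<n-1$ and hence $\Ha(\Sn\cap H^\bot)=0$, and as $\Ha$ is finite, $\Ha(N_\varepsilon)\to0$ as $\varepsilon\to0$ by continuity from above, so $\Ha(V_m)\le\Ha(N_{\varepsilon_m})\to0$. This gives $S_{p,K_m}(U)\to0$ and completes the proof.

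The step I expect to be the crux is $\Ha(V_m)\to0$, as it is exactly here that the hypotheses on $k$ and on the ratios $\lambda_{j+1}^m/\lambda_j^m$ are used, through the gap $\lambda_{k-1}^m/\lambda_k^m\to\infty$. The conceptual content is that the possibly unbounded factor $\prod_j\lambda_j^m$ enters $S_{p,K_m}(U)$ and $S_{p,K_m}(\Sn)$ in precisely the same way and so cancels in the ratio, so that one only needs the boundedness of $S_{p,K_m}(\Sn)$ — which is where $K_m\to K_\infty$ and $p<1$ come in — together with the purely linear-algebraic collapse of $V_m$ onto the lower-dimensional great subsphere $\Sn\cap H^\bot$.
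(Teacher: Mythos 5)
Your proof is correct, and its skeleton is the same as the paper's: apply the change of variables of Lemma~\ref{lemma-change of variables-2} with $T=T_m$, use the pinching \eqref{eq-lambda-again} to reduce the estimate of $S_{p,K_m}(U)$ to $\Ha(V_m)$ for $V_m=\{T_m^{-1}u/\|T_m^{-1}u\|:u\in U\}$, and then observe that $\lambda_k^m/\lambda_{k-1}^m\to 0$ forces $V_m$ to collapse onto the null set $\Sn\cap H^\perp$ (your identification of the limit set as $\Sn\cap H^{\perp}$ is the right one; the paper's text writes $(\Sn\cap H)_\varepsilon$ at this step, which is a slip that does not affect the argument). The genuine difference is how the factor $(\det T_m)^{-1}\|T_mx\|^p$ is controlled. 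The paper bounds it by a constant via Lemma~\ref{lemma-max|T_m^tx|^p}, which requires placing $K_m$ in John position and a separate little argument for $p>0$. You instead apply the same change-of-variables identity to $U$ and to all of $\Sn$ and take the ratio, so that $\det T_m$ cancels exactly; the remaining ratio of integrals is handled by the elementary bounds $(\lambda_n^m)^{-p}|x_n|^p\le\|T_mx\|^p\le(\lambda_n^m)^{-p}$, and the uniform bound $M_0$ on $S_{p,K_m}(\Sn)$ comes for free from $K_m\to K_\infty$. This bypasses Lemma~\ref{lemma-max|T_m^tx|^p} entirely and is arguably cleaner, at the (negligible) cost of the extra normalization step. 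Two cosmetic remarks: the $\varepsilon_m$-neighbourhood of $\Sn\cap H^\perp$ should strictly be a $2\varepsilon_m$-neighbourhood, since projecting a unit vector whose $H$-component has norm $\le\varepsilon_m$ back onto $\Sn\cap H^\perp$ costs an additional $1-\sqrt{1-\varepsilon_m^2}$; and your portmanteau reduction via the exhaustion $\{d(\cdot,H)>1/j\}$ is fine but could be stated for arbitrary open $U$ with ${\rm cl}\,U\cap H=\emptyset$, exactly as in the paper.
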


%If not, then there is an unbounded sequence $\{L_m\}$, such that
%$$C^{-1}\leq h^{1-p}_{L_m}f_{L_m}\leq C,$$for all $m$. Also, for each $m$, there exists a symmetric and positive definite $T_m\in GL(n)$ and a convex body $K_m$, such that $L_m=T_m K_m$. By the boundedness of the sequence $\{K_m\}$ and by taking subsequences if necessary, we may assume that $K_m\to K$, for some convex body $K$.\\

%\begin{proposition}\label{prop-1}
%Fix the sequence $\{L_m\}$ (as above). Then, there is a choice of $K_1,K_2,\dots$, such that $S_{p,K}$ is supported in a proper subspace $H$ of $\R^n$. Moreover, the restriction of $S_{p,K}$ to the class of Borel subsets of $\Sn\cap H$ is absolutely continuous (with respect to the spherical Lebesgue measure in $\Sn\cap H$) with density bounded away from zero.
%\end{proposition}\

Before we prove Lemma~\ref{lemma-concentration}, we need to fix some notations.  We write $A\approx B$ if the quantity $A/B$ is bounded from above and below by positive constants that depend only on $n$ and $ p$. We also write $A\approx_{a_1,\dots,a_l} B$ to denote that the ratio $A/B$ is bounded from above and below by positive constants that depend only on $n$, $ p$ and the parameters $a_1,\dots,a_l\in\R$.

\begin{lemma}\label{lemma-max|T_m^tx|^p}
Let $T_m$, $K_m$ be defined as in Lemma~\ref{lemma-concentration}. If, in addition, $a^{-1}B^n+x_m\subset K_m\subset aB^n$, for some constant $a\geq 1$, for some $x_m\in\R^n$ and for all $m$, then
$$\det T_m\approx_{a,\lambda} \max_{x\in\Sn}\|T_mx\|^p.$$
\end{lemma}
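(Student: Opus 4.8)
The plan is to estimate both quantities directly in terms of the eigenvalues $1/\lambda_j^m$ of $T_m$. On the right-hand side, since $T_m=\mathrm{diag}(1/\lambda_1^m,\dots,1/\lambda_n^m)$ with $\lambda_1^m\geq\cdots\geq\lambda_n^m>0$, the operator norm of $T_m$ on $\Sn$ is exactly $1/\lambda_n^m$, so $\max_{x\in\Sn}\|T_mx\|^p=(\lambda_n^m)^{-p}$ (here we use $T_m$ symmetric, so $T_m^t=T_m$; also $p\geq 0$ makes this the max rather than the min). On the left-hand side, $\det T_m=\prod_{j=1}^n(\lambda_j^m)^{-1}$. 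So the content of the lemma is the two-sided bound $\prod_{j=1}^n(\lambda_j^m)^{-1}\approx_{a,\lambda}(\lambda_n^m)^{-p}$, i.e. the remaining factors $\prod_{j=1}^n(\lambda_j^m)^{-1}\cdot(\lambda_n^m)^{p}$ are bounded above and below. This is where the hypotheses $a^{-1}B^n+x_m\subset K_m\subset aB^n$ and the pinching \eqref{eq-lambda-again} must be exploited.

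First I would pass from $K_m$ to $L_m=T_mK_m$: the inclusions $a^{-1}B^n+x_m\subset K_m\subset aB^n$ become $T_m(a^{-1}B^n)+T_mx_m\subset L_m\subset a\,T_m B^n$, so $L_m$ is squeezed between a translate of the ellipsoid $T_m(a^{-1}B^n)$ and the ellipsoid $a\,T_mB^n$, whose semi-axes are $a^{\pm 1}/\lambda_j^m$. In particular the volume satisfies $V(L_m)\approx_a \prod_j(\lambda_j^m)^{-1}=\det T_m$, and (being trapped between concentric-up-to-translation scalings of the same ellipsoid) $h_{L_m}(u)\approx_a \|T_mu\|=\bigl(\sum_j (u_j/\lambda_j^m)^2\bigr)^{1/2}$ for all $u\in\Sn$ — modulo the translation, which only shifts $h_{L_m}$ by $\langle T_mx_m,u\rangle$, a term controlled by $h_{L_m}$ itself since $o\in L_m$ forces $h_{L_m}\geq 0$ and one can bound $\|T_mx_m\|$ against the larger ellipsoid's axes; this bookkeeping is routine but needs care. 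The key input is then the integral identity $nV(L_m)=\int_{\Sn}h_{L_m}\,dS_{L_m}=\int_{\Sn}h_{L_m}^{p}\cdot h_{L_m}^{1-p}f_{L_m}\,d\Ha$, which together with \eqref{eq-lambda-again} gives $nV(L_m)\approx_\lambda\int_{\Sn}h_{L_m}^p\,d\Ha\approx_{a,\lambda}\int_{\Sn}\|T_mu\|^p\,d\Ha(u)$.

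It therefore remains to show $\int_{\Sn}\|T_mu\|^p\,d\Ha(u)\approx (\lambda_n^m)^{-p}=\max_{x\in\Sn}\|T_mx\|^p$; combined with the previous step this yields $\det T_m\approx_a V(L_m)\approx_{a,\lambda}(\lambda_n^m)^{-p}\approx\max_{x\in\Sn}\|T_mx\|^p$, which is the claim. For the averaged-norm estimate: the upper bound $\int_{\Sn}\|T_mu\|^p\,d\Ha\leq \Ha(\Sn)\cdot(\lambda_n^m)^{-p}$ is immediate since $\|T_mu\|\leq 1/\lambda_n^m$. For the lower bound, restrict the integral to a fixed-size spherical cap around $e_n$: on $\{u\in\Sn:u_n\geq 1/2\}$ one has $\|T_mu\|\geq u_n/\lambda_n^m\geq 1/(2\lambda_n^m)$, so $\int_{\Sn}\|T_mu\|^p\,d\Ha\geq \Ha(\{u_n\geq 1/2\})\cdot(2\lambda_n^m)^{-p}\approx (\lambda_n^m)^{-p}$ (using $p\geq 0$).

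The main obstacle I anticipate is not any single estimate but the translation term: because $K_m$ (hence $L_m$) need not be centered, $h_{L_m}$ is only comparable to $\|T_mu\|$ up to an additive $\langle T_mx_m,u\rangle$, and one must argue this does not spoil the two-sided comparison. The clean way is to note $a^{-1}B^n+x_m\subset K_m\subset aB^n$ forces $|x_m|\leq a-a^{-1}<a$ (so $x_m\in aB^n$), whence $\|T_mx_m\|\leq a\cdot(\text{largest axis scale})$; but more robustly, since $o\in L_m$ we have $h_{L_m}\geq 0$ everywhere and $h_{L_m}(u)+h_{L_m}(-u)\approx_a \|T_mu\|$ (this \emph{width} is translation-invariant), so $\int_{\Sn}h_{L_m}\,d\Ha=\tfrac12\int_{\Sn}(h_{L_m}(u)+h_{L_m}(-u))\,d\Ha\approx_a\int_{\Sn}\|T_mu\|\,d\Ha$, and one runs the same argument with the integrand $h_{L_m}^{1-p}f_{L_m}$ replaced using $\int h_{L_m}^{1-p}f_{L_m}\cdot(h_{L_m}(u)+h_{L_m}(-u))^{p}\,d\Ha$ via a symmetrization trick, or simply invokes that $h_{L_m}(u)\approx_a\|T_mu\|$ holds off a translation whose contribution to $\int h_{L_m}^p$ is absorbed since $|\langle T_mx_m,u\rangle|\leq a\|T_mu\|\cdot$const. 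Either route closes the gap, at the cost of tracking the dependence on $a$.
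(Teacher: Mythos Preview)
Your plan is correct and follows essentially the same route as the paper: both arguments use $V(L_m)\approx_a\det T_m$, then the identity $nV(L_m)=\int_{\Sn}h_{L_m}^p\cdot(h_{L_m}^{1-p}f_{L_m})\,d\Ha\approx_\lambda\int_{\Sn}h_{L_m}^p\,d\Ha$, then compare $\int_{\Sn}h_{L_m}^p\,d\Ha$ to $\int_{\Sn}\|T_mu\|^p\,d\Ha\approx\max_{x\in\Sn}\|T_mx\|^p$ via the same symmetrization trick (the paper phrases it as $h_{K_m}^p(x)+h_{K_m}^p(-x)\geq a^{-p}\|x\|^p$, which is your width observation). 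One caveat: your second suggested route for the translation issue, bounding $|\langle T_mx_m,u\rangle|\leq a\|T_mu\|$ and hoping to ``absorb'' it, does not give a pointwise lower bound on $h_{L_m}(u)$ (it yields $h_{L_m}(u)\geq(a^{-1}-a)\|T_mu\|$, which is negative), so you must use the symmetrization route; fortunately you already list it first.
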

\begin{proof}
Using the fact that $|K_m|\approx_a 1$ and \eqref{eq-lambda-again}, we get 
\begin{eqnarray*}
\det T_m\approx_a V(T_m K_m)&=&\frac{1}{n}\int_{\Sn}h_{T_mK_m}(x)f_{T_mK_m}(x)\, d\Ha(x)\\
&\approx_{a,\lambda}& \int_{\Sn} h_{T_mK_m}^p(x) \, d\Ha(x) =\int_{\Sn} h_{K_m}^p(T_mx)\, d\Ha(x).  
\end{eqnarray*}Thus, the assertion is trivial if $p=0$. Let $0<p<1$. Since
$$\int_{\Sn}\|T_mx\|^p\, d\Ha(x)\approx \max_{x\in \Sn}\|T_mx\|^p,$$ it suffices to prove that 
$$\int_{\Sn}h_{K_m}^p(T_mx)\, d\Ha(x)\approx_a \int_{\Sn}\|T_mx\|^p\, d\Ha(x).$$
The ``$\leq$" part follows immediately from the fact that $h_{K_m}\leq a\|\cdot\|$. For the reverse inequality, simply notice that 
$$\max\{h_{K_m}(x),h_{K_m}(-x)\}\geq a^{-1},\qquad \forall x\in\Sn.$$In particular,
$$h_{K_m}^p(x)+h_{K_m}^p(-x)\geq a^{-p}\|x\|^p,\qquad \forall x\in\R^n$$
and hence 
\begin{align*}
\int_{\Sn}h_{K_m}^p(T_mx)\, d\Ha(x)&=\frac12\int_{\Sn}\left(h_{K_m}^p(T_mx)+h_{K_m}^p(-T_mx)\right)\, d\Ha(x)\\
&\geq \frac{a^{-p}}2\int_{\Sn}\|T_mx\|^p\, d\Ha(x).
\end{align*}
\end{proof}
\begin{proof}[Proof of Lemma \ref{lemma-concentration}]
Since $\{K_m\}$ converges to a convex body, there exists $a>0$ and a sequence of points $\{x_m\}$ in $\R^n$, such that $a^{-1}B^n+x_m\subset K_m\subset aB^n$, for all $m$. Let $U\subset \Sn$ be an open 
such that $H\cap {\rm cl}\,U=\emptyset$.  
As Lemma \ref{lemma-max|T_m^tx|^p} yields the existence 
of a constant $C_1>0$  depending only on $n,\ p$, such that $(\det T_m)^{-1}\|T_m x\|^p\leq C_1$ for $x\in\Sn$,
we deduce from Lemma \ref{lemma-change of variables-2} and \eqref{eq-lambda-again} that
\begin{align}
\nonumber
 \int_Uh^{1-p}_{K_m}(x)f_{K_m}(x)\, d\Ha(x)=&(\det T_m)^{-1}\int_{\{T^{-1}_mu/\|T^{-1}_mu\|:u\in U\}}h^{1-p}_{T_mK_m}(x)f_{T_mK_m}(x)\|T_m x\|^p\, d\Ha(x) \\
\nonumber
 \leq& C_1 \int_{\{T^{-1}_mu/\|T^{-1}_mu\|:u\in U\}}h^{1-p}_{T_mK_m}(x)f_{T_mK_m}(x)\, d\Ha(x)\\
\label{SKmUuppBound}
 \leq&\lambda C_1 \Ha\left(\{T^{-1}_mu/\|T^{-1}_mu\|:u\in U\}\right).
\end{align}
 Since $H\cap {\rm cl}\,U=\emptyset$, there exists $c>0$ such that 
$$\max\{\|u_1\|,\|u_2\|,\dots,\|u_{k-1}\|\}\geq c, \mbox{ \ for }u=(u_1,\dots,u_n)=u_1e_1+\dots+u_ne_n\in U.$$
It follows that if $u\in U$ and $i\in\{k,k+1,\dots,n\}$,  then
$$
\left|\left\langle\frac{T_m^{-1}u}{\|T_m^{-1}u\|},e_i\right\rangle\right|=\frac{\lambda_i^m|u_i|}{\sqrt{(\lambda_1^mu_1)^2+\dots+(\lambda_n^mu_n)^2}}
\leq\frac{\lambda_i^m|u_i|}{c\lambda_{k-1}^m}\leq \frac{1}{c}\frac{\lambda_k^m}{\lambda_{k-1}^m}.
$$
We note that $\lambda^m_k/\lambda_{k-1}^m\to 0$, and hence for any $\varepsilon>0$, there exists $m_0\in\mathbb{N}$, such that if $m\geq m_0$, then
$$\left|\left\langle\frac{T_m^{-1}u}{\|T_m^{-1}u\|},e_i\right\rangle\right|<\varepsilon.$$
It follows that if  $\varepsilon>0$, and $m$ is large enough, then the set $\{T^{-1}_mu/\|T^{-1}_mu\|:u\in U\}$ is contained in $(\Sn\cap H)_\varepsilon:=\{x\in\Sn:d(\Sn\cap H,x)<\varepsilon\}$, and hence 
$$\Ha\left(\{T^{-1}_mu/\|T^{-1}_mu\|:u\in U\}\right)\to 0 \mbox{ \ as $m\to \infty$. }$$
Since $U$ is open and since $\{S_{p,K_m}\}$ converges weakly to $S_{p,K_\infty}$, we deduce
from \eqref{SKmUuppBound} that
$$S_{p,K_\infty}(U)\leq \liminf_mS_{p,K_m}(U)= 0.$$
Therefore, ${\rm supp}\,S_{p,K_\infty}\subset H$.
\end{proof}

Lemma~\ref{structure} yields  the following:
\begin{lemma}\label{lemma-also-concentrated}
If $K\in{\cal K}_o^n$ is such that $S_{p,K}$ is concentrated in $H:=\textnormal{span}\{e_k,e_{k+1},\dots,e_n\}$, where $k\in\{2,\dots,n\}$, and $T$ is a diagonal and positive definite operator, then $S_{p,TK}$ is also concentrated in $H$.    
\end{lemma}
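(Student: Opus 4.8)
The plan is to deduce this immediately from Lemma~\ref{structure}(iv), whose conclusion already contains the invariance statement ``${\rm supp}\,S_{p,\Phi K}\subset L$'' for block-diagonal $\Phi=\widetilde\Phi\oplus\Psi$. First I would set $L:=H={\rm span}\{e_k,\dots,e_n\}$ and record that $L$ is an $(m+1)$-dimensional linear subspace of $\R^n$ with $m=n-k$; since $k\in\{2,\dots,n\}$, this gives $0\le m\le n-2$, so the standing hypotheses of Lemma~\ref{structure} are in force. Moreover, the assumption that $S_{p,K}$ is concentrated in $H$ means precisely that ${\rm supp}\,S_{p,K}\subset L$ (the support of a Borel measure on $\Sn$ lies in the closed set $\Sn\cap H$ iff the measure charges no set disjoint from $H$).

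Next I would use that $T$ is diagonal with respect to the fixed orthonormal basis $\{e_1,\dots,e_n\}$ and positive definite. Diagonality forces $T(H)=H$ and $T(H^\bot)=H^\bot$, so $T$ splits as $T=\widetilde T\oplus \Psi$ with $\widetilde T:=T|_{H}\in{\rm GL}(H)$ and $\Psi:=T|_{H^\bot}\in{\rm GL}(H^\bot)$, both positive definite. This is exactly the block-diagonal form (with respect to the orthogonal splitting $\R^n=H\oplus H^\bot$) that Lemma~\ref{structure}(iv) requires of $\Phi$.

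Finally, applying Lemma~\ref{structure}(iv) with $\Phi=T$ and $L=H$ yields ${\rm supp}\,S_{p,TK}\subset L=H$, which is the assertion that $S_{p,TK}$ is concentrated in $H$. (The two-sided comparison $\gamma_1 S_{p,K}(\omega)\le S_{p,TK}(\widetilde\omega)\le \gamma_2 S_{p,K}(\omega)$ provided by that lemma is not needed here, though it reconfirms that no mass escapes $H$ under the pushforward by $\widetilde T^{-t}$.) I do not anticipate any real obstacle: the only points requiring a line of care are the bookkeeping identifications already noted — that ``concentrated in $H$'' coincides with ``${\rm supp}\subset H$'' for a measure on $\Sn$, and that a diagonal positive definite $T$ genuinely decomposes as $\widetilde T\oplus\Psi$ relative to $H\oplus H^\bot$ — both of which are routine.
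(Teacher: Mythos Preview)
Your proposal is correct and matches the paper's own approach: the paper simply states that Lemma~\ref{structure} yields Lemma~\ref{lemma-also-concentrated}, and your argument spells out precisely how---identifying $L=H$, checking the dimension constraint $0\le m=n-k\le n-2$, decomposing the diagonal $T$ as $\widetilde T\oplus\Psi$ along $H\oplus H^\bot$, and invoking part~(iv). No further work is needed.
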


The last auxiliary statement we need to verify  Proposition~\ref{prop1-bounded-density} is the following one:

\begin{lemma}
\label{volumeLower}
For $n\geq 2$, $p\in[0,1)$ and $\theta>1$,  there exists a constant $c=(\theta,n,p)>0$ such that if $L\in {\cal K}^n_{(o)}$ has absolutely continuous surface area measure and satisfy 
$$
 h_L^{1-p} f_L\geq \theta^{-1} \mbox{ \ and  $h_L\leq \theta$ on }\Sn,
$$
then $V(L)\geq c$.
\end{lemma}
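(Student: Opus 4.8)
The plan is to prove the bound directly, with an explicit constant, by playing the two hypotheses against each other through the integral identity \eqref{SKintegral}. The inequality $h_L^{1-p}f_L\ge \theta^{-1}$ forces the total $L_p$ surface area to be bounded below, while $h_L\le\theta$ both confines $L$ to $\theta B^n$ (so that $\Ha(\partial L)$ is bounded from above) and bounds the integrand $\langle x,\nu_L(x)\rangle^{1-p}=h_L(\nu_L(x))^{1-p}$ from above on $\partial'L$. From these one extracts a boundary patch of definite $\Ha$-measure on which the supporting hyperplanes stay a definite distance from $o$; the cone over that patch is contained in $L$ and has volume bounded below.

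Concretely, I would proceed as follows. Since $L\subset\theta B^n$, monotonicity of surface area (\cite{Sch14}) gives $\Ha(\partial' L)=\Ha(\partial L)\le \theta^{n-1}\Ha(\Sn)=:S_{\max}$. The lower pinching bound yields $S_{p,L}(\Sn)=\int_{\Sn}h_L^{1-p}f_L\,d\Ha\ge \theta^{-1}\Ha(\Sn)=:m_0$. Applying \eqref{SKintegral} with $\psi(s)=s^{1-p}$ — which is legitimate because $p\in[0,1)$ makes $\psi$ bounded on $[0,\theta]$, and $\langle x,\nu_L(x)\rangle=h_L(\nu_L(x))\in[0,\theta]$ since $o\in L$ and $h_L\le\theta$ — gives $m_0\le S_{p,L}(\Sn)=\int_{\partial'L}\langle x,\nu_L(x)\rangle^{1-p}\,d\Ha(x)$, with integrand at most $\theta^{1-p}$. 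Put $t_0:=(m_0/2S_{\max})^{1/(1-p)}>0$ and $E:=\{x\in\partial'L:\langle x,\nu_L(x)\rangle\ge t_0\}$. Splitting the integral over $E$ and $\partial'L\setminus E$ gives $m_0\le \theta^{1-p}\Ha(E)+t_0^{1-p}S_{\max}=\theta^{1-p}\Ha(E)+\tfrac{m_0}{2}$, hence $\Ha(E)\ge m_0/(2\theta^{1-p})=:a_0>0$.

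Finally, using \eqref{SKintegral} again with $\psi=\mathrm{id}$, together with $\int_{\Sn}h_L\,dS_L=nV(L)$, we get $nV(L)=\int_{\partial'L}\langle x,\nu_L(x)\rangle\,d\Ha(x)\ge\int_E\langle x,\nu_L(x)\rangle\,d\Ha(x)\ge t_0\,\Ha(E)\ge t_0 a_0$, where the first inequality uses that the integrand is nonnegative because $o\in L$. Thus $V(L)\ge c(\theta,n,p):=t_0a_0/n$, where $t_0=(2\theta^n)^{-1/(1-p)}$ and $a_0=\tfrac12\theta^{p-2}\Ha(\Sn)$ depend only on $\theta,n,p$. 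The lemma is elementary; the only points needing care are the admissibility of the exponent $1-p$ in \eqref{SKintegral} — which is exactly where $p<1$ and $L\in\KKn_{(o)}$ enter — and the second use of $h_L\le\theta$ (via $\langle x,\nu_L(x)\rangle\le\theta$) to keep the integrand bounded. I do not expect a genuine obstacle.
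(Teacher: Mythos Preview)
Your argument is correct, but the route differs from the paper's. The paper dispatches the case $p\in(0,1)$ in one step via H\"older's (equivalently Jensen's) inequality with respect to the measure $dS_L=f_L\,d\Ha$:
\[
\theta^{-1}\Ha(\Sn)\le \int_{\Sn}h_L^{1-p}\,dS_L
\le \Big(\int_{\Sn}h_L\,dS_L\Big)^{1-p}\Big(\int_{\Sn}1\,dS_L\Big)^p
=(nV(L))^{1-p}S(L)^p,
\]
and then uses $S(L)\le \theta^{n-1}\Ha(\Sn)$. Your approach replaces the H\"older step by a Chebyshev-type level-set argument on $\partial'L$: you isolate the ``thick'' boundary piece $E$ where $\langle x,\nu_L(x)\rangle\ge t_0$, bound $\Ha(E)$ from below, and feed this into the cone-volume identity. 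The paper's proof is shorter and yields a cleaner constant; your version is a bit more geometric and, incidentally, handles $p=0$ uniformly without the separate remark the paper makes. Both are elementary and the logical inputs (the lower pinching bound, $L\subset\theta B^n$, and $\Ha(\partial L)\le\theta^{n-1}\Ha(\Sn)$) are the same. One small remark: the hypothesis $L\in\KKn_{(o)}$ is not really what makes $\psi(s)=s^{1-p}$ admissible in \eqref{SKintegral}; it is $o\in L$ together with $h_L\le\theta$ that confines $\langle x,\nu_L(x)\rangle$ to $[0,\theta]$.
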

\begin{proof}
We may assume that $p\in(0,1)$. Writing $S(L)$ to denote the surface area of $L$, it follows form the H\"older inequality that
\begin{align*}
\theta^{-1}\,\Ha(\Sn)&\leq \int_{\Sn}h^{1-p}_{L}(x)f_{L}(x)\,d\Ha(x)\\
&\leq
\left(\int_{\Sn}h_{L}(x)f_{L}(x)d\Ha(x)\right)^{1-p}\left(\int_{\Sn}1\cdot f_{L}(x)\,d\Ha(x)\right)^p\\
&=
(nV(L))^{1-p}S(L)^p\leq(nV(L))^{1-p}\theta^{p(n-1)}\Ha(\Sn)^p.
\end{align*}
\end{proof}

\begin{proof}[Proof of Proposition~\ref{prop1-bounded-density}]
The proof is by contradiction. We suppose that the assertion of the Proposition is not true, and hence,
using also Lemma~\ref{volumeLower}, 
 for any positive integer $m$, there exists a convex body $L_m\in {\cal K}_{(o)}$, such that 
\begin{equation}\label{eq-thm1-bounded-density-1}
1-\frac{1}{m}\leq h^{1-p}_{L_m}(x)f_{L_m}(x)\leq 1+\frac{1}{m},\qquad\forall x\in\Sn    
\end{equation}
and
\begin{equation}\label{eq-thm1-bounded-density-2}
\sup_{x\in\Sn}h_{L_m}(x)>m.  
\end{equation}
Next, note that for each $m$, there exists $T_m\in GL(n)$, such that the maximal volume ellipsoid contained in $K_m:=T_m^{-1}L_m$ is $x_m+B^n$ for $x_m\in\R^n$. Then  
$K_m\subset x_m+nB^n$ by the classical theorem of F. John, and hence $o\in K_m$ yields
\begin{equation}\label{eq-thm1-bounded-density-3}
x_m+B^n\subset K_m\subset 2nB^n.  
\end{equation}

Since \eqref{eq-thm1-bounded-density-1}, \eqref{eq-thm1-bounded-density-2} and \eqref{eq-thm1-bounded-density-3} are invariant under composition with orthogonal maps, we may assume that 
$$T_m^{-1}=\textnormal{diag}(\lambda_1^m,\dots,\lambda^m_n),$$
for some $\lambda_1^m\geq \dots\lambda^m_n>0$. Furthermore, by taking subsequences, we may assume that all sequences $\{\lambda_j^m\}$ either converge or tend to infinity, all sequences $\{\lambda_{j+1}^m/\lambda_j^m\}$ converge and that $K_m\to K_\infty$, for some convex body $K_\infty$.

{\bf Claim 1.} $\lambda_1^m\to\infty$ and $\lambda_n^m\to 0$.
\begin{proof}
Set $s_j^m:=(\lambda_j^n)^{-1}$, $j=1,\dots,n$. As $s_1^m$ and $s_n^m$ are the smallest and the largest eigenvalue of $T_m$ respectively, it is equivalent to prove that $s_n^m\to \infty$ and $s_1^m\to 0$. 

Here $s_n^m\to \infty$ follows from the fact that the sequence $\{K_m\}$ is bounded, while the sequence $\{T_mK_m\}$ is unbounded. For $s_1^m\to 0$, notice that Lemma \ref{lemma-max|T_m^tx|^p} implies
$$s_1^m\dots s_n^m\approx \max_{x\in \Sn}\|T_mx\|^p=(s_n^m)^p,$$
and hence  $$s_1^ms_2^m\dots (s_n^m)^{1-p}\approx 1.$$
Therefore, $s_1^m\to 0$  follows from that fact that $\lim_m (s_n^m)^{1-p}=\infty$. 
\end{proof}

To continue with the proof of Proposition~\ref{prop1-bounded-density}, as in Lemma \ref{lemma-concentration}, set 
$$k:=\min\Big\{j\in\{1,\dots,n\}:\lim_m\frac{\lambda_n^m}{\lambda_j^m}>0\Big\}\qquad\textnormal{and}\qquad H:=\textnormal{span}\{e_k,\dots,e_n\}.$$
It follows by Claim 1 that $k>1$, hence $\dim H\leq n-1$. Moreover, Lemma \ref{lemma-concentration} shows that $S_{p,K_\infty}$ is concentrated in $H$.

For $m\in\mathbb{N}$, set $$S_m:=\textnormal{diag}\left(1,\dots,1,\frac{\lambda^m_n}{\lambda^m_k},\dots,\frac{\lambda^m_n}{\lambda^m_{n-1}},1\right)$$
and 
$$K'_m:=S_mK_m=S_mT_m^{-1}L_m.$$
Thus, $L_m=\Phi_mK'_m$, where
$$\Phi_m=\textnormal{diag}\left(\frac{1}{\lambda_1^n},\dots,\frac{1}{\lambda_{k-1}^m},\frac{1}{\lambda^m_n},\dots,\frac{1}{\lambda^m_n}\right).$$
Observe that $S_m\to S$, for some diagonal positive definite matrix $S$. Hence, $K'_m\to SK_\infty$. Since $S_{p,K_\infty}$ is concentrated in $H$ and $S$ is diagonal, Lemma \ref{lemma-also-concentrated} shows that $S_{p,SK_\infty}$ is also concentrated in $H$.

On the other hand, Lemma \ref{lemma-change of variables-1} shows that for any continuous function $\varphi:\Sn\to \R_+$, we have
\begin{eqnarray*}
\int_{\Sn}\varphi(x)h_{K'_m}^{1-p}(x)f_{K'_m}\, d\Ha(x)=\det\Phi_m^{-1}\int_{\Sn}\varphi(\Phi_mx/\|\Phi_mx\|)h^{1-p}_{L_m}(x)f_{L_m}(x)\|\Phi_mx\|^p\, d\Ha(x) ,
\end{eqnarray*}
which, by \eqref{eq-thm1-bounded-density-1}, gives
$$1-\frac{1}{m}\leq\frac{\int_{\Sn}\varphi(x)h_{K'_m}^{1-p}(x)f_{K'_m}(x)\, d\Ha(x)}{\det \Phi_m^{-1}\int_{\Sn}\varphi(\Phi_mx/\|\Phi_mx\|)\|\Phi_mx\|^p\, d\Ha(x)}\leq 1+\frac{1}{m}.$$
Using Lemma \ref{lemma-change-of-variables-3}, we conclude that for each $m$, it holds
$$1-\frac{1}{m}\leq\frac{\int_{\Sn}\varphi(x)h_{K'_m}^{1-p}(x)f_{K'_m}(x)\, d\Ha(x)}{\det \Phi_m^{-2}\int_{\Sn}\varphi(x)\|\Phi_m^{-1} x\|^{-n-p} \, d\Ha(x)}\leq 1+\frac{1}{m}.$$
According to Schneider \cite{Sch14}, the restriction of $h_{K'_m}$ to $\Sn$ tends uniformly to 
$h_{SK_\infty}$, and the surface area measure $S_{K'_m}$  tends weakly to 
$S_{SK_\infty}$, and hence  $dS_{p,K'_m}=h_{K'_m}^{1-p}f_{K'_m}\, d\Ha$  tends weakly to 
$S_{p,SK_\infty}$. We deduce that
\begin{eqnarray}\label{eq-thm1-bounded-density-4}
\lim_m\det\Phi_m^{-2}\int_{\Sn}\varphi(x)\|\Phi_m^{-1}x\|^{-n-p}d\, \Ha(x)&=&\lim_m\int_{\Sn}\varphi(x)h_{K'_m}^{1-p}(x)f_{K'_m}(x)\, d\Ha(x)\nonumber\\
&=&\int_{\Sn}\varphi(x)\, dS_{p,SK_\infty}(x).
\end{eqnarray}

Let $O$ be any orthogonal map that fixes the elements of $H^\perp$. Then, according to \eqref{eq-thm1-bounded-density-4}, we have
\begin{eqnarray*}
 \int_{\Sn}(\varphi\circ O^t)(x)\, dS_{p,SK_\infty}(x)&=&\lim_m\det\Phi_m^{-2}\int_{\Sn}\varphi(O^tx)\|\Phi_m^{-1}x\|^{-n-p}\,d\Ha(x) \\
&=&\lim_m\det\Phi_m^{-2}\int_{\Sn}\varphi(y)\|\Phi_m^{-1}Oy\|^{-n-p}\, d\Ha(y)\\
&=&\lim_m\det\Phi_m^{-2}\int_{\Sn}\varphi(y)\|\Phi_m^{-1}y\|^{-n-p}\, d\Ha(y)\\
 &=&\int_{\Sn}\varphi(x)\, dS_{p,SK_\infty}(x).
\end{eqnarray*}
Consequently, $S_{p,SK_\infty}(OA)=S_{p,SK_\infty}(A)$, for any such $O$ and for any Borel set $A$ in $\Sn$. Therefore, since $S_{p,SK_\infty}$ is concentrated in $H$, it follows that the restriction of $S_{p,SK_\infty}$ to the class of Borel subsets of $H$ is a Haar measure on $\Sn\cap H$, and hence it is a multiple of the spherical Lebesgue measure on $\Sn\cap H$. However (recall that $\dim H\leq n-1$), by \cite[Theorem 1.3]{Sar}, this is a contradiction, completing the  proof of Proposition~\ref{prop1-bounded-density}. 
\end{proof}
\begin{lemma}\label{lemma-o in int K}
There exist constants $\varepsilon_1>0$ and $c>0$, that depend only on $n$ and $p$, such that if $\|h_K^{1-p}f_K-1\|_{L^\infty}<\varepsilon_1$, then 
$h_K(x)\geq c$, for all $x\in \Sn$.
\end{lemma}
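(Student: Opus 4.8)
The plan is to argue by contradiction, via a compactness argument in the same spirit as the proof of Proposition~\ref{prop1-bounded-density}. Fix $\varepsilon_1\le\varepsilon_0$ (with $\varepsilon_0$ from Proposition~\ref{prop1-bounded-density}) and $\varepsilon_1<1$, and suppose the assertion fails. Then for every $m$ there is $K_m\in\KKn_o$ with $\|h_{K_m}^{1-p}f_{K_m}-1\|_{L^\infty(\Sn)}<1/m$ and $\min_{\Sn}h_{K_m}<1/m$; note that $\varepsilon_1<1$ forces $h_{K_m}^{1-p}f_{K_m}>0$, hence $h_{K_m}>0$ on $\Sn$ and $K_m\in\KKn_{(o)}$, so Proposition~\ref{prop1-bounded-density} applies and gives $h_{K_m}\le C$ (in particular $K_m\subseteq CB^n$) and $V(K_m)\ge C^{-1}$, uniformly in $m$. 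By Blaschke's selection theorem, after passing to a subsequence, $K_m\to K_\infty$ in the Hausdorff metric; then $V(K_\infty)\ge C^{-1}>0$, so $K_\infty$ is a convex body, $o\in K_\infty$, and $h_{K_m}\to h_{K_\infty}$ uniformly on $\Sn$, whence $\min_{\Sn}h_{K_\infty}=0$ and therefore $o\in\partial K_\infty$. Write $N=\{u\in\Sn:h_{K_\infty}(u)=0\}$ for the spherical normal cone of $K_\infty$ at $o$; it is a nonempty closed proper subset of $\Sn$.

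First I would dispose of the case in which $N$ has nonempty interior $U_0$ in $\Sn$ (equivalently $\Ha(N)>0$, since $N$ is the trace on $\Sn$ of a closed convex cone, and a closed convex cone with $\Ha$-null trace lies in a linear hyperplane). Since $h_{K_\infty}\equiv 0$ on $N\supseteq U_0$ and $h_{K_m}\to h_{K_\infty}$ uniformly, $\delta_m:=\max_{U_0}h_{K_m}\to 0$; using $f_{K_m}=h_{K_m}^{p-1}\,(h_{K_m}^{1-p}f_{K_m})\ge\tfrac12\,h_{K_m}^{p-1}$ for $m\ge 2$ together with $p-1<0$, the surface area of $K_m$ satisfies
$$S_{K_m}(\Sn)\ \ge\ \int_{U_0}f_{K_m}\,d\Ha\ \ge\ \tfrac12\,\Ha(U_0)\,\delta_m^{\,p-1}\ \longrightarrow\ \infty,$$
which contradicts $S_{K_m}(\Sn)\le S_{CB^n}(\Sn)$ (monotonicity of surface area under inclusion, as $K_m\subseteq CB^n$). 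Hence $\Ha(N)=0$.

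It then remains to identify $S_{p,K_\infty}$ in this remaining case. Hausdorff convergence gives $S_{K_m}\to S_{K_\infty}$ weakly with $S_{K_m}(\Sn)\to S_{K_\infty}(\Sn)$. For $\varphi\in C(\Sn)$ supported in the open set $\Sn\setminus N$ one has $h_{K_\infty}\ge c_\varphi>0$ on $\mathrm{supp}\,\varphi$, so $\varphi\,h_{K_m}^{1-p}\to\varphi\,h_{K_\infty}^{1-p}$ uniformly, and therefore
$$\int_{\Sn}\varphi\,dS_{p,K_\infty}\ =\ \lim_m\int_{\Sn}\varphi\,h_{K_m}^{1-p}\,dS_{K_m}\ =\ \lim_m\int_{\Sn}\varphi\,(h_{K_m}^{1-p}f_{K_m})\,d\Ha\ =\ \int_{\Sn}\varphi\,d\Ha,$$
the last equality because $h_{K_m}^{1-p}f_{K_m}\to 1$ uniformly. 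Since $\Ha(N)=0$ and also $S_{p,K_\infty}(N)=\int_N h_{K_\infty}^{1-p}\,dS_{K_\infty}=0$ ($1-p>0$ makes $h_{K_\infty}^{1-p}$ vanish on $N$), this yields $S_{p,K_\infty}=\Ha$ as measures on $\Sn$. Thus $K_\infty\in\KKn_o$ solves the $L_p$ Minkowski problem with constant density for $p\in[0,1)$; by the uniqueness of the constant-density solution for $-n<p<1$ (Brendle--Choi--Daskalopoulos \cite{BCD17}, see also Saroglou \cite{Sar22}; regularity theory shows such a solution is a $C^{2,\alpha}_+$ body, so these results apply) we get that $K_\infty$ is the centered unit ball $B^n$, whence $o\in{\rm int}\,K_\infty$ --- contradicting $o\in\partial K_\infty$. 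This contradiction proves the lemma.

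The main obstacle is exactly that $S_{p,\cdot}$ is \emph{not} weakly continuous at convex bodies with the origin on the boundary, which is the situation forced on the limit body $K_\infty$. One therefore cannot simply pass $S_{p,K_m}\to\Ha$ through to $S_{p,K_\infty}$; instead one must split off the normal cone $N$ of $K_\infty$ at $o$, treat the full-dimensional case separately by the surface-area blow-up above, and in the complementary case verify that $N$ is $\Ha$-negligible and contributes no $S_{p,K_\infty}$-mass, only after which the uniqueness theorem for constant density can be invoked to reach the contradiction.
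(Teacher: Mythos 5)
Your overall strategy --- contradiction, compactness via Proposition~\ref{prop1-bounded-density}, identification of the limit body as a solution of $S_{p,K_\infty}=\Ha$, and then uniqueness of the constant-density solution --- is exactly the paper's. Your case analysis on the normal cone $N$ is correct but largely unnecessary: for $p\in[0,1)$ the exponent $1-p$ is positive, so $h_{K_m}^{1-p}\to h_{K_\infty}^{1-p}$ uniformly on all of $\Sn$ (including where $h_{K_\infty}$ vanishes), and combined with the weak convergence $S_{K_m}\to S_{K_\infty}$ this already gives $S_{p,K_m}\to S_{p,K_\infty}$ weakly, hence $S_{p,K_\infty}=\Ha$, with no splitting. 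The discontinuity of $K\mapsto S_{p,K}$ at bodies with $o\in\partial K$ that you flag as the main obstacle only arises for $p<0$ or $p>1$; your surface-area blow-up in the case $\Ha(N)>0$ is a correct self-contained observation, but it is not needed.

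The genuine gap is at the very end. Having shown $S_{p,K_\infty}=\Ha$ while $o\in\partial K_\infty$, you invoke Brendle--Choi--Daskalopoulos, justifying its applicability by ``regularity theory shows such a solution is a $C^{2,\alpha}_+$ body.'' This is circular: Caffarelli-type regularity (as in \cite{DPF14}) requires the density of $S_{K_\infty}$ to be bounded above and below, and here that density equals $h_{K_\infty}^{p-1}$ on $\{h_{K_\infty}>0\}$, which blows up precisely near the set where $h_{K_\infty}=0$ --- that is, precisely in the degenerate situation you are trying to rule out. (Indeed, if $K_\infty$ were $C^{2}_+$ then $f_{K_\infty}$ would be bounded and $o\in\partial K_\infty$ would be impossible for trivial reasons; the whole difficulty is that a priori $K_\infty$ is only a generalized solution.) What is needed, and what the paper uses, is a uniqueness statement valid for generalized solutions without assuming $o\in{\rm int}\,K$: the paper cites \cite[Remark 3.5]{Sar} for exactly the assertion that $h_K^{1-p}f_K\equiv 1$ forces $K=B^n$ even when $o$ is allowed to lie on $\partial K$. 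Unless you replace the regularity appeal by that reference (or an equivalent argument covering the degenerate case), the contradiction is not reached. A secondary, smaller point: the bound $\|h_{K_m}^{1-p}f_{K_m}-1\|_{L^\infty}<1$ does not by itself force $h_{K_m}>0$ everywhere, since the normal cone of $K_m$ at $o$ may be $\Ha$-null and invisible to an $L^\infty$ condition; so $K_m\in\KKn_{(o)}$, which you need in order to quote Proposition~\ref{prop1-bounded-density} as stated, should be carried as a hypothesis from the context in which the lemma is applied rather than deduced.
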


\begin{proof}
If $h_K^{1-p}f_K=1$, everywhere on $\Sn$, then $K=B^n$ (even if we do not assume that $o\in {\rm int}\, K$; see \cite[Remark 3.5]{Sar}). If the assertion of the lemma does not hold, then we can find a sequence of convex bodies $\{K_m\}$, such that $\|h_{K_m}^{1-p}f_{K_m}-1\|_{L^\infty}\to 0$ and $\inf_{x\in\Sn}h_{K_m}(x)\to 0$. However, by Proposition \ref{prop1-bounded-density}, the sequence $\{h_{K_m}\}$ is bounded, and $V(K_m)$ is bounded away from zero, and hence it has a subsequence that converges to some convex body $K'$. But then, $h_{K'}^{1-p}f_{K'}\equiv 1$ and $\inf_{x\in\Sn}h_{K'}(x)=0$. This is a contradiction, completing our proof.
\end{proof}
The final part of the proof of Theorem~\ref{thm1-bounded-density} follows the argument in \cite{CHLL20}, \cite{CFL22}. However, in our opinion, it is slightly less technical.
\begin{proof}[Proof of Theorem~\ref{thm1-bounded-density}]
Consider the map
$$F:H^2(\Sn)\ni u\mapsto u^{1-p}\det(u_{ij}+\delta_{ij}u)_{i,j=1}^{n-1}\in L^2(\Sn).$$
Then, $F$ is a smooth map between Banach spaces and its Frechet derivative $T:H^2(\Sn)\to L^2(\Sn)$ at $v\equiv 1$ is given by
$$T(u)=\Delta_{\Sn}u+(n-p)u.$$
Notice that $T$ is bounded and invertible, since the eigenvalues of $-\Delta_{\Sn}$ are exactly the positive integers of the form $k^2+(n-2)k$, $k=0,1,\dots$. Thus, by the inverse function theorem for Banach spaces, there exists an $H^2$-neighbourhood
of the constant function 1, such that for $u_1,u_2$ in this neighbourhood, $F(u_1)=F(u_2)$ implies $u_1=u_2$.
In particular, there exists a $C^2$-neighbourhood $N$ of 1, such that if $h_K, h_L\in N$, and $S_{p,K}=S_{p,L}$, then $K=L$. Thus, it suffices to prove that there exists $\varepsilon>0$, such that if $\|g-1\|_{C^a}<\varepsilon$ and $dS_{p,K}=gdx$, then $h_K\in N$.

Assume not. Then, there exists a sequence $\{K_m\}$ of convex bodies, such that
\begin{equation}\label{eq-application-1}
\|h_{K_m}^{1-p} f_{K_m}-1\|_{C^a}\to 0,\qquad\textnormal{as }m\to\infty,   
\end{equation}
but $h_{K_m}\not\in N$, for all $m$. The latter can be written as
\begin{equation}\label{eq-application-2}
 \|h_{K_m}-1\|_{C^2}>\eta_0,   
\end{equation}
for some fixed $\eta_0>0$.

It follows from Proposition~\ref{prop1-bounded-density} that $\{K_m\}$ is a bounded sequence. 
Thus, we may assume that $\{K_m\}$ converges to some compact convex set $K$. Notice that $K$ has to be full dimensional, otherwise the sequence $\{h_{K_m}^{1-p}f_{K_m}\}$ would converge weakly to 0, which is impossible by \eqref{eq-application-1}.

Notice that $S_{p,{K_m}}$ converges weakly to $S_{p,K}$. However, $h_{K_m}^{1-p}f_{K_m}$ converges to 1 uniformly. Consequently, $S_{p,K}={\cal H}^{n-1}$. By Lemma \ref{lemma-o in int K}, we conclude that $K=B^n$. That is, $\|h_{K_m}-1\|_\infty\to 0$, as $m\to\infty$.

Next, set $g_m:=h_{K_m}^{1-p}f_{K_m}$ and notice that the Lipschitz constant of $h_{K_m}$ is bounded from above by the outradius of $K_m$. Thus, $\|h_{K_m}\|_{C^1}$ is uniformly bounded, which shows (since $\|g_m\|_{C^a}$ is uniformly bounded and since, by Lemma \ref{lemma-o in int K}, $h_{K_m}$ is bounded away from zero) that $\|f_{K_m}\|_{C^a}=\|h_{K_m}^{p-1}g_m\|_{C^a}$ is uniformly bounded. By Caffarelli's regularity theory (see De Philippis, Figalli \cite{DPF14}), there is a constant $C_0>0$ (which does not depend on $m$), such that
$$\|h_{K_m}\|_{C^{2,a}}\leq C_0.$$
By the Arzela-Ascoli Theorem, $\{h_{K_m}\}$ has a subsequence $\{h_{K_{l_m}}\}$ that converges in $C^2$. Since $\|h_{K_{l_m}}-1\|_\infty\to 0$, we conclude that
$$\|h_{K_{l_m}}-1\|_{C^2}\to 0.$$
This contradicts \eqref{eq-application-2} and our proof is complete.
\end{proof}

%\noindent{\bf Remark} Alternatively, for the last part of the proof, we can work as in [Chen, Huang, Li, Liu, The $L_p$-Brunn-Minkowski inequality for $p<1$, Lemma 2.2].

\section{Bounded density for $L_p$ surface area measure 
and the proof of Theorem~\ref{thm2-bounded-density}}
\label{secBoundedDensity}

Let us recall Problem~\ref{problem 1} from Chen, Feng, Liu \cite{CFL22}:

\begin{problem}[Chen, Feng, Liu \cite{CFL22}]\label{problem-10}
For $n\geq 2$, $p\in[0,1)$ and $\lambda>1$, does there exists a $C=C(\lambda,n,p)>0$ such that if $L\in {\cal K}^n_{(o)}$ with absolutely continuous surface area measure, satisfying 
\begin{equation}
\label{eq-main}
\lambda^{-1}\leq h_L^{1-p} f_L\leq \lambda,    \qquad \textnormal{on }\Sn,
\end{equation}
then $h_L\leq C$ and $V(L)\geq C^{-1}$.
\end{problem}

Theorem \ref{thm2-bounded-density} will follow immediately from Theorem \ref{inf0orpos} and the following result, which will be proved in the end of this section.

\begin{proposition}\label{prop-bounded-density-below} If  for some $n\geq 2$ and for some $p\in[0,1)$, the answer to Problem \ref{problem-10} is negative, then, there exists $K\in{\cal K}_o^n$, with $o\in\partial K$, such that $S_{p,K}$ is concentrated in a proper subspace $H$ of $\R^n$ and the restriction of $S_{p,K}$ to the class of Borel subsets of $\Sn\cap H$ is absolutely continuous with respect to the spherical Lebesgue measure on $\Sn\cap H$ and its density is bounded and bounded away from zero.    
\end{proposition}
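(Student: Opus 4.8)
The plan is to argue by contradiction, following closely the proof of Proposition~\ref{prop1-bounded-density}, but carrying the two-sided pinching constant $\lambda$ instead of $1\pm\varepsilon_0$ and, at the end, replacing the ``Haar measure'' conclusion by an estimate that keeps track of the density. So assume the answer to Problem~\ref{problem-10} is negative: there are $\lambda>1$ and bodies $L_m\in\KKn_{(o)}$ with absolutely continuous surface area measures such that $\lambda^{-1}\le h_{L_m}^{1-p}f_{L_m}\le\lambda$ on $\Sn$, and either $\sup_{\Sn}h_{L_m}\to\infty$ or $V(L_m)\to0$. By Lemma~\ref{volumeLower}, boundedness of $\{\sup_{\Sn}h_{L_m}\}$ would force $V(L_m)$ to be bounded below, so after passing to a subsequence we may assume $\sup_{\Sn}h_{L_m}\to\infty$. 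Now run the normalization of the proof of Proposition~\ref{prop1-bounded-density} verbatim: pick $T_m\in GL(n)$ so that the John ellipsoid of $K_m:=T_m^{-1}L_m$ equals $x_m+B^n\subset K_m\subset 2nB^n$; composing with orthogonal maps and passing to a subsequence, arrange $T_m^{-1}=\textnormal{diag}(\lambda_1^m,\dots,\lambda_n^m)$ with $\lambda_1^m\ge\dots\ge\lambda_n^m>0$, all $\lambda_j^m$ and all ratios $\lambda_{j+1}^m/\lambda_j^m$ convergent in $[0,\infty]$, and $K_m\to K_\infty$ for some convex body $K_\infty\ni o$. Exactly as in Claim~1 there, $\lambda_1^m\to\infty$ and $\lambda_n^m\to0$; setting $k:=\min\{j:\lim_m\lambda_n^m/\lambda_j^m>0\}$ and $H:=\textnormal{span}\{e_k,\dots,e_n\}$ we get $k\ge2$, so $d:=\dim H=n-k+1\le n-1$, and Lemma~\ref{lemma-concentration} gives ${\rm supp}\,S_{p,K_\infty}\subset H$. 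Introducing $S_m$, $\Phi_m$ and $K'_m:=S_mK_m$ as there, we have $L_m=\Phi_mK'_m$, $S_m\to S$ for a positive definite diagonal $S$, $K'_m\to SK_\infty$, and (Lemma~\ref{lemma-also-concentrated}) ${\rm supp}\,S_{p,SK_\infty}\subset H$. The only change so far is that the exact change of variables of Lemma~\ref{lemma-change of variables-1} together with the pinching now gives, for every continuous $\varphi\ge0$ on $\Sn$,
$$\lambda^{-1}\,I_m(\varphi)\;\le\;\int_{\Sn}\varphi\,dS_{p,K'_m}\;\le\;\lambda\,I_m(\varphi),\qquad I_m(\varphi):=(\det\Phi_m)^{-1}\int_{\Sn}\varphi\!\left(\tfrac{\Phi_mx}{\|\Phi_mx\|}\right)\|\Phi_mx\|^p\,d\Ha(x).$$

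The new ingredient is a two-sided asymptotic evaluation of $I_m(\varphi)$. Write $\R^n=H^\perp\oplus H$, $x=(x',x'')$, and $b_m:=1/\lambda_n^m\to\infty$; then $\Phi_m=\widetilde{\Phi}_m\oplus b_m\,{\rm Id}_H$ where $\widetilde{\Phi}_m=\textnormal{diag}(1/\lambda_1^m,\dots,1/\lambda_{k-1}^m)$ on $H^\perp$ has operator norm $1/\lambda_{k-1}^m$ with $(1/\lambda_{k-1}^m)/b_m=\lambda_n^m/\lambda_{k-1}^m\to0$. Hence $\|\Phi_mx\|/b_m\to\|x''\|$ uniformly on $\Sn$, and $\Phi_mx/\|\Phi_mx\|\to x''/\|x''\|$ for every $x$ with $x''\ne0$, i.e. $\Ha$-a.e.; since $0\le(\|\Phi_mx\|/b_m)^p\le1$, dominated convergence yields
$$b_m^{-p}\int_{\Sn}\varphi\!\left(\tfrac{\Phi_mx}{\|\Phi_mx\|}\right)\|\Phi_mx\|^p\,d\Ha(x)\;\longrightarrow\;\int_{\Sn}\varphi\!\left(\tfrac{x''}{\|x''\|}\right)\|x''\|^p\,d\Ha(x)\;=:\;J(\varphi),$$
and a Fubini decomposition of $\Ha$ adapted to $H^\perp\oplus H$ identifies $J(\varphi)=c_3\int_{\Sn\cap H}\varphi\,d\HH^{d-1}$ with a constant $c_3=c_3(n,d,p)>0$ (the relevant one-dimensional integral converges because $p\ge0$ and $1\le\dim H^\perp\le n-1$). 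On the other hand $(\det\Phi_m)^{-1}b_m^{p}=\bigl(\prod_{j<k}\lambda_j^m\bigr)b_m^{p-d}$, and since $\det T_m=\bigl(\prod_{j<k}1/\lambda_j^m\bigr)b_m^{d}\prod_{j=k}^{n-1}(\lambda_n^m/\lambda_j^m)$ with $\prod_{j=k}^{n-1}(\lambda_n^m/\lambda_j^m)\approx1$ (by the choice of $k$) and $\det T_m\approx b_m^{p}$ (Lemma~\ref{lemma-max|T_m^tx|^p} with $a=2n$, constants depending only on $n,p,\lambda$), we obtain $\prod_{j<k}1/\lambda_j^m\approx b_m^{p-d}$, hence $(\det\Phi_m)^{-1}b_m^{p}\approx1$. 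Writing $I_m(\varphi)=\bigl[(\det\Phi_m)^{-1}b_m^{p}\bigr]\cdot\bigl[b_m^{-p}\int_{\Sn}\varphi(\Phi_mx/\|\Phi_mx\|)\|\Phi_mx\|^p\,d\Ha\bigr]$ we conclude that there are constants $0<c_1\le c_2$ depending only on $n,p,\lambda$ with
$$c_1\,J(\varphi)\;\le\;\liminf_m I_m(\varphi)\;\le\;\limsup_m I_m(\varphi)\;\le\;c_2\,J(\varphi)\qquad(\varphi\ge0\ \text{continuous on}\ \Sn).$$

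It remains to combine the two chains of inequalities. As in the proof of Proposition~\ref{prop1-bounded-density}, $S_{p,K'_m}\to S_{p,SK_\infty}$ weakly; letting $m\to\infty$ in the pinched identity and using the bounds on $I_m$ gives, for every continuous $\varphi\ge0$ on $\Sn$,
$$\lambda^{-1}c_1c_3\int_{\Sn\cap H}\varphi\,d\HH^{d-1}\;\le\;\int_{\Sn}\varphi\,dS_{p,SK_\infty}\;\le\;\lambda c_2c_3\int_{\Sn\cap H}\varphi\,d\HH^{d-1}.$$
Since ${\rm supp}\,S_{p,SK_\infty}\subset H$ with $\dim H=d\le n-1$, this says precisely that the restriction of $S_{p,SK_\infty}$ to the Borel subsets of $\Sn\cap H$ is absolutely continuous with respect to $\HH^{d-1}$ and has density bounded between the positive constants $\lambda^{-1}c_1c_3$ and $\lambda c_2c_3$. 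Finally, $o\in{\rm int}\,L_m$ gives $o\in{\rm int}\,K_m$, hence $o\in K_\infty$, so $K_\infty\in\KKn_o$; as ${\rm supp}\,S_{p,K_\infty}\subset H$ is a proper subspace, Lemma~\ref{structure}(i) forces $o\in\partial K_\infty$, and therefore $o=So\in\partial(SK_\infty)$. Thus $K:=SK_\infty$ has all the required properties. The main work, and the only genuine departure from the proof of Proposition~\ref{prop1-bounded-density}, is the middle paragraph: pinning down $(\det\Phi_m)^{-1}b_m^{p}$ up to multiplicative constants (via the John normalization and Lemma~\ref{lemma-max|T_m^tx|^p}) and extracting the limit $J(\varphi)$; everything else is the earlier argument with the constant $1$ replaced by $\lambda^{\pm1}$.
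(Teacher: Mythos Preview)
Your proof is correct, but it follows a genuinely different route from the paper's.

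The paper works directly with $K_m$ and $T_m$ (never introducing $S_m$, $\Phi_m$, $K'_m$) and does \emph{not} compute the limit measure explicitly. Instead it proves an abstract criterion (Lemma~\ref{lemma-measure}): any finite measure $\mu$ on a sphere satisfying $\mu(V_1)\le A\,\mu({\rm cl}\,V_2)$ for all equal-radius caps $V_1,V_2$ is absolutely continuous with density bounded above and below. It then verifies this cap-comparison for $S_{p,K_\infty}\measurerestr(\Sn\cap H)$ by showing, via Lemmas~\ref{lemma-change of variables-1} and \ref{lemma-change-of-variables-3}, that $S_{p,K_m}(U_i)\approx_\lambda (\det T_m)^{-2}\int_{U_i}\|T_m^{-1}x\|^{-n-p}\,d\Ha(x)$, and observing that a rotation fixing $H^\perp$ changes $\|T_m^{-1}\cdot\|$ only by a factor bounded in terms of $\mu:=\lim_m\lambda_n^m/\lambda_k^m$.

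Your approach instead transplants the $S_m,\Phi_m$-normalization from the proof of Proposition~\ref{prop1-bounded-density} and evaluates the limit $I_m(\varphi)$ directly: dominated convergence gives the $J(\varphi)=c_3\int_{\Sn\cap H}\varphi\,d\HH^{d-1}$ factor, and Lemma~\ref{lemma-max|T_m^tx|^p} pins down $(\det\Phi_m)^{-1}b_m^{\,p}\approx 1$. This bypasses Lemma~\ref{lemma-measure} entirely and yields the density bounds in one stroke, at the cost of a bit more bookkeeping with the diagonal entries. The paper's route is more modular (the cap-comparison lemma is reusable) and avoids tracking $\prod_{j<k}\lambda_j^m$; yours is more self-contained and makes the limiting density visibly a constant multiple of $\HH^{d-1}$ up to the $\lambda$-pinching. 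Two minor remarks: your opening ``by contradiction'' is really a direct construction (you assume the hypothesis and build $K$); and the bound $(\|\Phi_mx\|/b_m)^p\le 1$ you invoke is indeed valid since $\lambda_j^m\ge\lambda_n^m$ forces $\|\Phi_mx\|\le b_m\|x\|$.
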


Before proving 
Proposition~\ref{prop-bounded-density-below}, we need a fact from elementary measure theory.

\begin{lemma}\label{lemma-measure}
If $\mu$ is a finite non-trivial Borel measure on $\Sn$ such that there exists a constant $A>0$, with the property that 
$$\mu(V_1)\leq A\mu({\rm cl}\,V_2)$$
for any two open spherical caps $V_1$, $V_2$ of the same diameter, then, $\mu$ is absolutely continuous with respect to $\Ha$ such that its density function can be chosen to be bounded and bounded away from zero.
\end{lemma}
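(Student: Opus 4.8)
The plan is to deduce absolute continuity from the quantitative doubling-type hypothesis, and then control the density by a Lebesgue differentiation argument. First I would set up notation: for $x\in\Sn$ and $\delta>0$, let $V(x,\delta)$ denote the open spherical cap of center $x$ and radius $\delta$, and recall that $\Ha(V(x,\delta))$ depends only on $\delta$ (call it $\omega(\delta)$), with $\omega(\delta)\approx \delta^{n-1}$ for small $\delta$. The hypothesis, applied with $V_1=V(x,\delta)$ and $V_2=V(y,\delta)$ for arbitrary $x,y$, says $\mu(V(x,\delta))\le A\,\mu(\cl V(y,\delta))$; since $\mu$ is finite and $\mu(\partial V(y,\delta))=0$ for all but countably many radii $\delta$, and since by monotonicity we can pass from a cap to a slightly larger closed cap, this upgrades to: there is a constant $A'$ (depending on $A$ and $n$ only) with
$$
\mu(V(x,\delta))\le A'\,\mu(V(y,2\delta))\qquad\text{for all }x,y\in\Sn,\ 0<\delta\le \tfrac12\,\mathrm{diam}\,\Sn .
$$
In particular $\mu$ is a doubling measure (take $y=x$), so the Lebesgue differentiation theorem holds for $\mu$, and every point is a point of $\mu$-density $1$ for $\mu$-a.e.\ set.

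Next I would establish two-sided bounds on $\mu(V(x,\delta))/\omega(\delta)$ uniform in $x$ and in small $\delta$. Fix a small reference radius $\delta_0$. Covering $\Sn$ by $N=N(n)$ caps of radius $\delta_0$ and using finiteness of $\mu$ together with the displayed inequality gives, for every $x$, a lower bound $\mu(V(x,\delta_0))\ge c_1>0$ and an upper bound $\mu(V(x,\delta_0))\le c_2<\infty$ with $c_1,c_2$ depending only on $A$, $n$, $\mu(\Sn)$. Then iterating the doubling inequality down from scale $\delta_0$ to scale $\delta$ in $\lceil\log_2(\delta_0/\delta)\rceil$ steps, and comparing with the corresponding behavior of $\omega$, yields for all $x$ and all $0<\delta\le\delta_0$
$$
c_3\,\omega(\delta)\ \le\ \mu(V(x,\delta))\ \le\ c_4\,\omega(\delta),
$$
with $0<c_3\le c_4<\infty$ depending only on $A,n,\mu(\Sn)$. (The clean way to see the exponent match: by the displayed inequality $\mu(V(x,2\delta))\ge (A')^{-1}\mu(V(y,\delta))$ for all $y$, and covering the cap $V(x,2\delta)$ by boundedly many caps of radius $\delta$ gives the reverse; so $\mu(V(x,2\delta))\approx \mu(V(x,\delta))$ with constants independent of $x,\delta$, i.e.\ true doubling, and iterating gives polynomial-in-$\delta$ control with the correct power $n-1$ because $\omega$ has that power.)

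From the upper bound $\mu(V(x,\delta))\le c_4\,\omega(\delta)\le c_4'\,\Ha(V(x,\delta))$ for small $\delta$ and a Vitali covering argument, $\mu\le c_4'\,\Ha$ as measures, hence $\mu\ll\Ha$; write $d\mu=\varphi\,d\Ha$. Finally, by the Lebesgue differentiation theorem for the pair $(\mu,\Ha)$ (legitimate since both are Borel measures on $\Sn$ and $\Ha$ is doubling),
$$
\varphi(x)=\lim_{\delta\to0^+}\frac{\mu(V(x,\delta))}{\Ha(V(x,\delta))}\qquad\text{for }\Ha\text{-a.e.\ }x,
$$
and the two-sided bounds give $c_5\le \varphi(x)\le c_6$ at every such $x$, with $0<c_5\le c_6<\infty$. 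Redefining $\varphi$ on the $\Ha$-null exceptional set to equal, say, $c_5$ produces a representative of the density that is bounded and bounded away from zero, as claimed. The main obstacle is purely bookkeeping: passing from "same diameter" open caps in the hypothesis to a genuine doubling inequality $\mu(V(x,2\delta))\approx\mu(V(x,\delta))$ with constants independent of the base point, handling the measure of the boundary spheres, and making sure the iteration across scales produces exactly the exponent $n-1$ matching $\omega(\delta)$; once that is in place, absolute continuity and the two-sided density bound follow from standard covering and differentiation theorems. \proofbox
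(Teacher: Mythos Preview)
There is a genuine gap in your step~3. You assert that ``iterating the doubling inequality down from scale $\delta_0$ to scale $\delta$'' yields the two-sided bound
\[
c_3\,\omega(\delta)\le \mu(V(x,\delta))\le c_4\,\omega(\delta),
\]
and you describe this as ``purely bookkeeping.'' It is not. Doubling of $\mu$ gives at best $\mu(V(x,2\delta))\le D\,\mu(V(x,\delta))$ with some constant $D$ depending on $A$ and $n$, and iterating over $k\approx\log_2(\delta_0/\delta)$ steps produces a bound of the form $\mu(V(x,\delta))\gtrsim \delta^{\log_2 D}$, not $\delta^{n-1}$. There is no reason for the doubling constant of $\mu$ to match that of $\Ha$, and any mismatch compounds multiplicatively across scales; you cannot conclude that the exponent is $n-1$ merely ``because $\omega$ has that power.'' In particular, the crucial upper bound $\mu(V(x,\delta))\le c_4\,\omega(\delta)$ --- which is what you need for absolute continuity --- is not obtained by iterating doubling at all (doubling goes the wrong way for that direction).

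The fix is to abandon iteration and instead run your covering argument \emph{at every scale}, not just at the reference scale $\delta_0$. This is exactly what the paper does: for each $\varepsilon>0$, take a maximal $2\varepsilon$-separated set $\{x_1,\dots,x_N\}$, so the closed $\varepsilon$-caps are disjoint while the $4\varepsilon$-caps cover $\Sn$. Summing and using the doubling of $\Ha$ (not of $\mu$) gives $\sum_j \mu(\cl V_j)\le \mu(\Sn)\le B\sum_j \Ha(V_j)$ with $B=C_0^2\,\mu(\Sn)/\Ha(\Sn)$ independent of $\varepsilon$; by pigeonhole some $V_t$ satisfies $\mu(\cl V_t)\le B\,\Ha(V_t)$, and then the hypothesis transfers this to \emph{every} open cap of radius $\varepsilon$: $\mu(V)\le A\,\mu(\cl V_t)\le AB\,\Ha(V)$. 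Since $\varepsilon$ is arbitrary, Vitali and Lebesgue differentiation give $d\mu=\varphi\,d\Ha$ with $\varphi\le AB$ a.e. For the lower bound on $\varphi$, the paper argues by contradiction (if ${\rm ess\,inf}\,\varphi=0$, pick a Lebesgue point with tiny density, use the hypothesis to force the density to be tiny at \emph{every} Lebesgue point, hence $\mu\equiv 0$); alternatively you could run the pigeonhole symmetrically to find, at each scale, a cap with $\mu(\cl V)\ge b\,\Ha(V)$ and transfer. Either way, the key point you are missing is that the comparison with $\Ha$ must be made scale by scale, using that the pigeonhole constant $B$ is scale-free; doubling and iteration do not do this for you.
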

\begin{proof}
For $x\in \Sn$, $\varepsilon>0$, set $U(x,\varepsilon):=\{y\in\Sn:\|x-y\|<\varepsilon\}$. It is well known that there exists a constant $C_0>0$ , that depends only on $n$ (actually $C_0=2^{n-1}$ works), such that 
\begin{equation}
\label{doubling}
\Ha(U(x,2\varepsilon))\leq C_0 \Ha(x,\varepsilon).
\end{equation}
For $\varepsilon>0$, fix a maximal set $\{x_1,\dots,x_N\}\subset \Sn$, with the
property that the spherical caps $U(x_1,2\varepsilon),\dots,U(x_N,2\varepsilon)$ are disjoint, and hence
$$\bigcup_{j=1}^NU(x_j,4\varepsilon)=\Sn.$$
Set $V_j:=U(x_j,\varepsilon)$, $j=1,\dots,N$. Then, ${\rm cl}\,V_1,\dots,{\rm cl}\,V_N$ are  disjoint, and we have
\begin{eqnarray*}
\sum_{j=1}^N\mu({\rm cl}\,V_j)\leq \mu(\Sn)&=&\frac{\mu(\Sn)}{\Ha(\Sn)}\Ha(\Sn) \\
&\leq& \frac{\mu(\Sn)}{\Ha(\Sn)}\sum_{j=1}^N\Ha(U(x_j,4\varepsilon))\\
&\leq&C_0^2\frac{\mu(\Sn)}{\Ha(\Sn)}\sum_{j=1}^N\Ha(V_j)=B\sum_{j=1}^N\Ha(V_j)
\end{eqnarray*}
for $B=C_0^2\frac{\mu(\Sn)}{\Ha(\Sn)}$.
Therefore, there exists $t\in\{1,\dots,N\}$, such that $$\mu({\rm cl}\,V_t)\leq B\Ha(V_t).$$
It follows by our assumption that for any open spherical cap $V$ of $\Sn$, of radius $\varepsilon$, it holds $$\mu(V)\leq A\mu({\rm cl}\,V_t)\leq A\cdot B\,\Ha(V).$$
Since $\varepsilon$ is arbitrary, it follows from 
Vitali's Covering Lemma, the regularity of Hausdorff measure and the doubling property \eqref{doubling}
  that $\mu$ is absolutely continuous with respect to $\Ha$ having bounded density.

Next, we suppose that the essential infimum of the density of $\mu$ is  zero, and seek a contradiction. Then, for each $\eta>0$, there exists a point $x_\eta\in\Sn$, such that the limit $\lim_{\delta\to 0^+}\mu(U(x_\eta,\delta)/\Ha(U(x_\eta,\delta))$ exists and is less than $\eta/(2A)$. It follows that if $\delta $ is sufficiently small, then 
$$\mu(U(x_\eta,\delta)<\frac{\eta}{A}\cdot \Ha(U(x_\eta,\delta)).$$
Consequently, using again the assumption of Lemma~\ref{lemma-measure} and the absolute continuity of $\mu$ (established previously), we conclude that for any $y\in\Sn$, it holds
$$\limsup_{\delta\to 0^+}\frac{\mu(U(y,\delta))}{\Ha(U(y,\delta))}<\eta.$$  Since $\eta $ is arbitrary, we arrive at
$$\limsup_{\delta\to 0^+}\frac{\mu(U(y,\delta))}{\Ha(U(y,\delta))}=0,\qquad \forall y\in\Sn.$$
As $\mu$ is absolute continuous, we deduce that $\mu\equiv 0$, which is a contradiction. This completes the proof of the Lemma~~\ref{lemma-measure}.
\end{proof}

\begin{proof}[Proof of Propostion \ref{prop-bounded-density-below}]
We suppose that the assertion of Proposition~\ref{prop-bounded-density-below} does not hold, and seek a contradiction. Then,
using Lemma~\ref{volumeLower}, there exists a sequence $L_m$ of convex bodies that contain the origin in their interior and whose surface area measure is absolutely continuous with respect to $\Ha$, and satisfy \eqref{eq-main} and
$$
\lim_{m\to\infty}\sup_{x\in\Sn} h_{L_m}(x)= \infty.
$$
For each $m$, there exists $T_m\in GL(n)$ and a convex body $K_m$ whose maximal volume ellipsoid is a translate of $B^n$, such that $L_m=T_mK_m$. As in the proof of Theorem \ref{thm1-bounded-density}, we may assume that $\{K_m\}$ converges to some convex body $K_\infty$ and that $T_m=\textnormal{diag}{(\lambda_1^m)^{-1},\dots,(\lambda_n^m)}^{-1})$, where the numbers $\lambda_j^m$ satisfy the assumptions of Lemma \ref{lemma-concentration}. Then, by Lemma \ref{lemma-concentration}, $S_{p,K_\infty}$ is concentrated in 
$H=\textnormal{span}\{e_k,e_{k+1},\dots,e_n\}$,  where
$$k=\min\Big\{j\in\{1,\dots,n\}:\lim_m\frac{\lambda_n^m}{\lambda_j^m}>0\Big\}.$$
Therefore, by Lemma \ref{lemma-measure}, in order to prove Proposition \ref{prop-bounded-density-below} (with $K=K_\infty$), it suffices to show that there is a constant $A>0$, such that if $V_1$, $V_2$ are open spherical caps in $\Sn\cap H$, of the same diameter, then
$$S_{p,K_\infty}(V_1)\leq AS_{p,K_\infty}({\rm cl}\,V_2).$$
Equivalently, since $S_{p,K_\infty}$ is concentrated in $H$, it suffices to prove that if $U_1,U_2$ are open spherical caps in $\Sn$, of the same diameter, whose centers are contained in $H$, then
$$S_{p,K_\infty}(U_1)\leq AS_{p,K_\infty}({\rm cl}\,U_2).$$
Notice that, since $U_1$ is open and ${\rm cl}\,U_2$ is closed, it holds
$$S_{p,K_\infty}(U_1)\leq\liminf_mS_{p,K_m}(U_1)\qquad\textnormal{and}\qquad S_{p,K_\infty}({\rm cl}\,U_2)\geq \limsup_m S_{p,K_m}({\rm cl}\,U_2)=\limsup_mS_{p,K_m}(U_2).$$
Therefore, it suffices to prove that there is a constant $A>0$, such that  for all $m$ and for all spherical caps $U_1$, $U_2$ in $\Sn$ of the same diameter, whose centers are contained in $H$, it holds
\begin{equation}\label{eq-bounded-density-desired}S_{p,K_m}(U_1)\leq A S_{p,K_m}(U_2).\end{equation}
Let $U_1$, $U_2$ be as above. Using Lemma \ref{lemma-change of variables-1}, \eqref{eq-main} and Lemma \ref{lemma-change-of-variables-3}, we see that
\begin{eqnarray}\label{eq-last-bounded-density}
S_{p,K_m}(U_i)&=&\int_{U_i} h_{K_m}^{1-p}(x)f_{K_m}(x)\, d\Ha(x)\nonumber\\
&=&\int_{\Sn}{\bf 1}_{U_i}(x)h_{K_m}^{1-p}(x)f_{K_m}(x)\, d\Ha(x)\nonumber\\
&=&\det T_m^{-1}\int_{\Sn}{\bf 1}_{U_i}(T_mx/\|T_mx\|)h_{T_mK_m}^{1-p}(x)f_{T_mK_m}(x)\|T_mx\|^p\, d\Ha(x)\nonumber\\
&\approx_\lambda&\det T_m^{-1}\int_{\Sn}{\bf 1}_{U_i}(T_mx/\|T_mx\|)\|T_mx\|^p\, d\Ha(x)\nonumber\\
&=&\det T_m^{-2}\int_{\Sn}{\bf 1}_{U_i}(x)\|T_m^{-1}x\|^{-n-p}\, d\Ha(x)\nonumber\\
&=&\det T_m^{-2}\int_{U_i}\|T_m^{-1}x\|^{-n-p}\, d\Ha(x),\ \ i=1,2.
\end{eqnarray}
Notice that there exists $O\in O(n)$, that fixes the elements of $H^\perp$, such that $U_2=OU_1$.  Moreover, since $\lim_m\lambda_n^m/\lambda_k^n>0$, there exists $0<\mu\leq 1$, such that 
$$\lambda_n^m\geq \mu\lambda_k^m\geq \mu \lambda_{k+1}^m\geq\dots\geq \mu\lambda_n^m,\qquad \forall m\in\mathbb{N}.$$ Thus, for $x=(x_1,\dots,x_n)=x_1e_1+\dots +x_ne_n\in\Sn$, it holds
\begin{eqnarray*}
\|T_m^{-1}Ox\|^2&=&\|(\lambda_1^mx_1,\dots,\lambda_{k-1}^mx_{k-1})\|^2+\|O(0,\dots,0,\lambda_k^mx_k+\dots+\lambda_{n}^mx_{n})\|^2\\
&\geq&\|(\lambda_1^mx_1,\dots,\lambda_{k-1}^mx_{k-1})\|^2+\|O(0,\dots,0,\lambda_n^mx_k+\dots+\lambda_{n}^mx_n)\|^2\\
&=&(\lambda_1^mx_1)^2+\dots+(\lambda_{k-1}x_{k-1})^2+(\lambda_n^mx_k)^2+\dots+(\lambda_n^mx_n)^2\\
&\geq&\mu^2((\lambda_1^mx_1)^2+\dots+(\lambda_{n}x_{n})^2)=\mu^2\|T_m^{-1}x\|^2,\qquad\forall m\in\mathbb{N}.
\end{eqnarray*}
It follows that for each positive integer $m$, we have
\begin{eqnarray*}
\int_{U_2}\|T_m^{-1}x\|^{-n-p}\, d\Ha(x)&=&\int_{OU_1}\|T_m^{-1}x\|^{-n-p}\, d\Ha(x)\\
&=&\int_{U_1}\|T_m^{-1}Ox\|^{-n-p}\, d\Ha(x)\leq \mu^{-n-p}\int_{U_1}\|T_m^{-1}x\|^{-n-p}\, d\Ha(x).\end{eqnarray*}
This, together with \eqref{eq-last-bounded-density} yields \eqref{eq-bounded-density-desired} (with the constant $A$ depending on $\lambda$ and $\mu$), ending our proof.
\end{proof}

\section{Severe non-uniqueness for  $-n<p<0$ and the proof of Theorem~\ref{thm-non-uniqueness}}
\label{secNonUnique}

For the proof of Theorem~\ref{thm-non-uniqueness}, we will need the following.
\begin{lemma}\label{l-non-uniqueness}
Let $L\in{\cal K}_o^n$, such that $o\in {\rm int}\, L$ and $p<0$. Then, for $x\in L$,
$$\int_{\Sn}h_{L-x}^p(u)\,dS_{p,L}(u)=\inf_{y\in L}\int_{\Sn}h_{L-y}^p(u)\, dS_{p,L}(u),$$if and only if $x=o$.
\end{lemma}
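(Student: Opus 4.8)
The plan is to introduce
\[
\Phi(x):=\int_{\Sn}h_{L-x}^p(u)\,dS_{p,L}(u),\qquad x\in L,
\]
and to prove that $\Phi$ is (strictly) convex with the origin as its unique critical point, so that $o$ is its unique minimizer over $L$; this is exactly the assertion. First I would record the elementary set-up: since $o\in\inter L$, the support function $h_L$ is bounded below by a positive constant on $\Sn$, and as $1-p>0$ this makes $dS_{p,L}=h_L^{1-p}\,dS_L$ a \emph{finite} Borel measure; moreover for $x\in L$ one has $h_{L-x}(u)=h_L(u)-\langle x,u\rangle\ge 0$ on $\Sn$, with strict positivity when $x\in\inter L$. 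Hence $\Phi\colon L\to[0,\infty]$ is well defined and finite on $\inter L$.

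\textbf{Step 1 (convexity).} For each fixed $u$, the map $x\mapsto h_{L-x}(u)^p=(h_L(u)-\langle x,u\rangle)^p$ is the composition of the affine map $x\mapsto h_L(u)-\langle x,u\rangle$ with the convex decreasing function $t\mapsto t^p$ on $(0,\infty)$ (recall $p<0$), extended by $+\infty$ at $0$; this composition is convex on $L$ with values in $[0,\infty]$. Integrating, $\Phi$ is convex on $L$. For strict convexity on $\inter L$ I would argue that along a segment $[x_0,x_1]\subset\inter L$ the integrand is strictly convex for every $u$ with $\langle x_1-x_0,u\rangle\neq 0$; since $L$ is a convex body, $S_L$ — hence $S_{p,L}$ — is not concentrated on any great subsphere (Minkowski's condition even rules out a closed hemisphere), so that set of $u$ has positive $S_{p,L}$-measure and $\Phi$ is strictly convex along $[x_0,x_1]$. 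Equivalently, differentiating twice under the integral gives $\nabla^2\Phi(x)=p(p-1)\int_{\Sn}(u\otimes u)\,h_{L-x}^{p-2}(u)\,dS_{p,L}(u)$, which is positive definite on $\inter L$ because $p(p-1)>0$ and $S_{p,L}$ is not supported in a hyperplane through $o$.

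\textbf{Step 2 (the origin is critical).} Differentiating under the integral near $x=o$ — legitimate since there $h_{L-x}$ stays bounded away from $0$ while $\|u\|\le 1$ — yields
\[
\nabla\Phi(o)=-p\int_{\Sn}u\,h_L^{p-1}(u)\,dS_{p,L}(u)=-p\int_{\Sn}u\,dS_L(u)=o,
\]
the last equality being the standard fact that the surface area measure of a convex body is centred (apply the divergence theorem to $x\mapsto\langle w,\nu_L(x)\rangle$ for arbitrary $w$). \textbf{Step 3 (conclusion).} If $x\in\inter L$ with $x\neq o$, strict convexity together with $\nabla\Phi(o)=o$ gives $\Phi(x)>\Phi(o)+\langle\nabla\Phi(o),x\rangle=\Phi(o)$. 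If $x\in\partial L$, the half-open segment $[o,x)$ lies in $\inter L$, so for $t\in(0,1)$ we have $\Phi(tx)>\Phi(o)$ by the interior case; combining with convexity of $\Phi$ on $L$, namely $\Phi(tx)\le(1-t)\Phi(o)+t\Phi(x)$, we get $t\Phi(x)\ge\Phi(tx)-(1-t)\Phi(o)>t\Phi(o)$, hence $\Phi(x)>\Phi(o)$ (trivially so if $\Phi(x)=\infty$). Therefore $\inf_{y\in L}\Phi(y)=\Phi(o)$ and the infimum is attained only at $x=o$, which is the claim.

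I do not expect a deep obstacle here: the content is a convexity argument. The two places deserving care are the measure-theoretic manipulations — verifying that $S_{p,L}$ is finite and that one may differentiate under the integral near $o$, both consequences of $h_L>0$ on $\Sn$ coming from $o\in\inter L$ — and the use of the non-degeneracy of $S_L$ (its support not lying in a hyperplane through the origin) to upgrade convexity of $\Phi$ to \emph{strict} convexity on $\inter L$, which is the only step that genuinely uses that $L$ is full-dimensional. The extension from $\inter L$ to $\partial L$ is then a soft convexity estimate.
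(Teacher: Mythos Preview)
Your proof is correct and follows essentially the same route as the paper: define the integral functional, observe that $t\mapsto t^p$ is convex on $(0,\infty)$ for $p<0$ so the functional is convex, compute the gradient at $o$ using $dS_{p,L}=h_L^{1-p}\,dS_L$ to reduce to $\int_{\Sn}u\,dS_L(u)=o$, and conclude by strict convexity. You are in fact more careful than the paper on two points it glosses over: the justification of \emph{strict} convexity via the fact that $S_L$ (hence $S_{p,L}$) is not supported in a hyperplane through the origin, and the treatment of boundary points $x\in\partial L$ where $h_{L-x}$ may vanish.
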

\begin{proof}
Since $h_{L-x}(y)=h_L(y)-\langle y,x\rangle$, the function $L\ni x\mapsto h_{L-x}^p$ is strictly convex, the function 
$$
F:L\ni x\mapsto \int_{\Sn}h_{L-x}^p(u)\, dS_{p,L}(u)
$$ 
is also strictly convex. For $z\in\R^n$, we have
$$
\langle\nabla F(x),z\rangle=-p\int_{\Sn} \langle z,u\rangle h_{L-x}(u)^{p-1}h_L(u)^{1-p}\, dS_{L}(u),
$$
and hence $\nabla F(o)=o$ follows from the fact that the barycentre of $S_L$ is at the origin. We deduce from the strict convexity of $F$ that $x=o$ is the unique minimum point of $F$. 
\end{proof}
\begin{proof}[Proof of Theorem \ref{thm-non-uniqueness}] Define the functional
$$J_{p,\mu}(M):=\inf_{x\in M}\frac{1}{n}\int_{\Sn} h_{M-x}^p(u)\, d\mu(u),\qquad M\in{\cal K}_o^n.$$
If ${\cal K}={\cal K}_s^n$ and $\mu$ is even, it follows easily from Lemma \ref{l-non-uniqueness} that
$$J_{p,\mu}(M)=\frac{1}{n}\int_{\Sn} h_{M}^p(u)\, d\mu(u),\qquad M\in{\cal K}_s^n.$$
If, in addition, $\mu$ is absolutely continuous with respect to ${\cal H}^{n-1}$ with $L^\infty$ density, consider the maximization problem
$$\textnormal{maximize}\ \ J_{p,\mu}(M)V(M)^{-p/n},\qquad\textnormal{subject to }M\in{\cal K}.$$
It is known (see \cite{BBCY19} and \cite{JLW15} ) that if $\mu$ is as above, then the problem indeed admits a maximizer $M_0$ in ${\cal K}$. Furthermore, $M_0$ can be chosen so that 
\begin{equation}\label{eq-non-uniqueness-variational}S_{p,M_0}=\mu.\end{equation}
For $L,M\in{\cal K}$, define also 
$$J_p(M,L):=J_{p,S_{p,M}}(L).$$
Notice that if $o\in {\rm int}\, M$, then it follows that 
\begin{equation}\label{eq-J_p(M,M)=V(M)} 
    J_p(M,M)=V(M).
\end{equation}
This is trivial to verify, if ${\cal K}={\cal K}_s^n$ and follows immediately from Lemma \ref{l-non-uniqueness}, if ${\cal K}={\cal K}_o^n$.

{\bf Claim 1.} Let $K\in {\cal K}_+$, with $o\in{\rm int}\,K$ and $p\in (-n,0)$. If for some $L\in{\cal K}$, it holds
\begin{equation}\label{eq-not} J_p(K,L)>V(K)^{(n-p)/n}V(L)^{p/n},\end{equation}
then $K\in{\cal A}_p$.
\begin{proof}
We show first that if \eqref{eq-not} holds for some $L\in{\cal K}$, then there is a convex body $L_p\in{\cal K}$, such that $L_p\neq K$ and $S_{p,L_p}=S_{p,K}$. Indeed, \eqref{eq-not} and \eqref{eq-J_p(M,M)=V(M)}  show that 
$$\sup_{L\in {\cal K}}J_p(K,L)V(L)^{-p/n}>V(K)^{(n-p)/n}=J_p(K,K)V(K)^{-p/n},$$
while \eqref{eq-J_p(M,M)=V(M)} shows that there exists $L_p\in{\cal K}$, such that $S_{p,L_p}=S_{p,K}$ and $J_p(K,L_p)V(L_p)^{-p/n}=\sup_{L\in {\cal K}}J_p(K,L)V(L)^{-p/n}$. The latter shows that $L_p\neq K$ and our assertion is proved. 

Therefore, to prove our claim, it suffices to show that, given $q\in(-n,p)$, there exists $L'\in {\cal K}$ such that  
\begin{equation}\label{eq-not-q}J_q(K,L')>V(K)^{(n-q)/n}V(L')^{q/n}.\end{equation}
To this end, fix $L\in{\cal K}$, such that $K$ and $L$ satisfy \eqref{eq-not}. Then, there exists $x\in {\rm int}\,L$, such that $\frac{1}{n}\int_{\Sn}h^q_{L-x}(u)\, dS_{q,K}(u)=J_q(K,L)$.
Setting $L':=L-x$ and using H\"older's inequality, we find
\begin{eqnarray*}
V(K)^{(n-p)/n}V(L')^{p/n}&=&  V(K)^{(n-p)/n}V(L)^{p/n}<J_p(K,L)=J_p(K,L')\\
&\leq&\frac{1}{n}\int_{\Sn}h_{L'}^p(u)\, dS_{p,K}(u)=\frac{1}{n}\int_{\Sn}\left(\frac{h_K(u)}{h_{L'}(u)}\right)^{-p}h_K(u)\, dS_K(u)\\
&\leq&\frac{1}{n}\left(\int_{\Sn}\left(\frac{h_K(u)}{h_{L'}(u)}\right)^{-q}h_K(u)\, dS_K\right)^{p/q}\left(\int_{\Sn}h_K(u)\, dS_K\right)^{(q-p)/q}\\
&=&J_q(K,L')^{p/q}V(K)^{(q-p)/q}.
\end{eqnarray*}
Readily, this gives \eqref{eq-not-q} and our claim is proved.
\end{proof}

To finish with the proof of Theorem \ref{thm-non-uniqueness}, for $Q\in{\cal K}$, define  
$$I_p(Q):=\sup_{L\in {\cal K}}J_p(Q,L)V(Q)^{(p-n)/n}V(L)^{-p/n}.$$
It follows by Claim 1 that it suffices to prove that the set of convex bodies $Q\in{\cal K}_+\cap {\cal K}_{(o)}$ that satisfy $I_p(Q)>1$ is dense in ${\cal K}$, in the Hausdorff metric. In fact, since ${\cal K}_+\cap {\cal K}_{(o)}$ is dense in ${\cal K}_{(o)}$, it suffices to prove that the set of convex bodies $Q\in{\cal K}_{(o)}$ that satisfy $I_p(Q)>1$ is dense in ${\cal K}$. To see this, simply notice that any polytope $P\in{\cal K}$ with $o\in{\rm int}\,P$, which has at least one pair of parallel facets (this holds automatically if ${\cal K}={\cal K}_s^n$), satisfies $I_p(P)=\infty>1$. We conclude the proof by the fact that the set of polytopes in ${\cal K}$ that contain the origin in their interior and have at least one pair of parallel facets is dense in ${\cal K}$, in the Hausdorff metric.
\end{proof}

\noindent{\bf Acknowledgement} K\'aroly B\"or\"oczky is supported by NKFI grant K 132002. We are grateful for the extremely useful discussions with Alessio Figalli and Pengfei Guan, and for the comments of the referee.

\noindent K\'aroly J. B\"or\"oczky, HUN-REN Alfr\'ed R\'enyi Institute of Mathematics, Budapest, Hungary\\
boroczky.karoly.j@renyi.hu\\

\noindent Christos Saroglou, Department of Mathematics, University of Ioannina, Greece\\
 csaroglou@uoi.gr

\end{document}